\documentclass[11pt]{amsart}

\usepackage{cite}
\usepackage{amsmath,amssymb,amsthm,mathtools,bm}
\usepackage{graphicx}
\usepackage{booktabs}
\usepackage{algorithm}
\usepackage{algpseudocode}
\usepackage{flafter}
\usepackage{placeins}
\usepackage{url}
\usepackage[hidelinks]{hyperref}
\usepackage[nameinlink,capitalize,noabbrev]{cleveref}
\usepackage{microtype}

\newtheorem{theorem}{Theorem}
\newtheorem{lemma}{Lemma}
\newtheorem{proposition}{Proposition}
\newtheorem{corollary}{Corollary}
\newtheorem{assumption}{Assumption}
\theoremstyle{definition}
\newtheorem{definition}{Definition}
\newtheorem{remark}{Remark}

\crefname{assumption}{Assumption}{Assumptions}
\Crefname{assumption}{Assumption}{Assumptions}

\newcommand{\R}{\mathbb R}
\newcommand{\E}{\mathbb E}
\newcommand{\Pp}{\mathbb P}
\newcommand{\cN}{\mathcal N}
\newcommand{\cM}{\mathcal M}
\newcommand{\cZ}{\mathcal Z}
\newcommand{\cL}{\mathcal L}
\newcommand{\cI}{\mathcal I}
\newcommand{\cK}{\mathcal K}
\newcommand{\cG}{\mathcal G}

\newcommand{\tr}{\operatorname{tr}}
\newcommand{\rank}{\operatorname{rank}}
\newcommand{\diag}{\operatorname{diag}}
\newcommand{\proj}{\operatorname{proj}}
\newcommand{\opnorm}[1]{\left\lVert #1\right\rVert_{\mathrm{op}}}
\newcommand{\fnorm}[1]{\left\lVert #1\right\rVert_{\mathrm F}}
\newcommand{\norm}[1]{\left\lVert #1\right\rVert}
\newcommand{\ip}[2]{\left\langle #1,#2\right\rangle}
\newcommand{\ball}{\mathbb B}

\newcommand{\Reg}{\mathfrak R}

\title[Controller Identifiability and Local Minimax Regret]{Controller Identifiability: The Boundary Between Logarithmic and Square-Root Local Minimax Regret in Structured Adaptive LQR}

\author{Zhaobo Liu}
\address{Institute for Advanced Study, Shenzhen University, 3883 Baishi Road, Nanshan District, Shenzhen 518060, China}
\email{liuzhaobo@szu.edu.cn}
\thanks{This work was supported in part by the National Natural Science Foundation of China under Grant 12401585, the Guangdong Basic and Applied Basic Research Foundation under Grant 2024A1515011542, and the General Program of Shenzhen Natural Science Foundation under Grant JCYJ20250604181037012.}
\date{}
\hypersetup{pdftitle={Controller Identifiability: The Boundary Between Logarithmic and Square-Root Local Minimax Regret in Structured Adaptive LQR},pdfauthor={Zhaobo Liu}}

\begin{document}
\begin{abstract}
We study when logarithmic regret is attainable in structured adaptive linear--quadratic regulation. The state and input matrices depend affinely on a common unknown parameter. We consider neighborhoods of a known nominal model that is stabilizable and observable through the output associated with the state cost, with radii proportional to the inverse fourth root of the horizon. The key condition is controller identifiability, which requires the optimal gain derivative to vanish in every parameter direction that leaves the closed-loop dynamics unchanged under nominal optimal feedback. Under suitable regularity conditions on the noise density, this condition at the nominal parameter yields logarithmic local minimax regret when the gain derivative there is nonzero. Failure of the condition yields square-root local minimax regret. If the nominal gain derivative vanishes, local minimax regret remains bounded. We prove that controller identifiability is equivalent to the optimal gain remaining unchanged under sufficiently small invisible perturbations of the nominal model. Motivated by this invariance, we construct a certainty-equivalent policy that estimates only the visible parameter component and attains the logarithmic upper bound under controller identifiability. The policy requires neither the horizon nor the noise law, and its guarantee holds for independent, identically distributed noise with zero mean and finite positive definite covariance.
\end{abstract}

\keywords{
Adaptive control, certainty equivalence, controller identifiability, linear--quadratic regulator, local minimax regret.
}

\maketitle

\section{Introduction}\label{sec:introduction}
Adaptive linear--quadratic regulation (LQR) seeks nearly optimal control of systems with unknown dynamics using information gathered from the controlled trajectory. Classical work established stability and asymptotic performance guarantees for self-tuning regulation and adaptive LQ control \cite{lai1987adaptive,campi1998adaptive}. Finite-horizon regret measures the expected cumulative cost incurred beyond that of an optimal controller that knows the true dynamics \cite{abbasi2011regret,simchowitz2020naive}. Each control input affects both this cost and the information available for future decisions. Inputs that improve estimation may increase the immediate cost, while feedback chosen for control performance may leave some model differences unobserved.

Finite-time regret analyses include robust adaptive control \cite{dean2018regret} and posterior sampling \cite{abeille2018improved,ouyang2020posterior}. For the dependence on the horizon, optimistic control achieves square-root regret up to logarithmic factors under suitable stability and initialization assumptions \cite{abbasi2011regret,cohen2019learning}. Certainty equivalence applies the optimal controller for the estimated model. Perturbation analysis relates its performance to model error \cite{mania2019certainty}, and suitable exploration yields the same square-root dependence \cite{simchowitz2020naive,jedra2022minimal}. Matching lower bounds establish this dependence for the general class with both dynamics matrices unknown \cite{simchowitz2020naive}.

When one dynamics matrix is known, expected regret bounds quadratic in the logarithm of the horizon have been established \cite{cassel2020logarithmic}. With a known stabilizing controller, logarithmic expected regret is attainable when the input matrix is known. When only the state matrix is known, the guarantee additionally requires the optimal gain to have full row rank \cite[Theorems~2--3]{jedra2022minimal}. With both matrices uncertain, a condition relating closed-loop estimation error to error in the optimal gain yields an asymptotic bound quadratic in the logarithm for regret measured against an oracle trajectory \cite{faradonbeh2020adaptive}. With a supplied dynamics representation and stabilizing controller, excitation conditions under the initial and optimal controllers yield an expected regret bound quadratic in the logarithm of the horizon. Sufficiently small representation error adds a term linear in the horizon and quadratic in the representation error \cite[Theorem~3]{lee2024nonasymptotic}. These results identify settings in which additional structure permits faster regret rates. The remaining question is which structural property determines whether logarithmic regret is attainable.

Related regret phase transitions have been studied in a memoryless linear quadratic control model \cite{ziemann2021phase}, while information-based lower bounds cover smooth joint uncertainty and partially observed systems \cite{ziemann2025lower}. For the question studied here, the relevant obstruction is the failure of optimal feedback to reveal model changes that affect the optimal gain. With the state-transition matrix known and the input matrix affinely uncertain, changes invisible under optimal feedback can nevertheless change the optimal gain, forcing a local square-root regret lower bound \cite{ziemann2021uninformative}. That work explicitly asks whether the absence of this obstruction is sufficient for logarithmic regret.

We study this question in structured LQR models whose state-transition and input matrices depend affinely on a common unknown parameter. If systems indistinguishable under nominal optimal feedback share the nominal optimal controller, distinguishing them is unnecessary for optimal control. The relevant distinction is therefore between identifying the entire model and determining its optimal feedback, as recognized in geometric analyses of adaptive LQ control \cite{polderman1986structure,polderman1986necessity}. Data informativity theory likewise studies when systems consistent with noiseless observations share an optimal LQR controller \cite{vanwaarde2020data}. We formulate a criterion, called controller identifiability, to determine whether the information missing under optimal feedback matters for control. In our affine family, we prove that this criterion holds exactly when all sufficiently nearby systems indistinguishable from the nominal model under its optimal feedback share its optimal controller.

A known stabilizable nominal model provides an initial feedback controller that stabilizes all sufficiently nearby systems. We ask how much online adaptation can improve upon this nominal controller when the true dynamics are unknown. Our main classification considers neighborhoods whose radii are proportional to the inverse fourth root of the horizon. At this scale, smooth gain variation and quadratic control loss give a square-root regret upper bound for nominal feedback. We characterize the smallest worst-case expected regret over all causal policies, including randomized ones, and determine when logarithmic regret is attainable. When controller identifiability holds at the nominal model and its state-transition matrix is nonsingular, we also establish logarithmic regret on sufficiently small fixed neighborhoods.

Our main contributions are as follows.

1) We characterize the local minimax regret rate through controller identifiability. Under the stated system and noise assumptions, controller identifiability is necessary and sufficient for logarithmic regret when the nominal gain derivative is nonzero. Its failure yields the square-root rate. A zero nominal gain derivative permits bounded regret. The lower bounds require regularity of the noise location family and apply to all causal policies. For a known state-transition matrix, this resolves the local attainability question in \cite{ziemann2021uninformative} under assumptions common to that work and ours.

2) We construct a certainty-equivalent algorithm that estimates closed-loop dynamics and uses the known affine structure to estimate the parameter component visible under nominal optimal feedback. Under controller identifiability, it attains uniform logarithmic expected regret. It requires neither the horizon nor the noise law, and its guarantee holds for independent, identically distributed noise with zero mean and finite positive definite covariance.

3) We characterize controller identifiability through an equivalent local bound relating changes in the optimal gain to changes in closed-loop dynamics under nominal optimal feedback. We also bound the gain error caused by omitting the invisible parameter component when both visible and invisible components are present. These bounds justify estimating only the visible component in the algorithm.

Section~\ref{sec:problem} defines the model, local experiment, and controller identifiability. Section~\ref{sec:main-results} states the phase classification, establishes its structural characterization, and gives extensions to other uncertainty radii. Section~\ref{sec:FCE} gives the algorithm and its regret bound. Section~\ref{sec:lower} proves the matching lower bounds. Sections~\ref{sec:illustrations} and \ref{sec:conclusion} present illustrations and discuss the scope of the results.

\emph{Notation.} We use \(\norm{\cdot}\) for the Euclidean vector norm or matrix operator norm (also \(\opnorm{\cdot}\)), \(\fnorm{\cdot}\) for the Frobenius norm, and \(\norm{z}_S^2\triangleq z^\top Sz\) for \(S\succ0\). Vector and matrix spaces carry the Euclidean and Frobenius inner products, respectively, with \(\ip{X}{Y}_F\triangleq \tr(X^\top Y)\). Adjoints and Moore--Penrose pseudoinverses \(L^\dagger\) use these inner products. The symbols \(\ker L\), \(M^\perp\), \(L|_M\), and \(\proj_C\) denote the nullspace, orthogonal complement, restriction to a subspace \(M\), and orthogonal projection onto a closed convex set \(C\). The closed unit ball in parameter space is \(\ball_2\). We use \(I_m\) (or \(I\)) for the identity matrix, \(X^*\) for conjugate transpose, \(\lambda_{\min},\lambda_{\max}\) for extreme eigenvalues, and \(\sigma_{\min}\) for the smallest singular value. Probability and expectation are \(\Pp,\E\), and \(\sigma(\cdot)\) denotes the generated sigma-field. For nonnegative quantities \(a,b\), with \(b>0\), we write \(a=O(b)\) if \(a\le Cb\) eventually in the stated limit, and \(a=o(b)\) if \(a/b\to0\). We write \(a=\Theta(b)\), or \(a\asymp b\), if \(cb\le a\le Cb\) eventually, with constants \(0<c\le C<\infty\) independent of the limiting variable. Generic constants \(c,C\) may vary between occurrences. For locally integrable \(f:\R^m\to\R\) and \(g:\R^m\to\R^m\), \(g=\nabla f\) denotes the weak gradient if \(\int_{\R^m}f\,\partial_i\varphi\,dx=-\int_{\R^m}g_i\varphi\,dx\) for every smooth, compactly supported \(\varphi:\R^m\to\R\) and \(i=1,\ldots,m\), where \(\partial_i=\partial/\partial x_i\).

\section{Problem Formulation}\label{sec:problem}
\subsection{System model and optimal control}
Consider the discrete-time linear system
\begin{equation*}
 x_{t+1}=A_\theta x_t+B_\theta u_t+w_{t+1},\qquad x_0\in\R^{n_x}, 
\end{equation*}
where \(t=0,1,\ldots\), \(x_t\in\R^{n_x}\) is the state, \(u_t\in\R^{n_u}\) is the control input, and \(w_{t+1}\in\R^{n_x}\) is process noise. The state is observed without measurement noise, and the initial state \(x_0\) is fixed and known. The dynamics depend on an unknown parameter \(\theta\in\Theta\subseteq\R^d\) through the affine structure
\begin{equation*}
 A_\theta=A_0+\sum_{i=1}^d\theta_iA_i,
 \qquad
 B_\theta=B_0+\sum_{i=1}^d\theta_iB_i.
\end{equation*}
where \(A_i\in\R^{n_x\times n_x}\) and \(B_i\in\R^{n_x\times n_u}\), \(i=0,\ldots,d\), are known matrices. For \(v\in\R^d\), write
\begin{equation*}
 A[v]\triangleq \sum_{i=1}^dv_iA_i,\qquad B[v]\triangleq \sum_{i=1}^dv_iB_i.
\end{equation*}
The stage cost is
\begin{equation*}
 c(x,u)=x^\top Qx+u^\top Ru,
 \qquad Q\succeq0,\quad R\succ0.
\end{equation*}

Let \((\xi_t)_{t\ge0}\) be the policy randomization sequence, independent of the process noise, whose joint distribution does not depend on \(\theta\). The variable \(\xi_t\) is available at time \(t\). Let \(\mathcal F_t\triangleq \sigma(x_0,\ldots,x_t,u_0,\ldots,u_{t-1},\xi_0,\ldots,\xi_t)\), \(t\ge0\), with the input list empty at \(t=0\).
A causal policy \(\pi\) chooses \(u_t\) as an \(\mathcal F_t\)-measurable function, and \(w_{t+1}\) is independent of \(\mathcal F_t\).

We impose the following moment condition on the process noise.

\begin{assumption}\label{ass:noise-moments}
The disturbances are independent and identically distributed, with a distribution independent of \(\theta\), and satisfy
\begin{equation*}
 \E w_t=0,\qquad \E\norm{w_t}^2<\infty,\qquad
 W\triangleq \E[w_tw_t^\top]\succ0.
\end{equation*}
\end{assumption}

\begin{assumption}\label{ass:regular}
The parameter domain \(\Theta\) contains an open neighborhood of a known point \(\theta_0\). The cost matrices \(Q,R\) are known, \((A_{\theta_0},B_{\theta_0})\) is stabilizable, and \((Q^{1/2},A_{\theta_0})\) is observable.
\end{assumption}

Observability means that every nonzero initial state produces positive state cost at some time under the nominal dynamics with zero input and noise. It holds automatically when \(Q\succ0\). Both system properties persist in a sufficiently small nominal neighborhood (\cref{lem:benchmark}).

For \(\theta\in\Theta\) such that \((A_\theta,B_\theta)\) is stabilizable and \((Q^{1/2},A_\theta)\) is observable, let \(P_\theta\succ0\) solve the discrete algebraic Riccati equation (DARE) \cite[Theorems~5--6]{hager1976convergence}. Set
\begin{equation}\label{eq:optimal-quantities}
\begin{aligned}
 S_\theta&\triangleq R+B_\theta^\top P_\theta B_\theta,
 & K_\theta&\triangleq S_\theta^{-1}B_\theta^\top P_\theta A_\theta,\\
 F_\theta&\triangleq A_\theta-B_\theta K_\theta,
 & J_\theta&\triangleq \tr(P_\theta W).
\end{aligned}
\end{equation}
The DARE has the Bellman form
\begin{equation}
 P_\theta=Q+K_\theta^\top RK_\theta+F_\theta^\top P_\theta F_\theta.\label{eq:bellman-riccati}
\end{equation}
The stationary optimal feedback is \(u=-K_\theta x\), with closed-loop matrix \(F_\theta\) and average cost \(J_\theta\).

\subsection{Regret and local minimax formulation}
Let \(\Pp_\theta^\pi,\E_\theta^\pi\) denote trajectory probability and expectation under parameter \(\theta\) and policy \(\pi\), over process noise and policy randomization. Indices fixed by context may be omitted.
For an integer horizon \(T\ge1\), let
\begin{equation*}
 V_T^\pi(\theta)\triangleq \E_\theta^\pi\sum_{t=0}^{T-1}c(x_t,u_t),
 \qquad
 V_T^\star(\theta)\triangleq \inf_\pi V_T^\pi(\theta).
\end{equation*}
The infimum is over all causal policies with knowledge of \(\theta\). No terminal cost is included. We use finite-horizon regret
\begin{equation}
 R_T^\pi(\theta)\triangleq V_T^\pi(\theta)-V_T^\star(\theta).         \label{eq:FH-regret}
\end{equation}
To compare systems near the nominal point, for a fixed \(\varepsilon>0\) define
\begin{equation}
 \Theta_T(\theta_0,\varepsilon)
 \triangleq \{\theta_0+h:\norm{h}\le\varepsilon T^{-1/4}\}.           \label{eq:local-class}
\end{equation}
We consider sufficiently large \(T\) so that this ball lies in the nominal neighborhood specified above.

Let \(\Pi_T(\theta_0,\varepsilon)\) be all causal policies allowed to know \(T,\theta_0,\varepsilon\), the structural model, and the fixed noise law, but not the true local alternative. Each policy uses the same decision rules across that ball. Define
\begin{equation}
 \Reg_T(\theta_0,\varepsilon)
 \triangleq \inf_{\pi\in\Pi_T(\theta_0,\varepsilon)}
 \sup_{\theta\in\Theta_T(\theta_0,\varepsilon)}R_T^\pi(\theta). \label{eq:minimax}
\end{equation}

We use the local scale of \cite{ziemann2021uninformative}. On this neighborhood, nominal optimal feedback has \(O(\sqrt T)\) regret by \cref{prop:nominal-upper}.

We also use regret relative to the optimal average cost,
\begin{equation*}
 \bar R_T^\pi(\theta)\triangleq V_T^\pi(\theta)-TJ_\theta.
\end{equation*}
By \cref{lem:benchmark}, the two regret measures differ by a uniformly bounded constant for policies with finite expected cumulative cost.

For the lower bounds, we use an exact identity that expresses regret as a sum of nonnegative control losses. Let \(P_{\theta,0}=0\), and let \(P_{\theta,h}\) and \(K_{\theta,h}\) denote the Riccati solution and gain with \(h\ge1\) steps remaining, defined by \eqref{eq:app-fh-rec}. Set \(S_{\theta,h}\triangleq R+B_\theta^\top P_{\theta,h-1}B_\theta\). For a policy with finite \(V_T^\pi(\theta)\), the finite-horizon performance difference identity is
\begin{equation}
 R_T^\pi(\theta)
 =\sum_{t=0}^{T-1}\E_\theta^\pi
 \norm{u_t+K_{\theta,T-t}x_t}_{S_{\theta,T-t}}^2.             \label{eq:FH-PDI}
\end{equation}
This identity is proved in Appendix~\ref{app:nominal-bounds}.

\subsection{Controller identifiability and Fisher information}
Models indistinguishable under a fixed optimal feedback may require different optimal gains. Controller identifiability asks whether model changes invisible under optimal feedback also leave the optimal gain unchanged to first order.

Fix an interior parameter \(\theta\in\Theta\) with \((A_\theta,B_\theta)\) stabilizable and \((Q^{1/2},A_\theta)\) observable. Hold \(u=-K_\theta x\) fixed while changing the plant parameter to \(\theta+sv\), where \(v\in\R^d\) and \(s\in\R\) is small. The closed-loop matrix changes by \(s\cZ_\theta v\), where \(\cZ_\theta v\triangleq A[v]-B[v]K_\theta\). Directions in \(\ker\cZ_\theta\) leave the closed-loop dynamics unchanged and are called invisible under this feedback.

The gain derivative is the linear map \(DK_\theta:\R^d\to\R^{n_u\times n_x}\) defined by \(DK_\theta[v]\triangleq \left.\frac{d}{ds}K_{\theta+sv}\right|_{s=0}\). The notation \(DK_\theta=0\) means that it vanishes in every direction.

\begin{definition}\label{def:CI}
Controller identifiability (CI) holds at \(\theta\) if
\begin{equation*}
 \ker\cZ_\theta\subseteq\ker DK_\theta.
\end{equation*}
\end{definition}

Thus CI requires that every invisible direction have zero derivative of the optimal gain. In the local minimax problem, CI is evaluated at \(\theta_0\).

To express invisibility in terms of Fisher information, we additionally impose regularity on the noise density.

\begin{assumption}\label{ass:location-noise}
The noise law has a Lebesgue density \(p\), and \(f=\sqrt p\) has a weak gradient satisfying
\begin{equation*}
 \int_{\R^{n_x}}\norm{\nabla f(w)}^2\,dw<\infty.
\end{equation*}
\end{assumption}

Assumption~\ref{ass:location-noise} applies to Fisher information statements and lower bounds. Define the location score and Fisher information matrix by
\begin{equation*}
\begin{gathered}
 \psi(w)\triangleq -2\frac{\nabla f(w)}{f(w)},\\
 J_w\triangleq \E[\psi(w)\psi(w)^\top]=4\int \nabla f(w)\nabla f(w)^\top\,dw,
\end{gathered}
\end{equation*}
where \(\psi\) is defined arbitrarily on \(\{f=0\}\). For a positive differentiable density, \(\psi=-\nabla\log p\). For Gaussian noise, \(\psi(w)=W^{-1}w\) and \(J_w=W^{-1}\).

Invisible directions are precisely those with zero Fisher information under the fixed optimal feedback. Let \(\Sigma_\theta\succ0\) be its stationary state covariance, satisfying \(\Sigma_\theta=F_\theta\Sigma_\theta F_\theta^\top+W\). Under \cref{ass:location-noise}, averaging the single-step directional Fisher information over the stationary state distribution gives
\begin{equation*}
 \cI_\theta^\star(v,v)
 =\tr\!\left[J_w(\cZ_\theta v)\Sigma_\theta(\cZ_\theta v)^\top\right]. 
\end{equation*}
Since \(J_w\succ0\) and \(\Sigma_\theta\succ0\),
\begin{equation}
 \cI_\theta^\star(v,v)=0\quad\text{if and only if}\quad\cZ_\theta v=0.                       \label{eq:Fisher-kernel}
\end{equation}
Thus CI requires the optimal gain derivative to vanish in every direction carrying zero information under the fixed optimal feedback. Appendix~\ref{app:info} derives the information formula and establishes \(J_w\succ0\).

\section{Main Results}\label{sec:main-results}
\subsection{Local minimax classification}
\begin{theorem}\label{thm:phase}
Under Assumptions~\ref{ass:noise-moments}--\ref{ass:location-noise}, fix \(\varepsilon\in(0,\infty)\). In each case below, there are constants \(0<c\le C<\infty\) such that, for all sufficiently large \(T\):
\begin{enumerate}
\item If \(DK_{\theta_0}=0\), then
\begin{equation*}
 \Reg_T(\theta_0,\varepsilon)\le C.
\end{equation*}
\item If CI holds at \(\theta_0\) and \(DK_{\theta_0}\neq0\), then
\begin{equation*}
 c\log T\le \Reg_T(\theta_0,\varepsilon)
 \le C\log T.
\end{equation*}
\item If CI fails at \(\theta_0\), then
\begin{equation*}
 c\sqrt T\le \Reg_T(\theta_0,\varepsilon)
 \le C\sqrt T.
\end{equation*}
\end{enumerate}
All upper bounds remain valid without \cref{ass:location-noise} and are attained by policies that do not use the noise law. The constants may depend on that fixed law, the nominal instance, \(x_0\), and \(\varepsilon\), but not on \(T\).
\end{theorem}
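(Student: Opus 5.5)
The proof splits into three upper bounds and two lower bounds, assembled case by case.

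For the upper bounds we work with the average-cost regret \(\bar R_T^\pi\) and pass to \(R_T^\pi\) via \cref{lem:benchmark}, which changes regret only by a uniformly bounded constant.

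\textbf{Case 1.} We use nominal feedback \(u_t=-K_{\theta_0}x_t\). By the steady-state form of \eqref{eq:FH-PDI}, the excess cost per step is of order \(\E\norm{(K_{\theta_0}-K_\theta)x_t}^2\). Smoothness of \(\theta\mapsto K_\theta\) near \(\theta_0\) follows from the implicit function theorem applied to the DARE, using \cref{lem:benchmark}. Since \(DK_{\theta_0}=0\), we get \(\norm{K_\theta-K_{\theta_0}}\le C\norm{h}^2\le C\varepsilon^2T^{-1/2}\). The per-step loss is therefore \(O(T^{-1})\), and the total is \(O(1)\).

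\textbf{Upper bound in case 3.} The same computation with only \(\norm{K_\theta-K_{\theta_0}}\le C\norm h\) gives \(O(T\cdot T^{-1/2})=O(\sqrt T)\). This is \cref{prop:nominal-upper}.

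\textbf{Upper bound in case 2.} We invoke the certainty-equivalent policy of \cref{sec:FCE}. It estimates only the visible component \(\proj_{(\ker\cZ_{\theta_0})^\perp}\theta\) from closed-loop data under feedback near \(K_{\theta_0}\). The structural characterization in \cref{sec:main-results} shows that, under CI, the gain depends locally only on the visible component up to second-order error. In the local ball this error is \(O(\norm h^2)=O(T^{-1/2})\) in gain, hence \(O(1)\) in regret, so it is harmless. The visible component is identifiable at rate \(\E\norm{\hat\theta_t-\theta}^2\lesssim 1/t\) without exploration, because the regressors \(x_t\) carry \(\Sigma_{\theta}\succ0\) excitation through \(\cZ\). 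Summing \(1/t\) gives \(C\log T\). The constant is uniform over the ball, and the policy uses neither the noise law nor \(T\).

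\textbf{Lower bounds.} These are in \cref{sec:lower}. We start from the exact identity \eqref{eq:FH-PDI}, which lower bounds regret by the gain-tracking error \(\sum_t\E\norm{u_t+K_{\theta,T-t}x_t}^2\). We then apply a van Trees (Bayesian Cramér--Rao) inequality along a one-dimensional curve \(\theta_0+s v\). We take a smooth prior on \(s\) supported in \([-\varepsilon T^{-1/4}/\norm v,\varepsilon T^{-1/4}/\norm v]\), using the Fisher information of \cref{ass:location-noise}. Any estimate of \(K_{\theta}v'\)-type quantities from the inputs \(u_t\) then has error at least the squared derivative divided by accumulated information plus prior information. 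The accumulated information up to time \(t\) is \(\sum_k\E\tr[J_w(\cZ_{\theta_0+sv}v+B[v](K_{\theta}-\tilde K_k))\Sigma(\cdot)^\top]\). Here \(\tilde K_k\) is the effective gain used, so information grows only in proportion to the policy's deviation from \(K_\theta\), which is itself regret.

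\textbf{Case 3 (CI fails).} Choose \(v\in\ker\cZ_{\theta_0}\) with \(DK_{\theta_0}v\neq0\). The information is then at most \(C(t\norm h^2+\text{regret}_t)\) and the prior information is of order \(T^{1/2}\). The van Trees bound gives, per step, error at least \(c\,\norm{DK v}^2/(T^{1/2}+\mathrm{Reg})\). The resulting self-consistent inequality \(\mathrm{Reg}\ge cT/(\sqrt T+\mathrm{Reg})\) forces \(\mathrm{Reg}\gtrsim\sqrt T\).

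\textbf{Case 2 (\(DK_{\theta_0}\ne0\) but CI holds).} Choose a visible direction with \(DK_{\theta_0}v\ne0\). The information then grows linearly, \(\asymp t\), so the per-step error is \(\gtrsim1/(T^{1/2}+t)\). Summing gives \(c\log T\). The prior term is dominated because \(\log\) of the ratio \(T/T^{1/2}\) is still of order \(\log T\).

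\textbf{Main obstacle.} The hardest step is the case-3 lower bound. Two things need care there: controlling the finite-horizon gains \(K_{\theta,T-t}\) against \(K_\theta\), which differ by only exponentially small terms, and bounding the information as information \(\lesssim\) regret plus \(t\norm h^2\) uniformly over policies, including randomized ones. For the latter, the plan is to bound the score increment by \((A_\theta-A_{\theta_0})x+(B_\theta-B_{\theta_0})u\) expressed via \(\cZ v\,x+B[v](u+K_{\theta_0}x)\), then apply Cauchy--Schwarz together with the performance difference identity.
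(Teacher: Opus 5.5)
Your architecture is the paper's. You use nominal feedback with a Taylor bound on \(K_\theta-K_{\theta_0}\) for case~1 and the case-3 upper bound, and FCE for the case-2 upper bound. For the lower bounds you use van Trees along a line with a prior of width \(T^{-1/4}\) (prior information \(\Theta(\sqrt T)\)), the split \(A[v]x+B[v]u=B[v](u+K_sx)+B[v](K_0-K_s)x\) giving information \(O(R_T(s)+\sqrt T+1)\) along an invisible harmful direction, and the resulting self-consistent inequality. What is missing is the step that turns control loss into an estimation problem to which van Trees applies.

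The identity \eqref{eq:FH-PDI} bounds regret below by \(\E\sum_t\norm{u_t+K_{\theta,T-t}x_t}^2\). However, \(u_t\) is not an estimator of a deterministic function of \(s\): its target \(-K_sx_t\) involves the random, \(s\)-dependent state. Suppose you apply van Trees to \(\ip{q}{u_t}\) as an estimate of \(-\ip{q}{K_sx_t}\). Integration by parts in \(s\) then gives the numerator \(\bigl(\E\ip{q}{DK_{\theta_s}[v]x_t}\bigr)^2\). This is zero whenever \(\E_s x_t=0\), for instance with \(x_0=0\) and any linear feedback, so no bound results. More basically, a small residual forces an accurate gain only along excited state directions. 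A Gram-matrix lower bound is therefore indispensable, and your lower-bound sketch has no excitation argument.

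The paper closes this with an explicit data-measurable estimator. On a block \(I=[\ell,b)\subset[1,T/2]\) of length \(n\) with \(b\le2n\), it forms the least-squares gain \(-\bigl(\sum_{t\in I}u_tx_t^\top\bigr)V_I^\dagger\) and projects it onto a compact ball \(\cK\). The small-ball bound of \cref{lem:location-smallball}, which needs only \(W\succ0\), plus a net argument gives \(V_I\succeq c_gnI\) outside an event of probability \(O(n^{-2})\) (\cref{lem:app-Gram}). Completing the square then yields \(\E\sum_{t\in I}\norm{u_t+K_sx_t}^2\ge c_Bn\,\E\fnorm{\widehat K_I-K_s}^2-C_B/n\) (\cref{lem:block-risk}). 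Van Trees is applied to the scalar \(\ip{G}{\widehat K_I}_F/\fnorm{G}\) with \(G=DK_{\theta_0}[v]\), whose target has derivative at least \(\fnorm{G}/2\) on the prior support. Only after this reduction does your per-step error make sense. In case~3 the single block \([\lfloor T/4\rfloor,\lfloor T/2\rfloor)\) gives the self-bound. In case~2 each dyadic block \([n_j,2n_j)\) with \(n_j\ge\sqrt T\) contributes a constant.

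Three smaller points also need fixing.
\begin{itemize}
\item In case~2 the information is only bounded by \(C(t+R_T(s)+1)\), not \(\asymp t\), because a policy can buy information with large inputs. You need the dichotomy that either the Bayes regret exceeds \(\sqrt T\) or every block is longer than it; this also supplies the hypothesis of \cref{lem:block-risk}.
\item \(K_{\theta,T-t}\) is exponentially close to \(K_\theta\) only when \(T-t\) is large, which is why all lower-bound arguments are restricted to \(t\le T/2\).
\item In the FCE upper bound, the invisible component is invisible only under \(K_0\). Under \(K_j\) it biases the regression through \(-B[\nu](K_j-K_0)\). Your \(1/t\) rate for the visible component ignores this, and the paper controls it with the shrinking projection radius \(\rho_j\).
\end{itemize}
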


The upper bounds are proved in Section~\ref{sec:FCE}. The matching lower bounds and the proof of \cref{thm:phase} are completed in Section~\ref{sec:lower}.

When \(A\) is known and \(B\) depends affinely on the unknown parameter,
\begin{equation*}
 \cZ_\theta v=-B[v]K_\theta.
\end{equation*}
By \eqref{eq:Fisher-kernel}, the uninformativeness criterion in \cite{ziemann2021uninformative} is equivalent to failure of CI.

\begin{corollary}\label{cor:ZS}
Consider the subclass with known \(A\) and \(B\) depending affinely on the unknown parameter, under \cref{ass:noise-moments,ass:regular,ass:location-noise}. In the local minimax experiment \eqref{eq:local-class}--\eqref{eq:minimax}, whose nominal center is known, an \(O(\log T)\) regret upper bound is attainable if and only if the nominal optimal policy is not uninformative in the sense of \cite{ziemann2021uninformative}.
\end{corollary}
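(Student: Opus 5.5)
The plan is to reduce the corollary to \cref{thm:phase}, using the identification of the uninformativeness criterion of \cite{ziemann2021uninformative} with failure of CI. In the known-\(A\) subclass we have \(A_i=0\) for \(i\ge1\), so \(\cZ_{\theta_0}v=-B[v]K_{\theta_0}\). The nominal optimal policy is uninformative in the sense of \cite{ziemann2021uninformative} when some direction \(v\) has zero Fisher information under \(u=-K_{\theta_0}x\) yet changes the optimal gain to first order, i.e.\ \(DK_{\theta_0}[v]\neq0\). By \eqref{eq:Fisher-kernel}, zero information is equivalent to \(B[v]K_{\theta_0}=0\), that is, \(v\in\ker\cZ_{\theta_0}\). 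Hence uninformativeness is exactly the existence of \(v\in\ker\cZ_{\theta_0}\setminus\ker DK_{\theta_0}\), which is the negation of \cref{def:CI}. I would check that the Fisher information in \cite{ziemann2021uninformative} matches \(\cI^\star_{\theta_0}\) up to a positive definite factor. For Gaussian noise, \(J_w=W^{-1}\); in general, \cref{ass:location-noise} and the fact \(J_w\succ0\) from Appendix~\ref{app:info} make both kernels equal \(\ker\cZ_{\theta_0}\).

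\emph{Sufficiency.} Suppose the nominal policy is not uninformative, so CI holds at \(\theta_0\). If \(DK_{\theta_0}\neq0\), part~2 of \cref{thm:phase} gives \(\Reg_T(\theta_0,\varepsilon)\le C\log T\). If \(DK_{\theta_0}=0\), part~1 gives \(\Reg_T\le C\le C\log T\) for large \(T\). In both cases the attaining policy belongs to \(\Pi_T(\theta_0,\varepsilon)\), so an \(O(\log T)\) upper bound is attainable. \cref{ass:regular} applies because the subclass is a special case of the affine family. The one point to verify is that the theorem's hypotheses do not need \(A_{\theta_0}\) nonsingular; they do not, since that assumption appears only in the fixed-neighborhood extension.

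\emph{Necessity.} Suppose the nominal policy is uninformative, so CI fails at \(\theta_0\). Part~3 of \cref{thm:phase} gives \(\Reg_T(\theta_0,\varepsilon)\ge c\sqrt T\) for all large \(T\). Every policy \(\pi\in\Pi_T\) therefore has \(\sup_{\theta\in\Theta_T}R_T^\pi(\theta)\ge c\sqrt T\), and this is not \(O(\log T)\) because \(\sqrt T/\log T\to\infty\). Here \(O(\log T)\) means a bound uniform over the local class, possibly with a sequence of horizon-dependent policies, which is exactly how \(\Reg_T\) is defined.

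The only delicate step is the first one: aligning the definition of uninformativeness in \cite{ziemann2021uninformative} with \(\ker\cZ_{\theta_0}\not\subseteq\ker DK_{\theta_0}\). That paper phrases it through the gain derivative restricted to the null space of the information matrix. I would state carefully that the two null spaces coincide, using \eqref{eq:Fisher-kernel} together with \(\Sigma_{\theta_0}\succ0\) and \(J_w\succ0\). The rest follows directly from \cref{thm:phase}.
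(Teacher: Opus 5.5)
Your proposal is correct and takes essentially the same route as the paper. In the known-\(A\) subclass \(\cZ_{\theta_0}v=-B[v]K_{\theta_0}\), so by \eqref{eq:Fisher-kernel} uninformativeness in the sense of \cite{ziemann2021uninformative} is exactly failure of CI, and both directions of the equivalence then follow from the three cases of \cref{thm:phase} (your explicit handling of \(DK_{\theta_0}=0\) via part~1 is left implicit in the paper).
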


\subsection{Characterizations of CI}
The following theorem characterizes CI through exact invariance of the optimal controller and a bound on gain error.

\begin{theorem}\label{thm:quotient}
Fix an interior parameter \(\theta\) with \((A_\theta,B_\theta)\) stabilizable and \((Q^{1/2},A_\theta)\) observable. Then the following are equivalent.
\begin{enumerate}
\item CI holds at \(\theta\), i.e., \(\ker\cZ_\theta\subseteq\ker DK_\theta\).
\item There is a neighborhood of \(\theta\) in \(\Theta\) on which \((A_{\theta'},B_{\theta'})\) is stabilizable and \((Q^{1/2},A_{\theta'})\) is observable. For every \(\theta'\) in this neighborhood with \(\theta'-\theta\in\ker\cZ_\theta\), the Riccati solution, optimal gain, and optimal closed-loop matrix equal \(P_\theta\), \(K_\theta\), and \(F_\theta\), respectively.
\item There are \(r,C>0\) such that the closed ball \(\{\theta':\norm{\theta'-\theta}\le r\}\) lies in \(\Theta\) and consists of stabilizable systems observable through \(Q^{1/2}\). For every \(\theta'\) in this ball,
\begin{equation*}
 \norm{K_{\theta'}-K_\theta}
 \le C\fnorm{\cZ_\theta(\theta'-\theta)}.
\end{equation*}
\end{enumerate}
\end{theorem}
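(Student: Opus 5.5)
I will prove the cycle (1)\(\Rightarrow\)(2)\(\Rightarrow\)(3)\(\Rightarrow\)(1), with the key step being (1)\(\Rightarrow\)(2). The tool is the fixed-point structure of the Riccati equation along invisible directions.

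\emph{(1)\(\Rightarrow\)(2).} Let \(v\in\ker\cZ_\theta\) and set \(\theta'=\theta+v\). Then \(A_{\theta'}-B_{\theta'}K_\theta=F_\theta\), so the pair \((P_\theta,K_\theta)\) satisfies the Bellman/Lyapunov equation \eqref{eq:bellman-riccati} for the model \(\theta'\) with the fixed gain \(K_\theta\):
\begin{equation*}
P_\theta=Q+K_\theta^\top RK_\theta+(A_{\theta'}-B_{\theta'}K_\theta)^\top P_\theta(A_{\theta'}-B_{\theta'}K_\theta).
\end{equation*}
Hence \(P_\theta\) is the value of the stabilizing policy \(K_\theta\) on \(\theta'\). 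It is the Riccati solution at \(\theta'\) exactly when \(K_\theta\) is greedy for \(P_\theta\) on \(\theta'\), that is, when
\begin{equation*}
G(v)\triangleq B_{\theta'}^\top P_\theta A_{\theta'}-(R+B_{\theta'}^\top P_\theta B_{\theta'})K_\theta=0.
\end{equation*}
Using \(A_{\theta'}=F_\theta+B_{\theta'}K_\theta\), this reduces to \(G(v)=B_{\theta'}^\top P_\theta F_\theta-RK_\theta\). The map \(v\mapsto G(v)\) is affine in \(v\), and \(G(0)=0\) is the stationarity condition at \(\theta\). So \(G(v)=B[v]^\top P_\theta F_\theta\), which is linear in \(v\).

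It remains to show \(B[v]^\top P_\theta F_\theta=0\) from CI. Differentiate the DARE along \(s\mapsto\theta+sv\) with \(v\in\ker\cZ_\theta\). Since the closed-loop perturbation \(\cZ_\theta v\) vanishes, the envelope-type identity gives the derivative \(DP_\theta[v]\) as the solution of a Lyapunov equation with closed-loop matrix \(F_\theta\) and zero forcing, so \(DP_\theta[v]=0\). Differentiating \(S_\theta K_\theta=B_\theta^\top P_\theta A_\theta\) then yields
\begin{equation*}
S_\theta\,DK_\theta[v]=B[v]^\top P_\theta F_\theta+(\text{terms in }DP_\theta[v])=B[v]^\top P_\theta F_\theta.
\end{equation*}
Thus CI, \(DK_\theta[v]=0\), gives \(G(v)=0\) exactly. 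By uniqueness of the stabilizing DARE solution at \(\theta'\), we get \(P_{\theta'}=P_\theta\), \(K_{\theta'}=K_\theta\), and \(F_{\theta'}=F_\theta\). Stabilizability and observability on a neighborhood follow from \cref{lem:benchmark}. Computing \(DP_\theta[v]\) and checking this identity carefully is the main obstacle, since the cross terms must be shown to cancel.

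\emph{(2)\(\Rightarrow\)(3).} Decompose \(h=\theta'-\theta=h_0+h_1\) with \(h_0\in\ker\cZ_\theta\) and \(h_1\in(\ker\cZ_\theta)^\perp\), so that \(\norm{h_1}\le\norm{\cZ_\theta^\dagger}\fnorm{\cZ_\theta h}\). Since \(\norm{h_0}\le\norm h\le r\), the point \(\theta+h_0\) lies in the neighborhood of (2), and \(K_{\theta+h_0}=K_\theta\). Because \(K\) is real-analytic, hence \(C^1\), on a compact ball by implicit-function smoothness of the stabilizing DARE solution, the mean value theorem gives
\begin{equation*}
\norm{K_{\theta'}-K_{\theta+h_0}}\le L\norm{h_1}.
\end{equation*}
Combining the two bounds yields (3) for \(r\) small enough.

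\emph{(3)\(\Rightarrow\)(1).} For \(v\in\ker\cZ_\theta\), apply (3) with \(\theta'=\theta+sv\): \(\norm{K_{\theta+sv}-K_\theta}\le C\fnorm{s\cZ_\theta v}=0\) for small \(s\). Hence \(DK_\theta[v]=0\), which is CI.
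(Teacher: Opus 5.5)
Your proof is correct and is essentially the paper's argument. Your (1)\(\Rightarrow\)(2) step is the paper's invariance lemma: the frozen gain \(K_\theta\) keeps the closed loop equal to \(F_\theta\), and \(DP_\theta[v]=0\) together with \(S_\theta DK_\theta[v]=B[v]^\top P_\theta F_\theta\) turns CI into the stationarity condition at \(\theta+v\), after which uniqueness of the Riccati solution finishes. Your (2)\(\Rightarrow\)(3) and (3)\(\Rightarrow\)(1) steps reproduce the paper's orthogonal-decomposition-plus-Lipschitz bound and its differentiation argument, arranged as a single cycle rather than the paper's two separate equivalences with (1).
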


The proof is in Appendix~\ref{app:quotient-proof}.

\begin{remark}
Under CI at \(\theta\), gain error is controlled by the change in closed-loop dynamics under the fixed feedback \(K_\theta\), rather than by the full plant error.
\end{remark}

The following corollary quantifies how much plant uncertainty can remain under CI. Let
\(\mathcal D\triangleq\{v:A[v]=0,\ B[v]=0\}\)
denote parameter changes that leave both dynamics matrices unchanged, and define
\begin{equation}
 \cN_\theta\triangleq \ker\cZ_\theta,
 \qquad
 \cM_\theta\triangleq \cN_\theta^\perp.                              \label{eq:split-theta}
\end{equation}

\begin{corollary}\label{cor:physical-identifiability}
Fix an interior parameter \(\theta\) with \((A_\theta,B_\theta)\) stabilizable and \((Q^{1/2},A_\theta)\) observable. If CI holds at \(\theta\), then
\begin{equation}
 \dim\cN_\theta-\dim\mathcal D
 \le n_u\bigl(n_x-\rank A_\theta\bigr).                 \label{eq:physical-nullity}
\end{equation}
If \(A_\theta\) is nonsingular, CI at \(\theta\) is equivalent to \(\cN_\theta=\mathcal D\), that is, plant identifiability under optimal feedback after redundant parameters are removed.
\end{corollary}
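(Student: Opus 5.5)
The plan is to use the invariance characterization in \cref{thm:quotient} (item~2) to turn CI into an algebraic constraint on each invisible direction. That constraint is then counted dimensionally.

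\emph{Step 1: two identities for each \(v\in\cN_\theta\).} Fix \(v\in\cN_\theta\), so that \(A[v]=B[v]K_\theta\). By \cref{thm:quotient}, for all sufficiently small \(s\) the perturbed model \(\theta+sv\) has the same Riccati solution \(P_\theta\) and the same gain \(K_\theta\). The gain relation \(S_{\theta'}K_{\theta'}=B_{\theta'}^\top P_{\theta'}A_{\theta'}\) from \eqref{eq:optimal-quantities} therefore holds with \(P=P_\theta\), \(K=K_\theta\), \(A'=A_\theta+sB[v]K_\theta\) and \(B'=B_\theta+sB[v]\).

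Expand both sides in \(s\). The constant terms agree by definition of \(K_\theta\). The \(s^2\) terms are \(B[v]^\top P_\theta B[v]K_\theta\) on both sides, so they cancel. The coefficient of \(s\) gives
\[
B[v]^\top P_\theta B_\theta K_\theta+B_\theta^\top P_\theta B[v]K_\theta
=B[v]^\top P_\theta A_\theta+B_\theta^\top P_\theta B[v]K_\theta .
\]
This simplifies to
\[
B[v]^\top P_\theta F_\theta=0,\qquad\text{equivalently}\qquad F_\theta^\top P_\theta B[v]=0 .
\]
Hence every column of \(B[v]\) lies in \(\ker(F_\theta^\top P_\theta)\).

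\emph{Step 2: relating \(\rank F_\theta\) to \(\rank A_\theta\).} This is the only point needing care. Write \(F_\theta=(I-B_\theta S_\theta^{-1}B_\theta^\top P_\theta)A_\theta\). By the Woodbury identity,
\[
I-B_\theta(R+B_\theta^\top P_\theta B_\theta)^{-1}B_\theta^\top P_\theta=(I+B_\theta R^{-1}B_\theta^\top P_\theta)^{-1},
\]
so the prefactor is invertible and \(\rank F_\theta=\rank A_\theta\). Since \(P_\theta\succ0\), it follows that
\[
\dim\ker(F_\theta^\top P_\theta)=n_x-\rank A_\theta .
\]

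\emph{Step 3: counting dimensions.} Consider the linear map \(L:\cN_\theta\to\R^{n_x\times n_u}\), \(v\mapsto B[v]\). By Step~1, its image lies in the space of matrices whose columns belong to \(\ker(F_\theta^\top P_\theta)\). By Step~2, that space has dimension \(n_u(n_x-\rank A_\theta)\).

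Next, identify \(\ker L\). If \(v\in\cN_\theta\) and \(B[v]=0\), then \(A[v]=B[v]K_\theta=0\), so \(v\in\mathcal D\). Conversely, \(\mathcal D\subseteq\cN_\theta\) trivially, since \(A[v]=0\) and \(B[v]=0\) give \(\cZ_\theta v=0\). Thus \(\ker L=\mathcal D\), and rank--nullity yields \eqref{eq:physical-nullity}.

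\emph{Step 4: the nonsingular case.} If \(A_\theta\) is nonsingular and CI holds, the bound in Step~3 is zero, so \(\cN_\theta=\mathcal D\). For the converse, suppose \(\cN_\theta=\mathcal D\). For \(v\in\mathcal D\) the matrices \(A_{\theta+sv}\) and \(B_{\theta+sv}\) do not depend on \(s\), so \(K_{\theta+sv}=K_\theta\) and \(DK_\theta[v]=0\). Hence \(\ker\cZ_\theta=\mathcal D\subseteq\ker DK_\theta\), which is CI.
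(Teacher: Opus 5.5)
Your proof is correct and follows the paper's argument: rank--nullity for \(v\mapsto B[v]\) on \(\cN_\theta\) with kernel \(\mathcal D\), columns of \(B[v]\) in \(\ker(F_\theta^\top P_\theta)\), and \(\rank F_\theta=\rank A_\theta\) via the invertible prefactor \(I+B_\theta R^{-1}B_\theta^\top P_\theta\). The only difference is how you get \(B[v]^\top P_\theta F_\theta=0\): you expand the gain identity along the exact fiber from \cref{thm:quotient}, while the paper reads it off directly from \eqref{eq:null-derivatives} with \(DK_\theta[v]=0\), which is shorter since that fiber invariance is itself proved from this identity.
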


The proof is in Appendix~\ref{app:physical-proof}.

\begin{remark}
The difference \(\dim\cN_\theta-\dim\mathcal D\) counts independent invisible plant changes after removing parameter redundancy. By \eqref{eq:physical-nullity}, CI at \(\theta\) can hold without plant identifiability under optimal feedback only if \(A_\theta\) is singular.
For unrestricted dynamics matrices, the dimension of the set sharing an optimal feedback and closed-loop matrix is related to the rank of the state matrix \cite[Theorem~2]{faradonbeh2020adaptive}.
\end{remark}

Section~\ref{sec:auxiliary-example} gives an example in which an unknown input coefficient changes the plant but not its optimal controller.

\subsection{Uncertainty radius and fixed neighborhoods}\label{sec:radius}
The neighborhoods in \cref{thm:phase} shrink at rate \(T^{-1/4}\). We now examine how regret depends on the radius and give sufficient conditions for logarithmic regret on a fixed neighborhood. For \(r>0\) sufficiently small that \(\theta_0+r\ball_2\subseteq\Theta\), define
\begin{equation*}
 \begin{aligned}
 \Theta(r)&\triangleq\theta_0+r\ball_2,\\
 \mathcal R_T(r)&\triangleq\inf_\pi\sup_{\theta\in\Theta(r)}R_T^\pi(\theta).
 \end{aligned}
\end{equation*}
The policies know \(\theta_0,r,T\) and the same structural and noise information as before.

\begin{proposition}\label{thm:radius}
Under \cref{ass:noise-moments,ass:location-noise,ass:regular}, there are \(r_0,c,C>0\) and \(T_0<\infty\), independent of \(r,T\), such that for \(0<r\le r_0\) and \(T\ge T_0\), the nominal policy \(u_t=-K_{\theta_0}x_t\) gives \(\mathcal R_T(r)\le C(1+Tr^2)\). In the following bounds, CI is evaluated at \(\theta_0\):
\begin{align}
 \mathcal R_T(r)&\le C\bigl[1+\log(1+Tr^2)+Tr^4\bigr]
 &&\text{if CI holds}, \label{eq:radius-CI}\\
 \mathcal R_T(r)&\ge c\log(1+Tr^2)-C
 &&\text{if }DK_{\theta_0}\ne0, \label{eq:radius-log}\\
 \mathcal R_T(r)&\ge c\min\{Tr^2,\sqrt T\}-C
 &&\text{if CI fails}. \label{eq:radius-hard}
\end{align}
The \(Tr^4\) term in \eqref{eq:radius-CI} can be omitted if \(\cN_{\theta_0}=\mathcal D\). If \(DK_{\theta_0}=0\), nominal control instead gives \(\mathcal R_T(r)\le C(1+Tr^4)\). All upper bounds hold without \cref{ass:location-noise}, using policies that do not use the noise law.
\end{proposition}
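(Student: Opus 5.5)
The plan is to reuse the machinery behind \cref{thm:phase}, keeping the radius \(r\) as a free parameter instead of fixing \(r=\varepsilon T^{-1/4}\).

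\textbf{Nominal and \(DK_{\theta_0}=0\) bounds.} For \(u_t=-K_{\theta_0}x_t\), I would use \eqref{eq:FH-PDI} together with \cref{lem:benchmark}. The finite-horizon gains \(K_{\theta,T-t}\) converge geometrically to \(K_\theta\), uniformly on a small ball, so the mismatch contributes \(O(1)\). The stationary part is then bounded by
\[
\sum_t \E\norm{(K_\theta-K_{\theta_0})x_t}^2\le CT\norm{K_\theta-K_{\theta_0}}^2,
\]
using the uniform second-moment bounds available because \(K_{\theta_0}\) stabilizes the whole ball. Since \(K\) is analytic in \(\theta\), \(\norm{K_\theta-K_{\theta_0}}\le Cr\), which gives \(C(1+Tr^2)\). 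If \(DK_{\theta_0}=0\), Taylor expansion gives \(\norm{K_\theta-K_{\theta_0}}\le Cr^2\), which gives \(C(1+Tr^4)\).

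\textbf{Upper bound \eqref{eq:radius-CI}.} I would run the certainty-equivalent policy of Section~\ref{sec:FCE}, which estimates only the visible component in \(\cM_{\theta_0}\) and keeps the invisible component at its nominal value. Write \(h=\theta-\theta_0=h_M+h_N\). The analysis of that section should decompose the regret into three parts:
\begin{enumerate}
\item an estimation term from the visible component;
\item a bias term from ignoring \(h_N\);
\item a burn-in constant.
\end{enumerate}
The estimation term should be the sum over epochs of \(\min\{1/t,\ r^2\}\). Here \(1/t\) is the variance of the least-squares estimate, and \(r^2\) caps it because we project onto the known ball; \(\proj_C\) appears in the notation for this purpose. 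Summing gives \(\sum_{t\le T}\min\{1/t,r^2\}\le C(1+\log(1+Tr^2))\).

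For the bias, the gain-error bound announced in the contributions (visible and invisible components present together) should give
\[
\norm{K_{\theta_0+h_M}-K_\theta}\le C\norm{h}\norm{h_N}\le Cr^2,
\]
because by \cref{thm:quotient}(2) the gain is exactly invariant along \(\cN_{\theta_0}\) at \(\theta_0\). The per-step loss from this bias is then \(Cr^4\), contributing \(Tr^4\). When \(\cN_{\theta_0}=\mathcal D\), the invisible component does not change \((A_\theta,B_\theta)\) at all, so the bias is identically zero and the \(Tr^4\) term drops.

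\textbf{Lower bounds.} For \eqref{eq:radius-log}, I would reuse the van Trees / Bayesian Cramér--Rao argument of Section~\ref{sec:lower}. Put a smooth prior of width \(r\) along a direction \(v\) with \(DK_{\theta_0}[v]\ne0\). The \(\log T\) argument gives a per-step loss of order \(\min\{1/(t I),\ r^2\}\), where \(1/(tI)\) is the Bayesian risk from Fisher information accumulated up to time \(t\), and \(r^2\) is the prior-variance cap. Summing yields \(c\log(1+Tr^2)-C\).

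For \eqref{eq:radius-hard}, I would repeat the uninformative-direction argument of Section~\ref{sec:lower}. Take two hypotheses \(\theta_0\) and \(\theta_0+\delta v\) with \(v\in\ker\cZ_{\theta_0}\), \(DK_{\theta_0}[v]\ne0\), and \(\delta=\min\{r,\,T^{-1/4}\}\). There is a tradeoff: a policy staying within \(\eta\) of \(K_{\theta_0}\) gathers information of order \(T\eta^2\delta^2\), while distinguishing the hypotheses forces a loss of order \(T\delta^2\) unless \(\eta\gtrsim\delta\). Optimizing gives \(T\delta^2\wedge\sqrt T\), that is, \(\min\{Tr^2,\sqrt T\}\).

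\textbf{Main obstacle.} The step I expect to be hardest is making the constants uniform in \(r\) down to \(r\to0\), especially the interpolation \(\min\{1/t,r^2\}\). The upper bound needs the estimator to be clipped to the ball so that its error is never worse than the prior radius. The lower bound needs the van Trees prior-information term \(\asymp r^{-2}\) to enter additively with the Fisher information \(tI\). I would first establish the upper bound with an explicit projection step and check that the Section~\ref{sec:FCE} constants do not depend on \(r\).
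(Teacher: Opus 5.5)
Your outline has the same architecture as the paper for the nominal bound, for \eqref{eq:radius-CI} (FCE with projection onto $\cM_{\theta_0}\cap r\ball_2$, as in Remark~\ref{rem:radius-policy}), and for \eqref{eq:radius-log} (van Trees with a width-$r$ prior, whose information is $r^{-2}J(\lambda_0)$). It differs in one point of execution and takes a genuinely different route for \eqref{eq:radius-hard}.

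\emph{Upper bound.} The paper takes $N_0(r)=\lceil 4r^{-2}\rceil$. The deterministic bound $\norm{K_j-K_\theta}\le Cr$ then makes the first epoch cost $O(1)$, and only $O(1+\log(1+Tr^2))$ epochs follow, each contributing $O(1)$. Your $\min\{1/t,r^2\}$ accounting with a fixed $N_0$ also works. However, the boundary terms $Cr^2$ in \eqref{eq:epoch-loss} then occur $O(\log T)$ times, so you must check $r^2\log T\le C(1+\log(1+Tr^2))$, which holds for $r\le1$.

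\emph{Square-root lower bound.} You use a two-point test between $\theta_0$ and $\theta_0+\delta v$. The paper instead keeps van Trees with prior width $h=\min\{r,T^{-1/4}\}$. It bounds the information via the split \eqref{eq:null-score-split} by $C[R_T(s)+Th^2+1]$ and solves the self-bounding inequality \eqref{eq:radius-self-bound}. Your route makes the trade-off transparent. Your threshold $\eta\gtrsim\delta$ is legitimate precisely because $\delta\le T^{-1/4}$ gives $\delta\le(\sqrt T\delta)^{-1}$, which is the true distinguishability threshold. To be rigorous under \cref{ass:location-noise}, though, your route needs three things:
\begin{enumerate}
\item Hellinger distance rather than KL, since KL between shifted noise laws can be infinite for compactly supported densities with finite Fisher information.
\item A sequential Hellinger bound for adaptively collected data, of order $\delta^2(R_T(0)+R_T(\delta)+1)+T\delta^4$.
\item Its own conversion of indistinguishability into control loss, replacing \cref{lem:block-risk}, via the overlap of the two laws and \cref{lem:location-smallball}.
\end{enumerate}
What the paper's approach buys is uniformity: one van Trees and block-estimator mechanism serves both lower bounds, and it absorbs random, regret-funded exploration automatically.

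Two points in your sketch need tightening, though neither changes the rates.
\begin{enumerate}
\item Your bias accounting omits the regression nuisance $\cZ_{K_j}\nu=-B[\nu](K_j-K_0)$. It has size $Cr^2$ and biases $\widetilde\mu_{j+1}$ through $\cL_{K_j}^\dagger$. It contributes the same order $r^4$ per step as the cross-fiber term, and it also vanishes when $\cN_{\theta_0}=\mathcal D$.
\item In \eqref{eq:radius-log}, your heuristic $\min\{1/(tI),r^2\}$ treats the information as $tI$, but a policy can buy information with regret, since $I_t\le C(t+R_T+1)$. The paper first disposes of the case $B_T(\lambda_r)\ge\sqrt{Tr^2}$. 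It then uses dyadic blocks starting at $m\asymp\max\{r^{-2},\sqrt{Tr^2}\}$, so that both the regret and the prior information are at most the block length. This also makes the remainder $C_B/n$ of \cref{lem:block-risk} harmless.
\end{enumerate}
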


Appendix~\ref{app:radius} proves the bounds and uniformity of their constants. The policy attaining \eqref{eq:radius-CI} is constructed in Remark~\ref{rem:radius-policy}.

Setting \(r=\varepsilon T^{-1/4}\) recovers the bounded, logarithmic, and square-root rates in \cref{thm:phase}. For fixed \(r>0\), however, the \(Tr^4\) term grows linearly with \(T\), so \eqref{eq:radius-CI} alone does not give logarithmic regret. The \(Tr^4\) term bounds the loss caused by invisible model variation. If \(A_{\theta_0}\) is nonsingular and CI holds at \(\theta_0\), \cref{cor:physical-identifiability} gives \(\cN_{\theta_0}=\mathcal D\), which removes this term and yields the following consequence.

\begin{corollary}\label{cor:fixed-radius}
Under \cref{ass:noise-moments,ass:regular}, suppose \(A_{\theta_0}\) is nonsingular and CI holds at \(\theta_0\). For every sufficiently small fixed \(r>0\), there exists a policy with \(O(\log T)\) regret uniformly over \(\Theta(r)\). The policy uses the known center and radius, but neither \(T\) nor the noise law. If, additionally, \cref{ass:location-noise} holds and \(DK_{\theta_0}\ne0\), then \(\mathcal R_T(r)=\Theta(\log T)\).
\end{corollary}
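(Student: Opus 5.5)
The corollary follows by combining \cref{cor:physical-identifiability} with the fixed-radius bounds of \cref{thm:radius}. There are three steps.

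\emph{Step 1: reduce to the case without the \(Tr^4\) term.} Since \(A_{\theta_0}\) is nonsingular and CI holds at \(\theta_0\), \cref{cor:physical-identifiability} gives \(\cN_{\theta_0}=\mathcal D\). By \cref{thm:radius}, this equality allows the \(Tr^4\) term in \eqref{eq:radius-CI} to be dropped. So for \(0<r\le r_0\) and \(T\ge T_0\) we get
\begin{equation*}
 \mathcal R_T(r)\le C\bigl[1+\log(1+Tr^2)\bigr].
\end{equation*}
For fixed \(r\), the right-hand side is \(O(\log T)\), and \(C\) does not depend on \(r\) or \(T\). The finitely many horizons \(T<T_0\) are absorbed into the constant. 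For these, I use the nominal-policy bound \(C(1+Tr^2)\), or, for the constructed policy itself, uniform boundedness of its cost at finitely many horizons.

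\emph{Step 2: check what the policy uses.} The policy attaining \eqref{eq:radius-CI} is the one from Remark~\ref{rem:radius-policy}, a certainty-equivalent policy estimating only the visible component in \(\cM_{\theta_0}\). I need to verify that it uses neither \(T\) nor the noise law, only \(\theta_0\), \(r\), and the structure. The noise-law part follows from the statement of \cref{thm:radius}: its upper bounds hold without \cref{ass:location-noise}, using policies that do not use the noise law. Horizon independence is the delicate point. I expect it to hold if the policy's exploration and projection schedules are anytime, as for the algorithm of Section~\ref{sec:FCE}. If the remark's construction used \(T\), for example through a doubling or burn-in length, I would replace that dependence with a doubling-free anytime schedule. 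A logarithmic bound is preserved under a doubling trick in any case, since \(\sum_k \log 2^k=O(\log^2T)\). That sum is quadratic in \(\log T\), so the doubling trick is only a fallback, not an adequate substitute. I therefore intend to argue directly that the Section~\ref{sec:FCE} policy is horizon-free.

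Because \(\cN_{\theta_0}=\mathcal D\), every invisible direction changes neither \(A_\theta\) nor \(B_\theta\). Projecting out \(\cN_{\theta_0}\) thus leaves the true plant exactly. The visible estimate then determines \(K_\theta\) on the whole ball \(\Theta(r)\), not only to first order, which is why no \(Tr^4\) bias appears.

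\emph{Step 3: the lower bound.} Under \cref{ass:location-noise} and \(DK_{\theta_0}\ne0\), \eqref{eq:radius-log} gives \(\mathcal R_T(r)\ge c\log(1+Tr^2)-C\). For fixed \(r\), this is at least \(c'\log T\) for large \(T\). Together with Step 1, this yields \(\mathcal R_T(r)=\Theta(\log T)\).

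The main obstacle is Step 2, that is, confirming horizon independence and uniformity over the fixed ball of the policy from Remark~\ref{rem:radius-policy}. The rest is bookkeeping on the constants in \cref{thm:radius}.
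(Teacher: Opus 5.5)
Your proposal is correct and follows the paper's proof: \cref{cor:physical-identifiability} gives \(\cN_{\theta_0}=\mathcal D\), which removes the \(Tr^4\) term in \eqref{eq:radius-CI}, and \eqref{eq:radius-log} supplies the matching lower bound for fixed \(r\). The step you flag as delicate is settled directly by Remark~\ref{rem:radius-policy}: that policy uses a fixed projection radius \(r\), \(N_0(r)=\lceil4r^{-2}\rceil\), and doubling epochs, and it is stated to be anytime, so your doubling-trick fallback is not needed (and, as you note, it would only give \(O(\log^2T)\)).
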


The proof is in Appendix~\ref{app:radius}.

\section{Certainty Equivalence and Regret Upper Bounds}\label{sec:upper-simple}\label{sec:FCE}
This section proves all upper bounds in \cref{thm:phase}. For the remainder, write \(K_0\triangleq K_{\theta_0}\). A gain \(K\) in the policy superscript of \(V_T^K\) or \(R_T^K\) denotes the stationary policy \(u_t=-Kx_t\).

Under \cref{ass:noise-moments,ass:regular}, the fixed policy \(u_t=-K_0x_t\) gives \(O(\sqrt T)\) regret, or \(O(1)\) if \(DK_{\theta_0}=0\), uniformly over \(\Theta_T(\theta_0,\varepsilon)\) for fixed \(\varepsilon>0\) and sufficiently large \(T\) (\cref{prop:nominal-upper} in Appendix~\ref{app:nominal-bounds}). We therefore focus on constructing a policy that improves the bound to \(O(\log T)\) when CI holds at \(\theta_0\) and \(DK_{\theta_0}\neq0\).

\subsection{Fiber-aware certainty equivalence}
We call the policy in Algorithm~\ref{alg:FCE} fiber-aware certainty equivalence (FCE). Here fibers are sets of parameters that give the same closed-loop matrix under nominal optimal feedback. Under CI at \(\theta_0\), FCE estimates only the visible parameter component. Decompose a true local alternative as
\begin{equation}
 \theta-\theta_0=\mu+\nu,
 \qquad \mu\in\cM_{\theta_0},\quad\nu\in\cN_{\theta_0}.                       \label{eq:true-split}
\end{equation}
The components \(\mu\) and \(\nu\) are visible and invisible under this feedback, respectively. If \(\cM_{\theta_0}=\{0\}\), all sufficiently nearby parameters share the nominal optimal gain (\cref{thm:exact-fiber}), so nominal control gives bounded regret without estimation (\cref{prop:nominal-upper}). Henceforth assume \(\cM_{\theta_0}\neq\{0\}\).

Starting from \(\widehat\mu_0=0\), FCE applies the optimal feedback for the model \(\theta_0+\widehat\mu_j\) during epoch \(j=0,1,\ldots\). The estimate \(\widehat\mu_j\in\cM_{\theta_0}\) is held fixed throughout the epoch, with
\begin{equation*}
 K_j\triangleq K_{\theta_0+\widehat\mu_j},
 \qquad u_t=-K_jx_t.
\end{equation*}
Fix any integer \(N_0\ge4\) independently of the noise law. Epoch \(j\) starts at \(\tau_j\triangleq N_0(2^j-1)\) and lasts \(N_j=N_02^j\) steps, so the time between updates doubles at each epoch.

The update estimates \(\mu\) from the change in closed-loop dynamics under the current feedback. For any feedback \(K\), define
\begin{equation*}
 \cZ_Kv\triangleq A[v]-B[v]K,
 \qquad
 \cL_K\triangleq \cZ_K|_{\cM_{\theta_0}}.
\end{equation*}
Here \(\cZ_{K_\theta}=\cZ_\theta\). The restriction \(\cL_{K_0}\) is injective, and \cref{lem:left-inverse} gives uniformly bounded left inverses for nearby \(\cL_K\). Choose \(\bar\rho>0\) small enough to preserve this injectivity and local stability (\cref{lem:common-lyap}). The parameter projection keeps \(\theta_0+\widehat\mu_j\) in this neighborhood. Set \(\rho_0\triangleq \bar\rho\), choose \(c_\rho>\varepsilon\), and use the projection radius
\begin{equation}
 \rho_{j+1}\triangleq \min\{\bar\rho,c_\rho N_j^{-1/4}\}.             \label{eq:rho}
\end{equation}

To compute the update, compare the estimated dynamics with the nominal model under the same feedback \(K_j\). The nominal closed-loop matrix is \(G_{0,j}\triangleq A_{\theta_0}-B_{\theta_0}K_j\).
For the matrix estimate, fix in advance a compact convex Frobenius ball \(\cG\), known to the learner, containing \(A_\theta-B_\theta K_{\theta_0+\eta}\) for \(\norm{\theta-\theta_0}\le\bar\rho\) and \(\eta\in\cM_{\theta_0}\) with \(\norm{\eta}\le\bar\rho\).

At the end of epoch \(j\), estimate \(G_{\theta,j}\triangleq A_\theta-B_\theta K_j\) by ordinary least squares (OLS) using \((x_t,x_{t+1})\) for \(t=\tau_j+1,\ldots,\tau_j+N_j-1\), as in \eqref{eq:app-OLSdef}. Project the OLS estimate onto \(\cG\) in the Frobenius norm to obtain \(\widehat G_j\). Subtract \(G_{0,j}\), apply the restricted left inverse, and project the resulting parameter estimate to obtain
\begin{align}
 \widetilde\mu_{j+1}
 &\triangleq \cL_{K_j}^\dagger(\widehat G_j-G_{0,j}),                 \label{eq:mu-raw}\\
 \widehat\mu_{j+1}
 &\triangleq \proj_{\cM_{\theta_0}\cap\rho_{j+1}\ball_2}(\widetilde\mu_{j+1}). \label{eq:mu-proj}
\end{align}
\begin{algorithm}[t]
\caption{Fiber-Aware Certainty Equivalence}
\label{alg:FCE}
\begin{algorithmic}[1]
\Require \(\theta_0\), structural maps, \(\varepsilon\), \(N_0\), \(c_\rho\), maximum projection radius \(\bar\rho\), matrix projection ball \(\cG\)
\State compute \(K_0\), \(\cN_{\theta_0}=\ker\cZ_{K_0}\), \(\cM_{\theta_0}=\cN_{\theta_0}^\perp\)
\State set \(\widehat\mu_0=0\)
\For{\(j=0,1,2,\ldots\)}
  \State set \(N_j=N_02^j\), \(K_j=K_{\theta_0+\widehat\mu_j}\)
  \State apply \(u_t=-K_jx_t\) for \(N_j\) steps
  \State compute OLS using the last \(N_j-1\) transitions of the epoch and project onto \(\cG\) to obtain \(\widehat G_j\), as in \eqref{eq:app-OLSdef}
  \State compute \(\widetilde\mu_{j+1}\) by \eqref{eq:mu-raw}
  \State set \(\rho_{j+1}\) by \eqref{eq:rho} and obtain \(\widehat\mu_{j+1}\) by projection \eqref{eq:mu-proj}
\EndFor
\end{algorithmic}
\end{algorithm}

\begin{remark}\label{rem:radius-policy}
For the radius bounds in \cref{thm:radius,cor:fixed-radius}, use Algorithm~\ref{alg:FCE} with fixed projection onto \(\cM_{\theta_0}\cap r\ball_2\) and initial epoch length \(N_0(r)=\lceil4r^{-2}\rceil\). The radius \(r\) is an input, and the resulting policy is anytime. Appendix~\ref{app:radius} establishes its regret bounds.
\end{remark}

\subsection{Estimation and gain error}
The gain error of FCE depends on both estimation of the visible component \(\mu\) and the effect of the invisible component \(\nu\). We first bound the gain error from omitting \(\nu\) when \(\mu\) is known.

\begin{proposition}\label{thm:cross-fiber}
Under \cref{ass:regular}, if CI holds at \(\theta_0\), then there are \(r_2,L_2>0\) such that the closed \(r_2\)-ball about \(\theta_0\) lies in the parameter domain and consists of stabilizable systems observable through \(Q^{1/2}\), and
\begin{equation}
 \norm{K_{\theta_0+\mu+\nu}-K_{\theta_0+\mu}}
 \le L_2\norm{\mu}\norm{\nu}                              \label{eq:cross-fiber}
\end{equation}
for all \(\mu\in\cM_{\theta_0}\), \(\nu\in\cN_{\theta_0}\) with \(\norm{\mu}+\norm{\nu}\le r_2\).  The same estimate holds for \(P_\theta\).
\end{proposition}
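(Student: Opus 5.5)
The plan is to view the gain difference as a smooth function of two variables that vanishes on both coordinate axes, and then apply Hadamard's lemma twice. Set
\[
g(\mu,\nu)\triangleq K_{\theta_0+\mu+\nu}-K_{\theta_0+\mu},
\qquad \mu\in\cM_{\theta_0},\ \nu\in\cN_{\theta_0}.
\]
First pick \(r_2\) so that the closed \(r_2\)-ball about \(\theta_0\) lies in \(\Theta\) and consists of stabilizable systems observable through \(Q^{1/2}\). Stabilizability and observability are open conditions (\cref{lem:benchmark}), so such a radius exists. On this ball the stabilizing DARE solution \(P_\theta\) is real-analytic in \(\theta\), and hence so is \(K_\theta\). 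This follows from the implicit function theorem applied to the DARE: its linearization in \(P\) is \(X\mapsto X-F_\theta^\top XF_\theta\), which is invertible because \(F_\theta\) is Schur stable. Consequently \(g\) is \(C^2\) (indeed \(C^\infty\)), and its second derivatives are uniformly bounded on the compact set \(\norm{\mu}+\norm{\nu}\le r_2\).

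Next come the two vanishing properties. When \(\nu=0\) we have \(g(\mu,0)=0\) trivially. When \(\mu=0\), \cref{thm:quotient}(2) at \(\theta=\theta_0\) gives \(K_{\theta_0+\nu}=K_{\theta_0}\) for all small \(\nu\in\cN_{\theta_0}\), so \(g(0,\nu)=0\) for \(\norm{\nu}\) below the neighborhood radius of that theorem; we shrink \(r_2\) to fit inside it. Since \(g(\cdot,0)\equiv0\), Hadamard's lemma gives
\[
g(\mu,\nu)=\int_0^1 \partial_\nu g(\mu,s\nu)[\nu]\,ds .
\]
Since \(g(0,\cdot)\equiv0\) on a neighborhood, \(\partial_\nu g(0,s\nu)=0\). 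Applying the mean value theorem in \(\mu\) to \(\partial_\nu g(\cdot,s\nu)\) then yields
\[
\norm{\partial_\nu g(\mu,s\nu)}\le \sup\norm{\partial_\mu\partial_\nu g}\,\norm{\mu}.
\]
Combining the two displays gives \eqref{eq:cross-fiber}, with \(L_2\) equal to the supremum of the mixed second derivative over the segment set. That set is convex in \(\mu\) for fixed \(s\nu\), because it is contained in the region \(\norm{\mu}+\norm{\nu}\le r_2\).

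The main obstacle is the second vanishing property, which is exactly where CI enters. CI is a first-order condition, but the argument needs exact vanishing of \(g(0,\nu)\) on a whole neighborhood; \cref{thm:quotient}(2) supplies this. The claim for \(P_\theta\) follows verbatim by replacing \(K\) with \(P\), since \cref{thm:quotient}(2) also gives \(P_{\theta_0+\nu}=P_{\theta_0}\) and \(P\) is equally smooth.
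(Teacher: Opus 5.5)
Your proposal is correct and is essentially the paper's argument. The paper also writes the four-term difference \(K_{\theta_0+\mu+\nu}-K_{\theta_0+\mu}-K_{\theta_0+\nu}+K_{\theta_0}\) as a double integral of the mixed second derivative \(D^2K\), cancels the last two terms by exact invariance on the invisible subspace (via \cref{thm:exact-fiber}, the same fact you take from \cref{thm:quotient}(2)), and bounds by the supremum of \(D^2K\) on a small closed ball; your Hadamard-lemma-plus-mean-value formulation is just a nested version of that same double integral.
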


The proof is in Appendix~\ref{app:cross-fiber-proof}.

If \(\mu\) were known, ignoring \(\nu\) would mean using the gain \(K_{\theta_0+\mu}\). On \(\Theta_T(\theta_0,\varepsilon)\), both components are \(O(T^{-1/4})\), so \eqref{eq:cross-fiber} gives a gain error of \(O(T^{-1/2})\).

The invisible component also affects estimation of \(\mu\) under the current feedback. By \eqref{eq:true-split},
\begin{equation}
 G_{\theta,j}=G_{0,j}+\cZ_{K_j}\mu+\cZ_{K_j}\nu.            \label{eq:Gsplit}
\end{equation}
The term \(\cZ_{K_j}\nu\) can bias the estimate of \(\mu\). Since \(\cZ_{K_0}\nu=0\),
\begin{equation}
 \cZ_{K_j}\nu=-B[\nu](K_j-K_0).                     \label{eq:nuisance-identity}
\end{equation}
Thus the perturbation of the regression equation from \(\nu\) is bounded by a constant times \(\norm{\nu}\norm{K_j-K_0}\). Keeping the controller near \(K_0\) limits this source of estimation error.

Write \(r_T\triangleq \varepsilon T^{-1/4}\) for the radius of the local parameter set \(\Theta_T(\theta_0,\varepsilon)\).

\begin{proposition}\label{prop:FCE-track}
Under Assumptions~\ref{ass:noise-moments} and~\ref{ass:regular}, suppose CI holds at \(\theta_0\). For FCE, there is a constant \(C>0\), independent of \(j,T\) and the true parameter in \(\Theta_T(\theta_0,\varepsilon)\), such that, for every epoch completed by time \(T\) and all sufficiently large \(T\),
\begin{align}
 \E\norm{\widehat\mu_{j+1}-\mu}^2
 &\le \frac{C}{N_j}+Cr_T^2\rho_j^2,                          \label{eq:mu-track}\\
 \E\norm{K_{\theta_0+\widehat\mu_{j+1}}-K_\theta}^2
 &\le \frac{C}{N_j}+Cr_T^2\rho_j^2+Cr_T^4.                  \label{eq:K-track}
\end{align}
Moreover, for every epoch that starts before time \(T\), \(\norm{K_j-K_\theta}\le C\rho_j\) deterministically on the local class.
\end{proposition}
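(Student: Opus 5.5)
We first prove the deterministic claim, then the parameter estimate \eqref{eq:mu-track}, and finally deduce the gain bound \eqref{eq:K-track} from it.

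\emph{Deterministic gain bound.} By construction, \(\widehat\mu_j\in\cM_{\theta_0}\) with \(\norm{\widehat\mu_j}\le\rho_j\). Smoothness of \(\theta\mapsto K_\theta\) near \(\theta_0\) then gives \(\norm{K_j-K_0}\le C\rho_j\). This uses the Riccati solution being analytic on the neighborhood from \cref{lem:benchmark}, with \(\bar\rho\) chosen inside it. On the local class we also have \(\norm{K_\theta-K_0}\le C\norm{\theta-\theta_0}\le Cr_T\). Since \(c_\rho>\varepsilon\) and \(N_j\le T\), we get \(\rho_j\ge\min\{\bar\rho,c_\rho T^{-1/4}\}\ge r_T\) for large \(T\), and hence \(r_T\le C\rho_j\). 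The triangle inequality then yields \(\norm{K_j-K_\theta}\le C\rho_j\).

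\emph{Estimation step.} We condition on \(\mathcal F_{\tau_j}\). Within epoch \(j\), the gain \(K_j\) is fixed, and \cref{lem:common-lyap} makes \(G_{\theta,j}\) uniformly stable with a common Lyapunov function. A standard OLS analysis of a stable linear system driven by i.i.d.\ noise with finite second moment then gives \(\E[\fnorm{\widehat G_j-G_{\theta,j}}^2\mid\mathcal F_{\tau_j}]\le C(1+\norm{x_{\tau_j}}^2/N_j)/N_j\). We assume this is the content of the appendix OLS bound behind \eqref{eq:app-OLSdef}. The projection onto the convex set \(\cG\ni G_{\theta,j}\) is nonexpansive, so it only helps. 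Discarding the first transition, and using the uniform bound on \(\E\norm{x_{\tau_j}}^2\) that follows from the common Lyapunov function, the first term is \(C/N_j\).

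\emph{Error decomposition.} By \eqref{eq:Gsplit}, \(\widehat G_j-G_{0,j}=\cL_{K_j}\mu+\cZ_{K_j}\nu+(\widehat G_j-G_{\theta,j})\). Applying \(\cL_{K_j}^\dagger\), which is a left inverse uniformly bounded by \cref{lem:left-inverse}, gives
\begin{equation*}
 \widetilde\mu_{j+1}-\mu=\cL_{K_j}^\dagger\cZ_{K_j}\nu+\cL_{K_j}^\dagger(\widehat G_j-G_{\theta,j}).
\end{equation*}
By \eqref{eq:nuisance-identity}, the first term is bounded by \(C\norm{\nu}\norm{K_j-K_0}\le Cr_T\rho_j\). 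The true \(\mu\) lies in \(\cM_{\theta_0}\cap\rho_{j+1}\ball_2\), since \(\norm{\mu}\le r_T\le c_\rho N_j^{-1/4}\) and \(r_T\le\bar\rho\) for large \(T\). The projection in \eqref{eq:mu-proj} is therefore nonexpansive toward \(\mu\). Squaring and taking expectations gives \eqref{eq:mu-track}.

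\emph{Gain error.} We split
\begin{equation*}
 K_{\theta_0+\widehat\mu_{j+1}}-K_\theta=\bigl(K_{\theta_0+\widehat\mu_{j+1}}-K_{\theta_0+\mu}\bigr)+\bigl(K_{\theta_0+\mu}-K_{\theta_0+\mu+\nu}\bigr).
\end{equation*}
The first difference is at most \(L\norm{\widehat\mu_{j+1}-\mu}\) by local Lipschitz continuity of the gain. The second is at most \(L_2\norm{\mu}\norm{\nu}\le L_2r_T^2\) by \cref{thm:cross-fiber}. Squaring and combining with \eqref{eq:mu-track} gives \eqref{eq:K-track}.

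\emph{Main obstacle.} The delicate step is the OLS bound. It must be uniform over the data-dependent gain \(K_j\), since \(K_j\) is \(\mathcal F_{\tau_j}\)-measurable, and it must hold under only second moments, with no sub-Gaussianity. We would control the inverse empirical covariance using the lower bound from \(W\succ0\) together with the projection onto \(\cG\): the projection caps the error by \(\operatorname{diam}\cG\) on the rare event that the empirical Gram matrix is ill-conditioned. We would bound the probability of that event by a Chebyshev-type argument on the empirical covariance, with deviations of order \(N_j^{-1/2}\). This needs \(N_0\ge4\) and fourth moments of the state, so it may require a more careful truncation argument if only second moments of the noise are available.
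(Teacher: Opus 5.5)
Your proof is correct and follows essentially the same route as the paper. It uses the same containment \(\norm{\mu}\le r_T\le\rho_{j+1}\) and the same decomposition via \eqref{eq:Gsplit}, with the nuisance term bounded through \eqref{eq:nuisance-identity}, the bounded left inverse, and nonexpansive projection; the gain bound uses the same split into a Lipschitz term plus \cref{thm:cross-fiber}. In the deterministic step the inequality you need is \(N_{j-1}\le T\), since \(\rho_j=\min\{\bar\rho,c_\rho N_{j-1}^{-1/4}\}\) for \(j\ge1\); \(N_j\le T\) can fail for the final incomplete epoch, but your conclusion is unaffected.

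The OLS bound you flag as the main obstacle is exactly \cref{lem:OLS}. The paper proves it with a small-ball argument that needs only second moments (\cref{lem:location-smallball,lem:app-Gram}): Azuma--Hoeffding on indicators, a net over directions, and a Markov bound from the trace. So your Chebyshev, fourth-moment, or truncation route is not needed.
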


In \eqref{eq:K-track}, \(C/N_j\) is the statistical estimation term, \(Cr_T^2\rho_j^2\) accounts for the effect of \(\nu\) on the regression, and \(Cr_T^4\) bounds the squared gain error from omitting \(\nu\).

\begin{proof}
For all sufficiently large \(T\), \(r_T\le\bar\rho\). If epoch \(j\) is completed by time \(T\), then \(N_j\le T\). Since \(c_\rho>\varepsilon\),
\begin{equation}
 \norm{\mu}\le r_T
 \le\min\{\bar\rho,c_\rho N_j^{-1/4}\}
 =\rho_{j+1}.                                                \label{eq:truth-in-projection}
\end{equation}
Thus the projection set contains \(\mu\).

From \eqref{eq:Gsplit},
\begin{equation*}
 \widehat G_j-G_{0,j}
 =\cL_{K_j}\mu+\cZ_{K_j}\nu+(\widehat G_j-G_{\theta,j}).
\end{equation*}
The expected squared Frobenius norm of the last term is at most \(C/N_j\) by \cref{lem:OLS}, while \cref{lem:nuisance} bounds \(\fnorm{\cZ_{K_j}\nu}\) by \(Cr_T\rho_j\). Apply \(\cL_{K_j}^\dagger\) and use \(\cL_{K_j}^\dagger\cL_{K_j}\mu=\mu\) in the raw update \eqref{eq:mu-raw}. The bounded left inverse (\cref{lem:left-inverse}) and nonexpansiveness of projection then give \eqref{eq:mu-track}.
To pass from parameter error to gain error, write
\begin{align*}
 K_{\theta_0+\widehat\mu_{j+1}}-K_{\theta_0+\mu+\nu}
 =&\ K_{\theta_0+\widehat\mu_{j+1}}-K_{\theta_0+\mu}\\
 &+K_{\theta_0+\mu}-K_{\theta_0+\mu+\nu},
\end{align*}
where local Lipschitz continuity bounds the first difference by the parameter estimation error, and \cref{thm:cross-fiber} bounds the second by \(L_2\norm{\mu}\norm{\nu}\). Squaring and taking expectations gives \eqref{eq:K-track}.

For the deterministic bound, consider an epoch that starts before \(T\). If \(j=0\), then \(\widehat\mu_0=0\) and \(r_T\le\rho_0=\bar\rho\). Otherwise, epoch \(j-1\) was completed, so \eqref{eq:truth-in-projection} gives \(r_T\le\rho_j\). In both cases, \(\norm{\widehat\mu_j},\norm{\mu},\norm{\nu}\le\rho_j\), and local Lipschitz continuity gives \(\norm{K_j-K_\theta}\le C\rho_j\).
\end{proof}

\subsection{Logarithmic regret}
We write \(R_T^{\mathrm{FCE}}(\theta)\) for the regret \eqref{eq:FH-regret} of the policy in Algorithm~\ref{alg:FCE}.

\begin{theorem}\label{thm:log-upper}
Under \cref{ass:noise-moments,ass:regular}, suppose CI holds at \(\theta_0\). For every fixed \(\varepsilon>0\), there is a constant \(C>0\), independent of \(T\), such that FCE satisfies
\begin{equation*}
 \sup_{\theta\in\Theta_T(\theta_0,\varepsilon)}
 R_T^{\mathrm{FCE}}(\theta)
 \le C\log T
\end{equation*}
for all sufficiently large \(T\). The policy is anytime and does not use the noise law.
\end{theorem}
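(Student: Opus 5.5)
We bound the regret of FCE epoch by epoch through the finite-horizon performance difference identity \eqref{eq:FH-PDI}, after first passing to the average-cost regret \(\bar R_T\) via \cref{lem:benchmark}. For a piecewise-stationary linear policy \(u_t=-K_jx_t\) on epoch \(j\), the stationary analogue of \eqref{eq:FH-PDI} gives
\begin{equation*}
 \bar R_T^{\mathrm{FCE}}(\theta)\le \sum_{j}\sum_{t\in\text{epoch }j,\,t<T}\E\norm{(K_j-K_\theta)x_t}_{S_\theta}^2+\text{(transient and boundary terms)}.
\end{equation*}
The transient terms come from the mismatch between the current state covariance and the stationary covariance of \(F_\theta\) at each epoch switch, and from the finite-versus-infinite-horizon benchmark. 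Since the gains \(K_j\) lie in the neighborhood controlled by \cref{lem:common-lyap}, there is a common Lyapunov function. Hence \(\E\norm{x_t}^2\) is uniformly bounded, and each switch costs \(O(1)\) through a geometric decay of the covariance mismatch. Because the number of epochs before \(T\) is \(O(\log T)\), the transient terms total \(O(\log T)\).

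For the main term, \(K_j\) is \(\mathcal F_{\tau_j}\)-measurable. We condition on \(\mathcal F_{\tau_j}\) and use the uniform second-moment bound on \(x_t\) under the common Lyapunov function. This bounds the epoch-\(j\) contribution by \(C N_j\,\E\norm{K_j-K_\theta}^2\). For \(j=0\), the deterministic bound \(\norm{K_0-K_\theta}\le C r_T\) from \cref{thm:quotient}(3) is not directly available, since \(\nu\) may also move \(K_\theta\). Instead we use local Lipschitz continuity, which gives \(\norm{K_0-K_\theta}\le C r_T\). The contribution is then \(CN_0r_T^2=O(1)\).

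For \(j\ge1\), \eqref{eq:K-track} from \cref{prop:FCE-track} applied at epoch \(j-1\) bounds the contribution by
\begin{equation*}
 C N_j\Bigl(\frac{1}{N_{j-1}}+r_T^2\rho_{j-1}^2+r_T^4\Bigr).
\end{equation*}
The first term gives \(O(1)\) per epoch, hence \(O(\log T)\) in total. The third term sums to \(C r_T^4\sum_j N_j\le C r_T^4 T=C\varepsilon^4\), which is bounded. For the middle term, \(\rho_{j-1}^2\le c_\rho^2N_{j-2}^{-1/2}\), so \(N_j r_T^2\rho_{j-1}^2\le C r_T^2 N_j^{1/2}\). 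Summing this geometric series up to \(N_j\lesssim T\) gives \(Cr_T^2\sqrt T=C\varepsilon^2\), which is also bounded. The first few epochs, where \(\rho=\bar\rho\), contribute only \(O(1)\).

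The last epoch may be truncated at \(T\). There we instead use the deterministic bound \(\norm{K_j-K_\theta}\le C\rho_j\) together with the same expectation bound, restricted to the fewer steps actually taken. The regret bound is therefore \(O(\log T)\) uniformly over \(\Theta_T(\theta_0,\varepsilon)\).

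The step I expect to be the main obstacle is making the per-epoch cost bound \(CN_j\E\norm{K_j-K_\theta}^2\) rigorous. The state at the start of an epoch depends on the data used to compute \(K_j\), and the finite-horizon gains \(K_{\theta,T-t}\) differ from \(K_\theta\) near the horizon. I would handle the first issue with the common Lyapunov bound, which gives uniform moment bounds regardless of the history. For the second, I would use exponential convergence of the Riccati recursion, so that \(\sum_t\norm{K_{\theta,T-t}-K_\theta}^2\) is \(O(1)\) uniformly on the local class.

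The anytime claim holds because no step of the algorithm uses \(T\): \(\rho_j\) depends only on \(N_j\), and \(c_\rho>\varepsilon\) depends only on \(\varepsilon\). The claim that the noise law is not needed holds because \cref{lem:OLS} requires only \cref{ass:noise-moments}.
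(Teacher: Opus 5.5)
Your proposal is correct and follows essentially the paper's route: pass to \(\bar R_T\) via \cref{lem:benchmark}, bound each epoch's quadratic control loss using the \(\mathcal F_{\tau_j}\)-measurability of \(K_j\) and the common Lyapunov function, apply \cref{prop:FCE-track} at epoch \(j-1\), and sum the three terms to \(O(\log T)+O(1)+O(1)\), with the initial and truncated final epochs handled separately. Two points should be tightened. First, the stationary Bellman identity is exact up to the boundary term \(x_0^\top P_\theta x_0-\E x_T^\top P_\theta x_T\), so neither covariance-mismatch transients nor finite-horizon gains enter; also, \cref{thm:quotient}(3) is in fact available for \(j=0\). Second, and more importantly, the per-epoch bound is \(CN_j\E e_j^2+C\E[e_j^2x_{\tau_j}^\top Hx_{\tau_j}]\), where \(e_j=\norm{K_j-K_\theta}\) and \(x_{\tau_j}\) are dependent, so uniform moment bounds alone do not reduce it to \(CN_j\E e_j^2\). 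The paper handles this with the deterministic bound \(e_j\le C\rho_j\), which you already invoke for the last epoch; it makes the second term \(C\rho_j^2\), summable in \(j\). Even the cruder bound \(e_j\le C\) would give \(O(1)\) per epoch, which fits within your \(O(\log T)\) budget.
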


\begin{proof}
The stationary Bellman identity and \cref{lem:benchmark} bound regret by the cumulative quadratic control loss plus a constant. Let \(e_j\triangleq \norm{K_j-K_\theta}\), and let \(H\succ0\) be the common Lyapunov matrix from \cref{lem:common-lyap}. The gain \(K_j\) is fixed during epoch \(j\) and is \(\mathcal F_{\tau_j}\)-measurable. Applying the conditional bound on state energy \eqref{eq:conditional-energy} and taking expectations gives
\begin{equation}
 \E\sum_{t=\tau_j}^{\tau_j+N_j-1}\norm{(K_j-K_\theta)x_t}_{S_\theta}^2
 \le C N_j\E e_j^2+C\E[e_j^2x_{\tau_j}^\top Hx_{\tau_j}],    \label{eq:epoch-loss}
\end{equation}
where the second term accounts for the state at the start of the epoch. The deterministic bound \(e_j\le C\rho_j\) and \eqref{eq:uniform-state} bound this term by \(C\rho_j^2\), which is summable over \(j\).

It remains to sum \(N_j\E e_j^2\). In the following sums, \(j\) ranges over epochs with \(j\ge1\) completed by time \(T\). The controller in epoch \(j\) uses the estimate from epoch \(j-1\), so \cref{prop:FCE-track} gives
\(
\E e_j^2\le C/N_{j-1}+Cr_T^2\rho_{j-1}^2+Cr_T^4
\).
Since \(N_j=2N_{j-1}\), the statistical term contributes \(O(1)\) in each of the \(O(\log T)\) epochs. The other two terms have bounded sums.
\begin{align*}
 \sum_j N_j/N_{j-1}&=O(\log T),\\
 \sum_j N_jr_T^2\rho_{j-1}^2
 &\le CT^{-1/2}\sum_{N_j\le T}N_j^{1/2}=O(1),\\
 \sum_jN_jr_T^4&\le Tr_T^4=O(1).
\end{align*}
The initial epoch contributes \(O(1)\) because its length is fixed and \eqref{eq:uniform-state} bounds the state second moment. If the horizon ends inside epoch \(j\ge1\), apply \eqref{eq:epoch-loss} to its observed portion and use the tracking bound from the completed epoch \(j-1\). Since \(N_j=2N_{j-1}\le2T\), the same estimates give an \(O(1)\) contribution from this final portion.
\end{proof}

\section{Regret Lower Bounds}\label{sec:lower}
The following proposition provides the lower bounds needed for \cref{thm:phase}.

\begin{proposition}\label{thm:log-lower}\label{thm:sqrt-lower}
Under \cref{ass:noise-moments,ass:location-noise,ass:regular}, fix \(\varepsilon>0\). In each case below there is a constant \(c>0\), independent of \(T\), such that the bound holds for all sufficiently large \(T\).
\begin{enumerate}
\item If \(DK_{\theta_0}\neq0\), then
\begin{equation}
 \Reg_T(\theta_0,\varepsilon)\ge c\log T. \label{eq:log-lower}
\end{equation}
Under CI at \(\theta_0\), this matches \cref{thm:log-upper}.
\item If there is a unit direction \(v\) satisfying
\begin{equation}
 \cZ_{\theta_0}v=0,\qquad DK_{\theta_0}[v]\neq0, \label{eq:harmful-direction}
\end{equation}
then
\begin{equation}
 \Reg_T(\theta_0,\varepsilon)\ge c\sqrt T. \label{eq:sqrt-lower}
\end{equation}
\end{enumerate}
\end{proposition}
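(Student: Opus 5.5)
Both bounds rest on the finite-horizon performance difference identity \eqref{eq:FH-PDI}: for every \(\theta\) and causal policy \(\pi\),
\[
R_T^\pi(\theta)\ge c\sum_{t}\E_\theta^\pi\norm{u_t+K_{\theta,T-t}x_t}^2 .
\]
The finite-horizon gains \(K_{\theta,h}\) converge to \(K_\theta\) exponentially fast and uniformly over a nominal neighborhood, by \cref{lem:benchmark} and standard Riccati convergence. Hence, up to an \(O(1)\) term, regret dominates \(c\sum_t\E\norm{u_t+K_\theta x_t}^2\), restricted to the first \(T/2\) steps, where the states have uniformly bounded second moments under any policy with finite cost. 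Policies with very large cost already exceed the target bound, so we may assume the state second moments are \(O(1)\). Both parts then reduce to a two-point or prior-based (van Trees) argument. The argument combines the Fisher information under the policy with the separation \(\norm{K_{\theta_0+h}-K_{\theta_0}}\asymp\norm{DK_{\theta_0}[h]}\) for small \(h\).

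\textbf{Part 1 (logarithmic bound).} Pick \(v\) with \(DK_{\theta_0}[v]\neq0\) and a unit vector \(z\) with \(DK_{\theta_0}[v]z\) nonzero. Restrict to the one-dimensional subfamily \(\theta_s=\theta_0+s v\), \(|s|\le r_T\), and place a smooth prior \(\lambda\) on \([-r_T,r_T]\) with \(\mathcal J(\lambda)\asymp r_T^{-2}=\varepsilon^{-2}\sqrt T\). Write the \emph{instantaneous} excess
\[
\E\norm{u_t+K_{\theta_s}x_t}^2\ \ge\ \E\norm{(\widehat K_t - K_{\theta_s})x_t}^2 ,
\]
where \(\widehat K_t\) is the implied estimator. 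To make this rigorous, view \(u_t\) as an estimator of the functional \(g_t(s)=-K_{\theta_s}x_t\) and apply the multivariate van Trees inequality conditionally along the trajectory. By \cref{ass:location-noise}, the Fisher information in \(s\) accumulated up to time \(t\) equals
\[
\sum_{k<t}\E\,\tr\!\bigl[J_w(A[v]x_k+B[v]u_k)(A[v]x_k+B[v]u_k)^\top\bigr].
\]
Writing \(u_k=-K_{\theta_s}x_k+\delta_k\), the integrand is \(\norm{\cZ_{\theta_s}v\,x_k+B[v]\delta_k}_{J_w}^2\). This gives the key \emph{exploration–exploitation tradeoff}. If \(\cZ_{\theta_0}v\neq0\), the information grows like \(I_t\lesssim t+\sum_k\E\norm{\delta_k}^2\). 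Van Trees then gives
\[
\E\norm{\delta_t}^2\gtrsim \frac{\norm{DK[v]}^2}{\mathcal J(\lambda)+I_t}\gtrsim \frac{1}{\sqrt T+t+R_t},
\]
and \(\E\norm{\delta_t}^2\gtrsim1/t\) integrated over \(t\in[\sqrt T,T/2]\) gives \(c\log T\). This uses that \(\E x_tx_t^\top\succeq W\) from noise injection, so \(\E\norm{\delta_t}^2\) bounds the projected error in direction \(DK[v]\). If \(\cZ_{\theta_0}v=0\), part 2 gives even more. For a general \(v\) the inequality still holds, since information is at most \(C(t+R_t)\). So regret is at least \(c\log T-C\).

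\textbf{Part 2 (square-root bound).} Now \(\cZ_{\theta_0}v=0\). Along \(\theta_s\) we have
\[
\cZ_{\theta_s}v=\cZ_{\theta_0}v-B[v](K_{\theta_s}-K_{\theta_0})=O(|s|),
\]
so the per-step information is at most \(C(s^2\E\norm{x_k}^2+\E\norm{\delta_k}^2)\). With \(|s|\le r_T\), this gives \(I_T\lesssim Tr_T^2+R_T=\varepsilon^2\sqrt T+R_T\). Use a two-point test \(s=\pm r_T\). The gains differ by \(\asymp r_T\) in a fixed direction, so any policy incurs loss \(\gtrsim r_T^2\) per step on at least one hypothesis, unless the hypotheses are distinguishable. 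The Hellinger distance between the two trajectory laws is bounded through the Fisher information along the segment, roughly \(r_T^2 I\). If \(R_T\le c\sqrt T\), the total information is \(O(\sqrt T)\), which is \(O(1)\) times \(r_T^{-2}\). Choosing \(\varepsilon\) small is not allowed, so rescale the test points to \(\pm\kappa r_T\) with \(\kappa\le1\). Le Cam's bound then gives \(\max R_T\gtrsim T\kappa^2r_T^2=c\sqrt T\). The same conclusion follows more cleanly from van Trees: \(\E\norm{\delta_t}^2\gtrsim 1/(\sqrt T+\sqrt T+R_T)\), which summed over \(T/2\) steps gives \(R_T\gtrsim T/(\sqrt T+R_T)\), i.e.\ \(R_T\gtrsim\sqrt T\).

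\textbf{Main obstacle.} The main obstacle is the rigorous van Trees step with a policy-dependent, state-multiplied estimand. The loss is \(\norm{(\widehat K-K_\theta)x_t}^2\) rather than a parameter loss. The fix is to condition on \(\mathcal F_t\) and test against the fixed direction \(z\) via \(\E[x_tx_t^\top\mid\mathcal F_{t-1}]\succeq W\). Handling the cross term in the information, \(\cZ_{\theta_s}v x_k+B[v]\delta_k\), needs a Cauchy–Schwarz split. We also need to verify that the weak-derivative density regularity justifies the differentiability in quadratic mean of the trajectory likelihood; this is taken from Appendix~\ref{app:info}.
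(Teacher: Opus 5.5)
\textbf{There is a genuine gap.} Your overall structure matches the paper's. You reduce regret to the stationary residual \(\delta_t=u_t+K_sx_t\) on \([0,T/2]\) via \eqref{eq:FH-PDI} and Riccati convergence. You bound trajectory information by \(C(t+R_T+1)\), and on an invisible line by \(C(R_T+\sqrt T+1)\), through the split \(A[v]x_k+B[v]u_k=\cZ_{\theta_s}v\,x_k+B[v]\delta_k\). You use a scaled prior with information \(\Theta(\sqrt T)\), then sum over \(t\in[\sqrt T,T/2]\) or solve a self-bounding inequality.

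The gap is the step you call the main obstacle, and your proposed fix does not close it. Van Trees needs an estimator that is a measurable function of the observations, and an estimand that depends on \(s\) alone. For a general causal policy, \(u_t\) is an arbitrary, possibly nonlinear and randomized, function of the past and of \(x_t\). So there is no ``implied'' \(\widehat K_t\) with \(u_t=-\widehat K_tx_t\) and \(\widehat K_t\) independent of \(w_t\). The bound \(\E\norm{\delta_t}^2\ge\lambda_{\min}(W)\E\fnorm{\widehat K_t-K_s}^2\) makes sense only for linear feedback with a predictable gain. The alternatives you suggest also fail:
\begin{itemize}
\item Treating \(-K_sx_t\) as the estimand fails because it depends on the data.
\item Conditioning on \(\mathcal F_t\) freezes \(x_t\), so \(\E[x_tx_t^\top\mid\mathcal F_{t-1}]\succeq W\) can no longer be used.
\item Conditioning on \(\mathcal F_{t-1}\) leaves \(u_t\) nonlinear in \(x_t\). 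Any best linear fit of \(u_t\) on \(x_t\) under the conditional law then depends on \(s\) through the mean \(A_sx_{t-1}+B_su_{t-1}\).
\end{itemize}

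\textbf{The missing idea is to pass to blocks.} On \(I=[\ell,b)\) with \(b\le2n\), the paper uses \(\widetilde K_I=-(\sum_{t\in I}u_tx_t^\top)V_I^\dagger\), which is a genuine estimator. On the event \(V_I\succeq c_gnI\), completing the square gives \(\fnorm{\widetilde K_I-K_s}^2\le(c_gn)^{-1}\sum_{t\in I}\norm{\delta_t}^2\). Projection onto a compact gain ball controls the complement; this is \cref{lem:block-risk}. The Gram event has two requirements:
\begin{itemize}
\item Anti-concentration of \(q^\top x_t\) given the past, uniformly over shifts. Your second-moment bound does not give this; it comes from the symmetrization argument of \cref{lem:location-smallball}.
\item The trace bound \(\E\tr V_I\le C(n+B_T+1)\le Cn\). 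Finite regret controls only this cumulative energy, not per-step second moments as you assert.
\end{itemize}
Van Trees is then applied to the scalar \(\ip{G}{\widehat K_I}_F/\fnorm{G}\). For part~1 this is done on dyadic blocks \([n_j,2n_j)\) with \(n_j\ge\sqrt T\). For part~2 it is done on \([\lfloor T/4\rfloor,\lfloor T/2\rfloor)\).

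Your two-point Le Cam route for part~2 could avoid an estimator altogether. Use \(\norm{u+a}^2+\norm{u+b}^2\ge\frac12\norm{a-b}^2\) pointwise. Then use the same small-ball lemma to bound the overlap-weighted \(\norm{(K_+-K_-)x_t}^2\). Part~1, however, cannot come from two-point tests, since a test at any single scale yields only a constant. The \(\log T\) arises from one prior aggregating constant losses across all scales, so the block estimator, or an equivalent device, is indispensable there.
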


The proposition is proved in Section~\ref{sec:lower-proof}, which also completes the proof of \cref{thm:phase}.

\subsection{From control loss to gain estimation}\label{sec:lower-reduction}
Both lower bounds compare the accuracy of estimating the optimal gain with the control loss of the policy generating the data. Small regret forces this estimation error to be small, whereas the available Fisher information limits how small it can be.

We consider policies with finite worst-case expected cumulative cost over \(\Theta_T(\theta_0,\varepsilon)\). Policies with infinite worst-case cost already satisfy the lower bounds, since \(V_T^\star\) is uniformly bounded on this set for each fixed \(T\).

Fix a unit direction \(v\in\R^d\), choose \(0<a<\varepsilon\) small enough that \(\{\theta_0+sv:|s|\le a\}\) remains in the fixed stabilizing neighborhood, let \(\theta_s=\theta_0+sv\), and restrict \(|s|\le aT^{-1/4}\). Write \(K_s=K_{\theta_s}\). For a fixed policy \(\pi\), let \(R_T(s)=R_T^\pi(\theta_s)\), \(\E_s\triangleq \E_{\theta_s}^\pi\), and \(\Pp_s\triangleq \Pp_{\theta_s}^\pi\). For a prior density \(\lambda\) on this scalar line, \(\E_{s\sim\lambda}\) denotes integration over \(s\) with density \(\lambda\). Write \(B_T(\lambda)\triangleq\E_{s\sim\lambda}R_T(s)\) for the resulting average regret.

The controller residual \(u_k+K_sx_k\) measures deviation from optimal stationary feedback. For \(t\le T/2\), the finite-horizon performance identity and Riccati convergence bound its expected cumulative squared norm by \(C(R_T(s)+1)\), and state energy by \(C(t+R_T(s)+1)\) (\cref{lem:prefix-energy}, Appendix~\ref{app:prefix-energy}).

Let \(I_t^\pi(s;v)\) denote the Fisher information for \(s\) in \((x_0,u_0,\ldots,u_{t-1},x_t)\), and let \(J(\lambda)\triangleq \int |\lambda'(s)|^2/\lambda(s)\,ds\) be the information of a scalar prior density \(\lambda\), with integration over \(\{\lambda>0\}\). Appendix~\ref{app:info} derives the trajectory information under this finite-cost condition.

\begin{lemma}\label{lem:information-growth}
Suppose \cref{ass:noise-moments,ass:regular,ass:location-noise} hold. For all sufficiently large \(T\), every policy considered above, every unit direction \(v\), every \(|s|\le aT^{-1/4}\), and every integer \(0\le t\le T/2\),
\begin{equation}
 I_t^\pi(s;v)\le C_I(t+R_T(s)+1).                            \label{eq:generic-info}
\end{equation}
Here \(C_I>0\) is independent of \(\pi,v,s,t,T\). If, in addition, \(v\in\ker\cZ_{\theta_0}\), then at \(t=\lfloor T/2\rfloor\),
\begin{equation}
 I_t^\pi(s;v)\le C_I(R_T(s)+\sqrt T+1).                     \label{eq:null-info}
\end{equation}
\end{lemma}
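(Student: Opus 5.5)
The plan is to start from the chain-rule formula for trajectory Fisher information in a location family; Appendix~\ref{app:info} derives it under the finite-cost condition. Conditional on \(\mathcal F_k\), the transition to \(x_{k+1}\) is the noise density shifted by \(A_{\theta_s}x_k+B_{\theta_s}u_k\), and the policy kernel for \(u_k\) (including \(\xi_k\)) does not depend on \(s\). The score of step \(k\) is therefore \(\psi(w_{k+1})^\top(A[v]x_k+B[v]u_k)\), and
\begin{equation*}
 I_t^\pi(s;v)=\sum_{k=0}^{t-1}\E_s\!\left[(A[v]x_k+B[v]u_k)^\top J_w(A[v]x_k+B[v]u_k)\right]
 \le \norm{J_w}\sum_{k=0}^{t-1}\E_s\norm{A[v]x_k+B[v]u_k}^2 .
\end{equation*}
Here \(J_w\) is finite by \cref{ass:location-noise}. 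The whole proof then reduces to bounding the second moments of the ``information direction'' \(A[v]x_k+B[v]u_k\).

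For \eqref{eq:generic-info}, I would write \(u_k=-K_sx_k+(u_k+K_sx_k)\). Then
\[
\norm{A[v]x_k+B[v]u_k}^2\le 2\norm{\cZ_{K_s}v}^2\norm{x_k}^2+2\norm{B[v]}^2\norm{u_k+K_sx_k}^2 ,
\]
and the coefficients are uniformly bounded over unit \(v\) and over \(|s|\le a\), since \(K_s\) is bounded on the stabilizing neighborhood. Summing over \(k<t\le T/2\), \cref{lem:prefix-energy} bounds the state energy by \(C(t+R_T(s)+1)\) and the cumulative residual by \(C(R_T(s)+1)\). Together these give \eqref{eq:generic-info} with \(C_I\) uniform in \(\pi,v,s,t,T\).

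For \eqref{eq:null-info} with \(v\in\ker\cZ_{\theta_0}\), I would instead expand around the nominal gain: \(A[v]x_k+B[v]u_k=\cZ_{K_0}v\,x_k+B[v](u_k+K_0x_k)\), and \(\cZ_{K_0}v=0\). Writing \(u_k+K_0x_k=(u_k+K_sx_k)+(K_0-K_s)x_k\) gives
\[
\norm{A[v]x_k+B[v]u_k}^2\le C\norm{u_k+K_sx_k}^2+C\norm{K_s-K_0}^2\norm{x_k}^2 .
\]
Local Lipschitz continuity of \(\theta\mapsto K_\theta\) gives \(\norm{K_s-K_0}\le C|s|\le CaT^{-1/4}\), so the squared gain shift is \(O(T^{-1/2})\). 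Summing to \(t=\lfloor T/2\rfloor\) and applying \cref{lem:prefix-energy} then yields
\[
C(R_T(s)+1)+CT^{-1/2}(T+R_T(s)+1)\le C_I(R_T(s)+\sqrt T+1).
\]

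The main obstacle is not the algebra but justifying the information formula. One must show that the trajectory family is differentiable in quadratic mean in \(s\) for an arbitrary, possibly randomized, causal policy, using only the weak-gradient condition on \(\sqrt p\) rather than a smooth positive density. The route I would try is to prove DQM for one step: the location shift of \(f=\sqrt p\) with \(\int\norm{\nabla f}^2<\infty\) is DQM, by a standard translation-continuity argument in \(L^2\). The step-wise scores would then be chained by conditioning on \(\mathcal F_k\), and the second moments \(\E_s\norm{x_k}^2,\E_s\norm{u_k}^2\) would be finite by the finite-cost restriction. That finiteness is exactly what makes the sum of conditional informations equal the joint information. Since Appendix~\ref{app:info} is stated to supply this derivation, I would invoke it and concentrate the proof on the two energy bounds above.
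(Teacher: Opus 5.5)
Your proposal is correct and follows the paper's proof. You invoke the trajectory information identity \eqref{eq:direction-FI} with \cref{lem:prefix-energy} for the generic bound; the paper bounds \(\norm{x_k}^2+\norm{u_k}^2\) directly via \eqref{eq:energy-prefix} rather than splitting \(u_k\) around \(K_s\), which is an inessential difference. For \(v\in\ker\cZ_{\theta_0}\) you use exactly the split \eqref{eq:null-score-split} with \(\norm{K_0-K_s}=O(T^{-1/4})\), and your DQM justification matches \cref{lem:sequential-DQM}.
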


\begin{proof}
The exact information identity in Appendix~\ref{app:info} and \eqref{eq:energy-prefix} give \eqref{eq:generic-info}. For \(v\in\ker\cZ_{\theta_0}\), affineness gives \(A[v]=B[v]K_0\), and the displacement entering the score splits as
\begin{equation}
 \begin{aligned}
 A[v]x_k+B[v]u_k
 &=B[v](u_k+K_sx_k)\\
 &\quad+B[v](K_0-K_s)x_k.
 \end{aligned}                                               \label{eq:null-score-split}
\end{equation}
The first term is \(B[v]\) times the deviation from optimal stationary feedback, so \eqref{eq:residual-prefix} bounds its cumulative expected squared norm by \(C(R_T(s)+1)\). For \(t\le T/2\), the gain bound \(\norm{K_0-K_s}=O(T^{-1/4})\) and \eqref{eq:energy-prefix} bound the corresponding quantity for the second term by \(CT^{-1/2}(t+R_T(s)+1)\). Combining these estimates in the information identity \eqref{eq:direction-FI} and using \(T^{-1/2}\le1\) gives \eqref{eq:null-info} at \(t=\lfloor T/2\rfloor\).
\end{proof}

For a block of integer times \(I=[\ell,b)\subset[1,T/2]\), let \(n=b-\ell\),
\begin{equation}
 V_I\triangleq \sum_{t\in I}x_tx_t^\top,
 \qquad
 \widetilde K_I\triangleq -\Big(\sum_{t\in I}u_tx_t^\top\Big)V_I^\dagger. \label{eq:block-LS}
\end{equation}
Project \(\widetilde K_I\) onto a fixed compact gain ball \(\cK\) containing all local \(K_s\), and call the result \(\widehat K_I\).

For sufficiently long blocks with \(b\le2n\) and \(\E_{s\sim\lambda}R_T(s)\le n\), \cref{lem:block-risk} in Appendix~\ref{app:blockgram} gives
\begin{align*}
 &\E_{s\sim\lambda}\E_s\sum_{t\in I}\norm{u_t+K_sx_t}^2\\
 &\qquad\ge c_B n\,\E_{s\sim\lambda}\E_s\fnorm{\widehat K_I-K_s}^2
 -\frac{C_B}{n}.
\end{align*}

Appendix~\ref{app:info} proves the one-dimensional van Trees inequality \cite{vantrees2001,gill1995applications} and constructs the smooth prior \(\lambda_T\) in \eqref{eq:scaled-prior} on the local line, with prior information \(O(\sqrt T)\). If \(G=DK_{\theta_0}[v]\neq0\), set \(g(s)=\ip{G}{K_s}_F/\fnorm{G}\). Since \(g'(0)=\fnorm{G}>0\), continuity gives \(g'(s)\ge\fnorm{G}/2\) throughout the prior support for sufficiently large \(T\). Applying van Trees to \(g\) and the estimator \(\ip{G}{\widehat K_I}_F/\fnorm{G}\) gives, for constants \(c,C>0\),
\begin{equation}
 \E_{s\sim\lambda_T}\E_s\fnorm{\widehat K_I-K_s}^2
 \ge \frac{c}{\E_{s\sim\lambda_T} I_b^\pi(s;v)+C\sqrt T}. \label{eq:controller-vt}
\end{equation}

\subsection{Regret lower bounds and phase classification}\label{sec:lower-proof}

\begin{proof}[Proof of \cref{thm:log-lower}]
For part~1, choose a unit direction with \(DK_{\theta_0}[v]\neq0\). If \(B_T(\lambda_T)\ge\sqrt T\), the logarithmic lower bound follows immediately. Otherwise, set \(n_\star\triangleq 2^{\lceil\log_2\sqrt T\rceil}\) and take disjoint blocks \(I_j=[n_j,2n_j)\), where \(n_j=2^jn_\star\), for integers \(j\ge0\) with \(2n_j\le T/2\). Since \(\sqrt T\le n_\star<2\sqrt T\), every block has length \(n_j>B_T(\lambda_T)\), and there are at least \(c\log T\) blocks for sufficiently large \(T\). Averaging \eqref{eq:generic-info} over \(\lambda_T\) bounds the trajectory information at time \(2n_j\) by \(C(n_j+B_T(\lambda_T)+1)=O(n_j)\). The prior information is also \(O(\sqrt T)=O(n_j)\), so \eqref{eq:controller-vt} bounds the average estimation risk below by \(c/n_j\).

For sufficiently large \(T\), all blocks exceed the fixed length threshold in \cref{lem:block-risk}, which gives average control loss at least \(c_1-C_B/n_j\ge c_1/2\) per block, with \(c_1>0\) independent of the block and policy. Summing over the blocks and applying \cref{lem:prefix-energy} yields \(B_T(\lambda_T)\ge c\log T-C\). Worst-case regret is at least this prior average.

For part~2, use the prior \eqref{eq:scaled-prior} on the line in \eqref{eq:harmful-direction}. Take the block \(I=[\lfloor T/4\rfloor,\lfloor T/2\rfloor)\), whose length is \(n\asymp T\). If \(B_T(\lambda_T)\ge n\), the result is immediate. Otherwise \cref{lem:block-risk} applies, and \cref{lem:information-growth} gives
\begin{equation*}
 \E_{s\sim\lambda_T} I_{\lfloor T/2\rfloor}^\pi(s;v)+J(\lambda_T)
 \le C(B_T(\lambda_T)+\sqrt T+1).
\end{equation*}
Combining \eqref{eq:controller-vt}, \cref{lem:block-risk}, and \cref{lem:prefix-energy} yields
\begin{equation}
 B_T(\lambda_T)+C
 \ge \frac{cT}{B_T(\lambda_T)+\sqrt T+C}.                         \label{eq:self-bound-hard}
\end{equation}
Solving the quadratic inequality \eqref{eq:self-bound-hard} gives \(B_T(\lambda_T)\ge c_0\sqrt T-C\) for some \(c_0>0\). Worst-case regret is at least this prior average. In both cases, the constants are independent of \(\pi\), so taking the infimum over all admissible policies and absorbing the additive constants for sufficiently large \(T\) proves \eqref{eq:log-lower} and \eqref{eq:sqrt-lower}.
\end{proof}

\begin{proof}[Proof of \cref{thm:phase}]
The bounded regret case follows from the nominal controller bound in \cref{prop:nominal-upper} of Appendix~\ref{app:nominal-bounds}. Under CI and \(DK_{\theta_0}\neq0\), combine \cref{thm:log-upper,thm:log-lower}. If CI fails, \eqref{eq:harmful-direction} holds for some \(v\). Combine \cref{thm:sqrt-lower} with the nominal \(O(\sqrt T)\) upper bound in \cref{prop:nominal-upper}.
\end{proof}

\section{Illustrations}\label{sec:illustrations}

\subsection{Harmless and harmful invisible directions}\label{sec:invisible-examples}
We compare two perturbations invisible under fixed nominal feedback \(u_t=-K_0x_t\): both preserve the closed-loop matrix and trajectory law, but only one preserves the optimal gain. Consider the nominal system and costs
\begin{align}
 A_0&=\diag(0.6,0),\qquad B_0=\begin{bmatrix}1\\0\end{bmatrix},\notag\\
 Q&=\diag(0.82,1),\qquad R=1.                             \label{eq:example-QR}
\end{align}
The corresponding Riccati solution, optimal gain, and closed-loop matrix are
\begin{equation*}
 P_0=I_2,\quad K_0=\begin{bmatrix}0.3&0\end{bmatrix},\quad
 F_0=\diag(0.3,0).
\end{equation*}
Here \(S_0=R+B_0^\top P_0B_0=2\). We vary \(A_s=A_0+sE\) and \(B_s=B_0+sD\), where \(s=0\) is nominal and \(E,D\) specify the changes in \(A,B\), respectively.

For the harmless direction, let
\begin{equation*}
 D_n=\begin{bmatrix}0\\1\end{bmatrix},
 \qquad E_n=D_nK_0=\begin{bmatrix}0&0\\0.3&0\end{bmatrix},
\end{equation*}
Then
\begin{equation*}
 E_n-D_nK_0=0,
 \qquad D_n^\top P_0F_0=0.
\end{equation*}
Thus the scalar parameter direction satisfies \(\cZ_0 1=0\), \(DK_0[1]=0\), and \cref{thm:exact-fiber} gives
\begin{equation*}
 P_s=P_0,\qquad K_s=K_0,
 \qquad A_s-B_sK_0=F_0
\end{equation*}
for every \(s\in\R\). Thus CI holds at \(s=0\), despite changes in the plant.

For the harmful direction, let
\begin{equation*}
 D_h=\begin{bmatrix}1\\0\end{bmatrix},
 \qquad E_h=D_hK_0=\begin{bmatrix}0.3&0\\0&0\end{bmatrix}.
\end{equation*}
Now take \(A_s=A_0+sE_h\) and \(B_s=B_0+sD_h\). Again \(E_h-D_hK_0=0\), so the closed-loop matrix under \(K_0\) remains \(F_0\). However, the optimal gain changes with \(s\): \cref{thm:directional} gives
\begin{equation*}
 DK_0[1]
 =S_0^{-1}D_h^\top P_0F_0
 =\begin{bmatrix}0.15&0\end{bmatrix}\neq0.
\end{equation*}
Thus CI fails at \(s=0\). Under \cref{ass:location-noise}, this family has local minimax regret of order \(\sqrt T\). The left panel of Figure~\ref{fig:joint-geometry} compares the optimal gains along these two invisible directions.

\subsection{A family with logarithmic local minimax regret}\label{sec:nonflat-example}
The harmless family above has a constant optimal gain. To obtain a CI example with a nonzero gain derivative at the nominal point, add a visible direction that changes only \(A\),
\begin{equation*}
 E_m=\begin{bmatrix}1&0\\0.5&0\end{bmatrix}.
\end{equation*}
Let \(\mu\) measure the added variation and \(\nu\) the original variation. The resulting family is
\begin{equation*}
 A_{\mu,\nu}=A_0+\mu E_m+\nu E_n,
 \qquad B_{\mu,\nu}=B_0+\nu D_n,
\end{equation*}
Write \(m=(1,0)^\top\) and \(n=(0,1)^\top\) for the parameter directions. We have \(\ker\cZ_0=\operatorname{span}\{n\}\) and \(DK_0[n]=0\), while Appendix~\ref{app:example} gives \(DK_0[m]\neq0\). Thus CI holds at \((0,0)\), where the gain derivative is nonzero, with local minimax rate \(\Theta(\log T)\) under \cref{ass:location-noise}.

Along \(\mu=0\), varying \(\nu\) leaves the optimal gain unchanged. When \(\mu\neq0\), however, ignoring \(\nu\) incurs a gain error of order \(|\mu\nu|\), matching the bound in \cref{thm:cross-fiber}:
\begin{equation}
 \norm{K_{\mu,\nu}-K_{\mu,0}}=\Theta(|\mu\nu|)             \label{eq:example-cross}
\end{equation}
near the origin. The upper bound follows from \cref{thm:cross-fiber}, and the lower bound from the nonzero mixed derivative in Appendix~\ref{app:example}. Figure~\ref{fig:joint-geometry} illustrates this interaction and the directional sensitivities.

\begin{figure}[htbp]
 \centering
 \includegraphics[width=\textwidth]{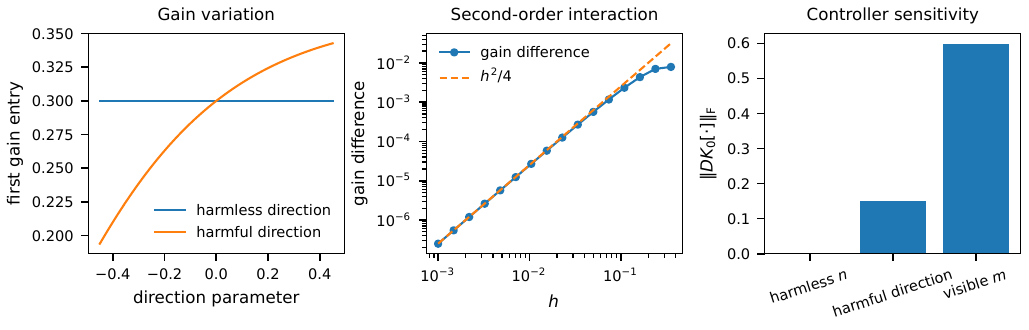}
 \caption{Optimal gain for perturbations of the same nominal plant. Left: the first gain entry along the harmless and harmful invisible directions. Middle: \(\norm{K_{h,h}-K_{h,0}}\) in the two-parameter family, compared with its leading term \(h^2/4\) as \(h\to0\). Right: norms of the nominal gain derivatives for the harmless invisible, harmful invisible, and visible directions.}
 \label{fig:joint-geometry}
\end{figure}

\subsection{A memoryless auxiliary subsystem}\label{sec:auxiliary-example}
Here the optimal controller depends on an unknown parameter \(a\) but not on another unknown parameter \(b\). Write \(x_t=(z_t,y_t)^\top\), with superscripts \((1),(2)\) denoting input and noise components, and consider
\begin{align*}
 z_{t+1}&=az_t+u_t^{(1)}+w_{t+1}^{(1)},\\
 y_{t+1}&=bu_t^{(2)}+w_{t+1}^{(2)},\qquad Q=R=I_2.        
\end{align*}
The first channel is a scalar LQR with an unknown state-transition coefficient. The second has no state memory: its current state does not affect its next state, and a nonzero input only increases expected cost. In matrix form, \(A=\diag(a,0)\) and \(B=\diag(1,b)\). Solving the DARE for the two decoupled subsystems gives
\begin{equation*}
 P=\diag(p(a),1),\qquad K=\diag(k(a),0),
\end{equation*}
where \(p(a)=(a^2+\sqrt{a^4+4})/2\) and \(k(a)=ap(a)/(1+p(a))\), with \(k'(a)>0\). Under any fixed nominal optimal feedback, changes in \(a\) change the first channel's closed-loop dynamics, whereas changes in \(b\) are invisible because \(u_t^{(2)}=0\). Changes in \(b\) also leave the optimal gain unchanged. Thus CI holds at every parameter, although the plant is not identifiable under optimal feedback.

Use the FCE variant with fixed projection radius in Remark~\ref{rem:radius-policy} to estimate only \(a\). Its controllers set \(u_t^{(2)}=0\), so both the error term in \eqref{eq:nuisance-identity} and the gain error from ignoring \(b\) vanish. The proof of \eqref{eq:radius-CI} gives \(C[1+\log(1+Tr^2)]\) regret without the \(Tr^4\) term, with a matching lower bound under \cref{ass:location-noise}. Thus fixed sufficiently small neighborhoods of a known nominal model admit logarithmic regret even though \(A\) is singular.

\subsection{Numerical illustration of regret rates}
We use the families in Sections~\ref{sec:invisible-examples} and \ref{sec:nonflat-example} to illustrate the predicted loss growth and the distinction between parameter and controller error. The simulations use the costs in \eqref{eq:example-QR}, \(x_0=0\), and Gaussian process noise with \(W=0.2I_2\). For \(T=2^9,\ldots,2^{14}\), we take \((\mu,\nu)=(0.8,0.6)T^{-1/4}\) in the CI family and \(s=T^{-1/4}\) in the harmful family. FCE uses \(N_0=32\), \(c_\rho=1.2\), \(\bar\rho=0.45\), and the Frobenius ball centered at \(F_0\) with radius \(0.75\) for matrix projection.

The plotted performance measure is the cumulative stationary Bellman loss \(\sum_{t<T}\norm{u_t+K_\theta x_t}_{S_\theta}^2\), averaged over runs. The stationary Bellman identity relates its expectation to average-cost regret through the boundary term \(x_0^\top P_\theta x_0-\E_\theta^\pi[x_T^\top P_\theta x_T]\). Under the uniform state moment bound \eqref{eq:uniform-state}, this term is bounded. The comparison with finite-horizon regret then follows from \cref{lem:benchmark}.

Figure~\ref{fig:phase-scaling} shows mean Bellman loss approximately affine in \(\log T\) for FCE in the CI family and in \(\sqrt T\) for nominal feedback in the harmful family. Nominal feedback has zero Bellman loss along the harmless direction because \(K_s=K_0\).

\begin{figure}[htbp]
 \centering
 \includegraphics[width=\textwidth]{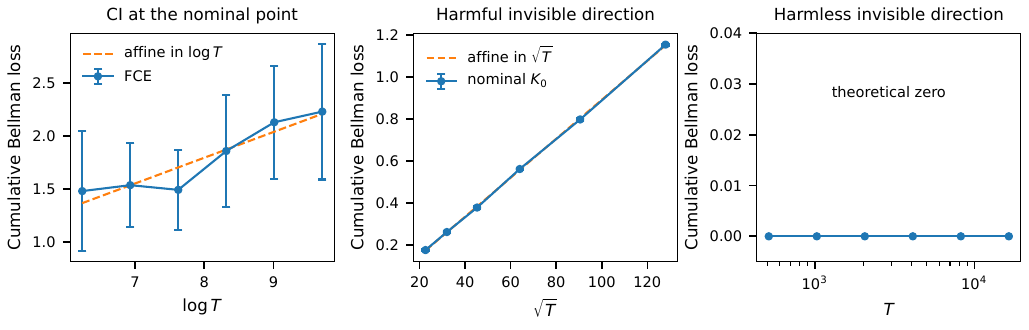}
 \caption{Cumulative stationary Bellman loss in the three local regimes under \(T^{-1/4}\) perturbations. The first two panels show means and twice the Monte Carlo standard error over 20 runs. Dashed lines are least-squares affine fits to the plotted means. The third shows the theoretical zero loss along the harmless invisible direction.}
 \label{fig:phase-scaling}
\end{figure}

In Figure~\ref{fig:controller-plant}, the full parameter error approaches the magnitude of the invisible coordinate, which FCE does not estimate. The errors in the visible coordinate and the controller nevertheless decrease.

\begin{figure}[htbp]
 \centering
 \includegraphics[width=0.72\textwidth]{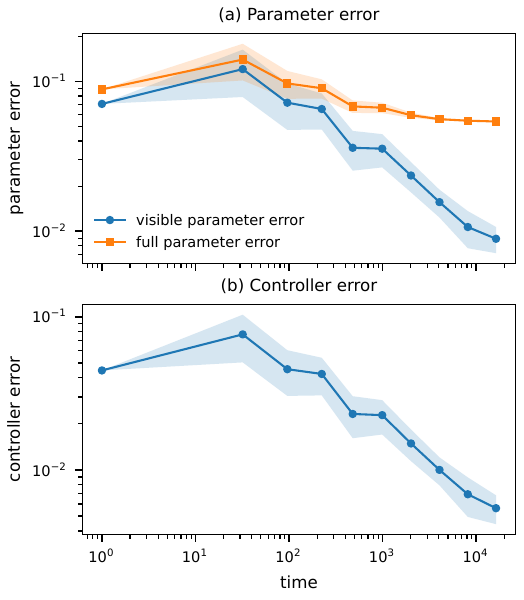}
 \caption{Errors for the family in Section~\ref{sec:nonflat-example}, with \((\mu,\nu)=(0.8,0.6)T^{-1/4}\) and \(T=16384\), averaged over 30 runs. The visible parameter, full parameter, and controller errors are \(|\widehat\mu_j-\mu|\), \(\sqrt{(\widehat\mu_j-\mu)^2+\nu^2}\), and \(\fnorm{K_{\widehat\mu_j,0}-K_{\mu,\nu}}\), respectively. Shaded bands are twice the Monte Carlo standard error.}
 \label{fig:controller-plant}
\end{figure}

\FloatBarrier
\section{Conclusion}\label{sec:conclusion}
This paper characterized local minimax regret in structured adaptive linear--quadratic regulation through controller identifiability. For affine models with jointly unknown state and input matrices, this condition separates logarithmic from square-root regret in the shrinking neighborhoods of a known nominal model considered here, under the stated regularity assumptions and a nonzero nominal gain derivative. A vanishing derivative permits bounded regret. The characterization identifies when model differences invisible under nominal optimal feedback matter for control. Under controller identifiability, sufficiently small invisible perturbations leave the optimal gain unchanged, allowing the proposed certainty-equivalent policy to attain logarithmic regret by estimating only the visible parameter component.

\appendix

\section{Riccati Equations and Controller Identifiability}\label{app:riccati}
\subsection{Riccati derivatives and invariance}
Derivatives of \(P_\theta\) and \(F_\theta\) use the same directional convention as \(DK_\theta\).
\begin{lemma}\label{thm:directional}
Fix an interior parameter \(\theta\) with \((A_\theta,B_\theta)\) stabilizable and \((Q^{1/2},A_\theta)\) observable, and \(v\in\R^d\). Let \(Z=\cZ_\theta v\).
Then
\begin{align}
 DP_\theta[v]
 &=F_\theta^\top DP_\theta[v]F_\theta
   +Z^\top P_\theta F_\theta+F_\theta^\top P_\theta Z,       \label{eq:DP}\\
 S_\theta DK_\theta[v]
 &=B[v]^\top P_\theta F_\theta+B_\theta^\top DP_\theta[v]F_\theta
   +B_\theta^\top P_\theta Z.                               \label{eq:DK}
\end{align}
If \(v\in\ker\cZ_\theta\), then
\begin{equation}
 DP_\theta[v]=0,
 \qquad
 S_\theta DK_\theta[v]=B[v]^\top P_\theta F_\theta.          \label{eq:null-derivatives}
\end{equation}
\end{lemma}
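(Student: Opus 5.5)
The plan is to differentiate the Bellman form \eqref{eq:bellman-riccati} and the gain definition along the curve \(\theta+sv\). The key structural fact is that \(K_\theta\) minimizes the one-step Bellman quadratic, so terms involving \(DK_\theta[v]\) drop out of the derivative of \(P\).

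\emph{Smoothness.} First I will justify that \(s\mapsto P_{\theta+sv}\) is differentiable near \(s=0\). By the stated assumptions, \(P_\theta\) is the stabilizing solution of the DARE and \(F_\theta\) is Schur. Write the DARE as \(\Phi(P,\theta)=0\), with \(\Phi\) smooth in \((P,\theta)\) near \((P_\theta,\theta)\). Its derivative in \(P\), after eliminating the gain by the envelope argument below, is \(X\mapsto X-F_\theta^\top XF_\theta\). This map is invertible because \(F_\theta\) is Schur. The implicit function theorem then gives a smooth branch of stabilizing solutions, and by uniqueness of the stabilizing solution (stabilizability and observability persist nearby) this branch coincides with \(P_{\theta+sv}\). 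Smoothness of \(K\) and \(F\) follows from \eqref{eq:optimal-quantities}, since \(S_\theta\succ0\).

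\emph{Derivative of \(P\).} Write \(P=Q+K^\top RK+(A-BK)^\top P(A-BK)\) and differentiate at \(s=0\). The derivative of \(A-BK\) is \(A[v]-B[v]K_\theta-B_\theta DK=Z-B_\theta DK\). The terms containing \(DK\) collect to
\[
DK^\top\bigl(RK_\theta-B_\theta^\top P_\theta F_\theta\bigr)+(\cdot)^\top .
\]
This vanishes because \(RK_\theta-B_\theta^\top P_\theta F_\theta=S_\theta K_\theta-B_\theta^\top P_\theta A_\theta=0\), which is the first-order optimality (stationarity) condition for \(K_\theta\). What remains is \eqref{eq:DP}.

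\emph{Derivative of \(K\).} Differentiate \(S K=B^\top PA\), which is equivalent to \(RK=B^\top PF\), i.e., \((R+B^\top PB)K=B^\top PA\). A cleaner route is to differentiate \(RK=B^\top P(A-BK)\):
\[
R\,DK=B[v]^\top P F+B^\top DP\,F+B^\top P(Z-B\,DK).
\]
Moving \(B^\top PB\,DK\) to the left gives \(S_\theta DK\) and yields exactly \eqref{eq:DK}.

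\emph{Invisible directions.} If \(v\in\ker\cZ_\theta\), then \(Z=0\), and \eqref{eq:DP} reduces to the Stein equation \(X=F_\theta^\top XF_\theta\). Since \(F_\theta\) is Schur, this equation has the unique solution \(X=0\), so \(DP_\theta[v]=0\). Substituting \(Z=0\) and \(DP_\theta[v]=0\) into \eqref{eq:DK} then gives \eqref{eq:null-derivatives}.

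The main obstacle is the differentiability and branch-identification step; the rest is bookkeeping. To handle it, I would apply the implicit function theorem to the map \(P\mapsto P-Q-A^\top PA+A^\top PB(R+B^\top PB)^{-1}B^\top PA\). Its Fréchet derivative in \(P\) at the stabilizing solution is \(X\mapsto X-F^\top XF\), by the same envelope cancellation. The resulting branch stays positive definite and stabilizing by continuity, which identifies it with \(P_{\theta+sv}\) via uniqueness \cite{hager1976convergence}.
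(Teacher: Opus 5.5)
Your proposal is correct and matches the paper's proof. You differentiate the Bellman identity and the stationarity identity \(RK_\theta=B_\theta^\top P_\theta F_\theta\), cancel the \(DK_\theta[v]\) terms by stationarity, and, when \(Z=0\), conclude \(DP_\theta[v]=0\) from the Stein equation with Schur stable \(F_\theta\); your implicit-function-theorem smoothness step, with derivative \(X\mapsto X-F_\theta^\top XF_\theta\), is the same argument the paper gives in \cref{lem:benchmark} and cites here.
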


\begin{proof}
The smoothness established in \cref{lem:benchmark} permits differentiation of the Bellman identity \eqref{eq:bellman-riccati} and the stationarity identity
\begin{equation*}
 RK_\theta=B_\theta^\top P_\theta F_\theta.
\end{equation*}
Since
\(
DF_\theta[v]=Z-B_\theta DK_\theta[v]
\), differentiating the Bellman identity and canceling the terms involving \(DK_\theta[v]\) by stationarity gives \eqref{eq:DP}. Differentiating the stationarity identity gives \eqref{eq:DK}. If \(Z=0\), then \eqref{eq:DP} reduces to \(DP_\theta[v]=F_\theta^\top DP_\theta[v]F_\theta\), and Schur stability implies \(DP_\theta[v]=0\).
\end{proof}

\begin{lemma}\label{thm:exact-fiber}
Fix an interior parameter \(\theta\) with \((A_\theta,B_\theta)\) stabilizable and \((Q^{1/2},A_\theta)\) observable. If \(v\in\ker\cZ_\theta\) and \(DK_\theta[v]=0\), then for every scalar \(s\) such that \(\theta+sv\in\Theta\) and \((Q^{1/2},A_{\theta+sv})\) is observable,
\begin{equation*}
 P_{\theta+sv}=P_\theta,\quad
 K_{\theta+sv}=K_\theta,\quad
 F_{\theta+sv}=F_\theta.
\end{equation*}
\end{lemma}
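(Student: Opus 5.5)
The plan is to verify directly that the nominal triple \((P_\theta,K_\theta,F_\theta)\) satisfies the Riccati equation for the perturbed model \(\theta'=\theta+sv\), and then to invoke uniqueness of the stabilizing DARE solution. Since \(v\in\ker\cZ_\theta\), affineness gives \(A[v]=B[v]K_\theta\). Hence
\[
A_{\theta'}-B_{\theta'}K_\theta=A_\theta-B_\theta K_\theta+s(A[v]-B[v]K_\theta)=F_\theta
\]
for every \(s\), with no smallness required. By \eqref{eq:null-derivatives} and \(DK_\theta[v]=0\), we have \(B[v]^\top P_\theta F_\theta=S_\theta DK_\theta[v]=0\). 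This identity is the crux; it is a one-step exact consequence rather than a first-order statement, because the parameter enters affinely.

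The first step is to check the stationarity identity for \(\theta'\) with the candidate \(P=P_\theta\) and \(K=K_\theta\). The DARE optimality condition can be written as \(RK=B^\top P(A-BK)\), which is equivalent to \(K=(R+B^\top PB)^{-1}B^\top PA\). For \(\theta'\) we have
\[
B_{\theta'}^\top P_\theta(A_{\theta'}-B_{\theta'}K_\theta)=(B_\theta+sB[v])^\top P_\theta F_\theta=B_\theta^\top P_\theta F_\theta+s\,B[v]^\top P_\theta F_\theta=RK_\theta,
\]
using the nominal stationarity identity and \(B[v]^\top P_\theta F_\theta=0\). Thus \(K_\theta=(R+B_{\theta'}^\top P_\theta B_{\theta'})^{-1}B_{\theta'}^\top P_\theta A_{\theta'}\). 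Note that \(R+B_{\theta'}^\top P_\theta B_{\theta'}\succ0\) holds because \(P_\theta\succeq0\).

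The second step is to check the Bellman form \eqref{eq:bellman-riccati} for \(\theta'\). Since the closed loop under \(K_\theta\) is still \(F_\theta\), the identity \(P_\theta=Q+K_\theta^\top RK_\theta+F_\theta^\top P_\theta F_\theta\) is unchanged. Combined with the stationarity identity, this shows that \(P_\theta\) solves the DARE for \((A_{\theta'},B_{\theta'})\) and that its associated gain is \(K_\theta\). The standard algebra converting the Bellman form plus stationarity back into the DARE should be straightforward.

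The remaining step is uniqueness, and I expect it to be the main obstacle, since the hypothesis gives only observability at \(\theta'\) and not stabilizability. The candidate \(P_\theta\) is positive definite and its closed loop \(F_\theta\) is Schur stable. Therefore \(K_\theta\) stabilizes \((A_{\theta'},B_{\theta'})\), so that pair is stabilizable; together with the assumed observability of \((Q^{1/2},A_{\theta'})\), \(P_{\theta'}\) is well defined as in \cite{hager1976convergence}. Under these conditions the DARE has a unique positive semidefinite solution, and it is the stabilizing one. Alternatively, \(P_\theta\) equals the cost-to-go of the stabilizing feedback \(K_\theta\), and a stabilizing DARE solution is the optimal value, which is unique. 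Either route gives \(P_{\theta'}=P_\theta\), and consequently \(K_{\theta'}=K_\theta\) and \(F_{\theta'}=F_\theta\). No smallness of \(s\) is used anywhere, which matches the statement holding for every admissible \(s\).
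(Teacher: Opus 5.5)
Your proposal is correct and follows essentially the same route as the paper. The paper's proof also runs: the closed loop \(A_{\theta+sv}-B_{\theta+sv}K_\theta=F_\theta\) is exact by affineness; \(B[v]^\top P_\theta F_\theta=0\) from \eqref{eq:null-derivatives} gives stationarity at \(\theta+sv\); the Bellman identity carries over unchanged; Schur stability of \(F_\theta\) supplies stabilizability; and observability together with the uniqueness in \cref{lem:benchmark} identifies \((P_\theta,K_\theta)\) with the optimal pair.
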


\begin{proof}
Affineness and \(A[v]-B[v]K_\theta=0\) imply
\(
A_{\theta+sv}-B_{\theta+sv}K_\theta=F_\theta
\)
exactly.  By \cref{thm:directional}, \(DK_\theta[v]=0\) also gives \(B[v]^\top P_\theta F_\theta=0\), so
\(
B_{\theta+sv}^\top P_\theta F_\theta=RK_\theta
\).
The same \((P_\theta,K_\theta)\) satisfies the Bellman and stationarity identities at \(\theta+sv\). The unchanged Schur stable closed-loop matrix ensures stabilizability of the system at this parameter. Observability at the target and uniqueness in \cref{lem:benchmark} identify this pair with the optimal solution there.
\end{proof}

\subsection{Proofs of the structural results}\label{app:quotient-proof}\label{app:physical-proof}\label{app:cross-fiber-proof}
\begin{proof}[Proof of \cref{thm:quotient}]
Use the spaces defined in \eqref{eq:split-theta}.
Choose a sufficiently small ball about \(\theta\) where stabilizability and observability hold. Under CI at \(\theta\), \cref{thm:exact-fiber} gives the second statement. Conversely, for every \(n\in\ker\cZ_\theta\), the second statement gives \(K_{\theta+sn}=K_\theta\) for all sufficiently small \(s\), so
\begin{equation*}
 DK_\theta[n]=\left.\frac{d}{ds}K_{\theta+sn}\right|_{s=0}=0.
\end{equation*}
Thus CI holds, proving equivalence of the first two statements.

Suppose the first statement holds. Write \(\theta'-\theta=n+m\) with \(n\in\cN_\theta\), \(m\in\cM_\theta\). Orthogonality gives \(\norm{n}\le\norm{\theta'-\theta}\), so \(\theta+n\) remains in the chosen ball. By \cref{thm:exact-fiber}, \(K_{\theta+n}=K_\theta\). Injectivity of \(\cZ_\theta|_{\cM_\theta}\) in finite dimensions gives \(\norm{m}\le c\fnorm{\cZ_\theta m}\) for some \(c>0\) (trivially if \(\cM_\theta=\{0\}\)). Local smoothness on a sufficiently small closed ball therefore gives constants \(L,C>0\) such that
\begin{align*}
 \norm{K_{\theta'}-K_\theta}
 &=\norm{K_{\theta+n+m}-K_{\theta+n}}\\
 &\le L\norm{m}\le C\fnorm{\cZ_\theta m}.
\end{align*}
 Since \(\cZ_\theta m=\cZ_\theta(\theta'-\theta)\), this proves the third statement.

Conversely, the third statement gives \(K_{\theta+sn}=K_\theta\) for \(n\in\cN_\theta\) and small \(s\). Differentiating gives CI at \(\theta\).
\end{proof}

\begin{proof}[Proof of \cref{cor:physical-identifiability}]
By \eqref{eq:null-derivatives}, every \(v\in\cN_\theta\) satisfies \(B[v]^\top P_\theta F_\theta=0\) under CI at \(\theta\). The map \(v\mapsto B[v]\) on \(\cN_\theta\) has kernel \(\mathcal D\), since \(A[v]=B[v]K_\theta\), and each column of \(B[v]\) lies in \(\ker(F_\theta^\top P_\theta)\). Rank--nullity and \(P_\theta\succ0\) give
\begin{align*}
 \dim\cN_\theta-\dim\mathcal D
 &\le n_u\dim\ker(F_\theta^\top P_\theta)\\
 &=n_u(n_x-\rank F_\theta).
\end{align*}
Stationarity gives
\begin{equation*}
 A_\theta=(I+B_\theta R^{-1}B_\theta^\top P_\theta)F_\theta.
\end{equation*}
The prefactor is similar to \(I+P_\theta^{1/2}B_\theta R^{-1}B_\theta^\top P_\theta^{1/2}\succ0\), hence invertible, so \(\rank F_\theta=\rank A_\theta\). This proves \eqref{eq:physical-nullity}. If \(A_\theta\) is nonsingular, the right-hand side is zero, and \(\mathcal D\subseteq\cN_\theta\) gives \(\cN_\theta=\mathcal D\). Conversely, directions in \(\mathcal D\) leave the gain unchanged, so \(\cN_\theta=\mathcal D\) implies CI.
\end{proof}

\begin{proof}[Proof of \cref{thm:cross-fiber}]
We write \(D^2K_\theta[v,z]\triangleq \left.\frac{d}{ds}DK_{\theta+sz}[v]\right|_{s=0}\) for the second derivative in directions \(v,z\in\R^d\), and likewise for \(P_\theta\).
Choose a closed ball of radius \(r_2>0\) about \(\theta_0\) within the region of stabilizability and observability, on which both second derivatives are bounded. If \(\norm{\mu}+\norm{\nu}\le r_2\), then \(\theta_0+s\mu+t\nu\) lies in this ball for all \(s,t\in[0,1]\). Applying the fundamental theorem of calculus twice yields
\begin{align*}
 &K_{\theta_0+\mu+\nu}-K_{\theta_0+\mu}-K_{\theta_0+\nu}+K_{\theta_0}\\
 &\qquad=\int_0^1\!\int_0^1 D^2K_{\theta_0+s\mu+t\nu}[\mu,\nu]\,dt\,ds.
\end{align*}
By \cref{thm:exact-fiber}, \(K_{\theta_0+\nu}=K_{\theta_0}\), so the last two terms on the left cancel. Bounding the integral gives \eqref{eq:cross-fiber}. The same argument applies to \(P\), with \(L_2\) chosen to bound both second derivatives.
\end{proof}

\subsection{Benchmark equivalence and nominal control}\label{app:nominal-bounds}
\begin{lemma}\label{lem:benchmark}
For every \(\theta\in\Theta\) such that \((A_\theta,B_\theta)\) is stabilizable and \((Q^{1/2},A_\theta)\) observable, the DARE has a unique positive semidefinite solution, which is positive definite and stabilizing. The maps \(P_\theta,K_\theta\) are smooth at interior parameters satisfying these conditions.
Under \cref{ass:noise-moments,ass:regular}, there are a neighborhood \(U_0\subset\Theta\) of \(\theta_0\) on which both conditions hold and constants \(C_b<\infty\), \(0<\beta<1\), such that, uniformly over \(\theta\in U_0\) and integers \(h,T\ge1\),
\begin{align}
 \norm{K_{\theta,h}-K_\theta}+\norm{P_{\theta,h}-P_\theta}
 &\le C_b\beta^h,                                             \label{eq:riccati-exp}\\
 \left|V_T^\star(\theta)-TJ_\theta\right|&\le C_b.           \label{eq:benchmark-equivalence}
\end{align}
Consequently \(|R_T^\pi(\theta)-\bar R_T^\pi(\theta)|\le C_b\) for every \(\theta\in U_0\) and policy with finite expected cumulative cost.
\end{lemma}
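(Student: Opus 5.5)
The plan has four parts: classical DARE theory for existence and uniqueness, the implicit function theorem for smoothness, a contraction argument for uniform Riccati convergence, and a dynamic-programming comparison for the benchmark bound.

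\emph{Existence, uniqueness and smoothness.} Existence, uniqueness in the positive semidefinite cone, positive definiteness and the stabilizing property of \(P_\theta\) are the classical consequences of stabilizability and observability; I would cite \cite{hager1976convergence}. For positive definiteness, observe that \(x^\top P_\theta x\) is at least the optimal cost-to-go from \(x\) without noise, which is positive for \(x\neq0\) by observability. For smoothness, write the DARE as \(\Phi(\theta,P)=0\), where \(\Phi\) is real-analytic in \((\theta,P)\) near any point with \(R+B_\theta^\top PB_\theta\succ0\). At \(P=P_\theta\) the partial derivative in \(P\) is \(X\mapsto X-F_\theta^\top XF_\theta\). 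This map is invertible because \(F_\theta\) is Schur stable, so its eigenvalue products satisfy \(\lambda_i\lambda_j\neq1\). The implicit function theorem then gives a smooth local branch \(\tilde P_\theta\). The remaining point is that this branch agrees with the stabilizing solution. It is positive definite and stabilizing near \(\theta\) by continuity, and stabilizability and observability are open conditions (PBH rank tests at finitely many eigenvalues persist under perturbation). Uniqueness then identifies the branch with \(P_{\theta'}\). Smoothness of \(K_\theta\) follows from the formula \eqref{eq:optimal-quantities}.

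\emph{Uniform exponential convergence \eqref{eq:riccati-exp}.} Choose \(U_0\) to be a compact ball around \(\theta_0\) on which both conditions hold and \(\norm{P_\theta}\), \(\norm{P_\theta^{-1}}\) are bounded. By continuity there is one pair \(H\succ0\), \(\beta_0<1\) with \(F_\theta^\top HF_\theta\preceq\beta_0^2H\) for all \(\theta\in U_0\); for instance take \(H\) to be the Lyapunov matrix of \(F_{\theta_0}\) and shrink \(U_0\).

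I expect this step to be the main obstacle. Convergence of \(P_{\theta,h}\) to \(P_\theta\) from \(P_{\theta,0}=0\) is classical pointwise, but the rate must be uniform in \(\theta\), and the difference recursion is not simply the linearization because the iteration starts at zero, far from \(P_\theta\). I would proceed in two stages.
\begin{enumerate}
\item Monotonicity gives \(0\preceq P_{\theta,h}\uparrow P_\theta\). Observability over \(n_x\) steps, uniform on compact \(U_0\), gives \(P_{\theta,n_x}\succeq cI\) with \(c>0\) uniform.
\item Once \(cI\preceq P_{\theta,h}\preceq P_\theta\), use the standard identity
\begin{equation*}
 P_\theta-P_{\theta,h+1}=F_{\theta,h}^\top(P_\theta-P_{\theta,h})F_\theta,
\end{equation*}
where \(F_{\theta,h}\triangleq A_\theta-B_\theta K_{\theta,h+1}\) is the closed-loop matrix of the finite-horizon gain. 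Iterate it.
\end{enumerate}
For the product of the \(F_{\theta,h}\) factors I would bound the closed-loop maps through the uniformly bounded and positive definite \(P_{\theta,h}\), as in Hager–Horowitz. The factors \(F_\theta\) supply the uniform decay \(C\beta^h\). If necessary, the cruder bound \(\norm{P_\theta-P_{\theta,h}}\le C\norm{F_\theta^{h-n_x}}\) can be justified by comparing costs of the feedback \(K_\theta\) truncated at the horizon. The bound on \(K_{\theta,h}\) then follows from the Lipschitz dependence of \(K\) on \(P\) on bounded sets, with \(S\succeq R\).

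\emph{Benchmark bound \eqref{eq:benchmark-equivalence}.} The finite-horizon optimum is
\begin{equation*}
 V_T^\star(\theta)=x_0^\top P_{\theta,T}x_0+\sum_{h=1}^{T-1}\tr(P_{\theta,h}W).
\end{equation*}
Compared with \(TJ_\theta\), the error is at most
\begin{equation*}
 \norm{P_\theta}\norm{x_0}^2+\tr(P_\theta W)+\sum_h C_b\beta^h\tr W,
\end{equation*}
which is uniformly bounded. Finally, subtracting the definitions of \(R_T^\pi\) and \(\bar R_T^\pi\) gives the consequence \(|R_T^\pi(\theta)-\bar R_T^\pi(\theta)|\le C_b\), since the difference is exactly \(V_T^\star(\theta)-TJ_\theta\).
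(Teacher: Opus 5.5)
Your proposal is correct. Existence, positive definiteness, smoothness and the benchmark bound go essentially as in the paper. The paper proves stability of every positive semidefinite DARE solution directly from \eqref{eq:bellman-riccati} rather than citing it. Conversely, your identification of the implicit-function branch with \(P_{\theta'}\) fills in a step the paper leaves implicit.

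The real difference is in \eqref{eq:riccati-exp}. The paper argues locally. In the \(H\)-weighted norm the Riccati map has derivative \(X\mapsto F_\theta^\top XF_\theta\) at \(P_\theta\), and compactness gives contraction balls with a common radius and factor. Dini's theorem, applied to the monotone continuous functions \(\lambda_{\max}(P_\theta-P_{\theta,h})\), then gives a uniform index after which all iterates lie in those balls.

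You instead use the exact global identity \(P_\theta-P_{\theta,h+1}=F_{\theta,h}^\top(P_\theta-P_{\theta,h})F_\theta\), which is correct. You start it at the fixed index \(n_x\), where observability and compactness give \(P_{\theta,n_x}\succeq cI\). This removes both the local-contraction step and the Dini step, and it yields an explicit entry time and explicit constants. The cost is a uniform bound on the time-varying product, which you only gesture at; it takes one line to write out. The recursion \eqref{eq:app-fh-rec} gives \(F_{\theta,k}^\top P_{\theta,k}F_{\theta,k}\preceq P_{\theta,k+1}\). Hence \(\Phi\triangleq F_{\theta,n_x}\cdots F_{\theta,h-1}\) satisfies \(c\,\Phi^\top\Phi\preceq\Phi^\top P_{\theta,n_x}\Phi\preceq P_{\theta,h}\preceq P_\theta\). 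So \(\norm{\Phi}^2\le\norm{P_\theta}/c\), and \(\norm{P_\theta-P_{\theta,h}}\le\norm{\Phi}\norm{P_\theta}\norm{F_\theta^{h-n_x}}\).

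You should drop the fallback ``cruder bound''. Comparing with the truncated feedback \(K_\theta\) only gives \(P_{\theta,h}\preceq P_\theta-(F_\theta^h)^\top P_\theta F_\theta^h\), which is a lower bound on \(P_\theta-P_{\theta,h}\), not an upper bound. An upper bound requires controlling the end state of the finite-horizon optimal trajectory, and that is exactly the product \(\Phi\) above.
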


\begin{proof}
For the finite-horizon problem, set $P_{\theta,0}=0$ and for $h\ge1$ define
\begin{align}
 S_{\theta,h}&=R+B_\theta^\top P_{\theta,h-1}B_\theta,\notag\\
 K_{\theta,h}&=S_{\theta,h}^{-1}B_\theta^\top P_{\theta,h-1}A_\theta,\label{eq:app-fh-rec}\\
 P_{\theta,h}&=Q+K_{\theta,h}^\top RK_{\theta,h}\notag\\
 &\quad +(A_\theta-B_\theta K_{\theta,h})^\top P_{\theta,h-1}
 (A_\theta-B_\theta K_{\theta,h}).\notag
\end{align}
For each parameter satisfying the stated conditions, $P_{\theta,h}\succeq0$ increases with $h$ and is bounded above by the cost matrix of a stabilizing feedback. Since $S_{\theta,h}\succeq R\succ0$, its limit solves the DARE. For any positive semidefinite solution, define $K_\theta,F_\theta$ by \eqref{eq:optimal-quantities}. If $F_\theta z=\lambda z$ for a nonzero complex vector $z$ and $|\lambda|\ge1$, \eqref{eq:bellman-riccati} gives
\begin{equation*}
 (1-|\lambda|^2)z^*P_\theta z
 =\norm{Q^{1/2}z}^2+\norm{R^{1/2}K_\theta z}^2.
\end{equation*}
Both sides vanish, so $Q^{1/2}z=K_\theta z=0$ and $A_\theta z=\lambda z$, contradicting observability. Thus $F_\theta$ is Schur stable.

If $x^\top P_\theta x=0$, iterating \eqref{eq:bellman-riccati} gives $Q^{1/2}F_\theta^jx=K_\theta F_\theta^jx=0$ for all $j\ge0$. Hence $F_\theta^jx=A_\theta^jx$, and observability gives $x=0$, proving $P_\theta\succ0$. Completing the square in the Bellman identity under each solution's stable feedback gives opposite inequalities between the two cost matrices, proving uniqueness \cite{anderson2007optimal,lancaster1995algebraic,hager1976convergence}.

At such an interior parameter, stationarity gives the derivative $X\mapsto X-F_\theta^\top XF_\theta$ of the DARE residual with respect to $P$. Schur stability makes this map invertible on symmetric matrices, so the smooth implicit function theorem and \eqref{eq:optimal-quantities} give smoothness of $P_\theta,K_\theta$.

Stabilizability and observability persist on a sufficiently small neighborhood of $\theta_0$. On a smaller compact neighborhood, choose $H\succ0$ and $0<q<1$ with $F_\theta^\top HF_\theta\preceq q^2H$. In the norm $\norm{X}_{H,*}\triangleq \norm{H^{-1/2}XH^{-1/2}}$, the derivative of the Riccati iteration at $P_\theta$ is $X\mapsto F_\theta^\top XF_\theta$ and has operator norm at most $q^2$. Continuity and compactness give balls of a common radius about the solutions with a common contraction factor $\beta\in(q^2,1)$.

Since the continuous functions $\lambda_{\max}(P_\theta-P_{\theta,h})$ decrease to zero, Dini's theorem gives an index $h_0$, uniform in $\theta$, after which the iterates lie in these contraction neighborhoods. Thus, uniformly in $\theta$, for $h\ge h_0$,
\begin{equation*}
 \norm{P_{\theta,h}-P_\theta}_{H,*}
 \le\beta^{h-h_0}\norm{P_{\theta,h_0}-P_\theta}_{H,*}.
\end{equation*}
The gain formula \eqref{eq:app-fh-rec} is uniformly Lipschitz in $P_{\theta,h-1}$ on this region, so \eqref{eq:riccati-exp} follows by norm equivalence, with the earlier iterates absorbed into $C_b$.

For the fixed initial state, dynamic programming gives
\begin{equation*}
 V_T^\star(\theta)=x_0^\top P_{\theta,T}x_0
 +\sum_{h=1}^T\tr(P_{\theta,h-1}W).
\end{equation*}
Uniform boundedness of \(P_{\theta,T}\) and the geometric sum in \eqref{eq:riccati-exp} give \eqref{eq:benchmark-equivalence}, enlarging \(C_b\) if needed. Finite policy cost, \(R\succ0\), and the dynamics with finite noise variance ensure finite control and state second moments over the horizon. Completing the square with $P_{\theta,T-t-1}$ in the dynamic programming identity and telescoping gives \eqref{eq:FH-PDI}, with zero terminal value since $P_{\theta,0}=0$.
\end{proof}

\begin{proposition}\label{prop:nominal-upper}
Under \cref{ass:noise-moments,ass:regular}, fix \(\varepsilon>0\). There are \(T_0<\infty\) and \(C<\infty\) such that, for every \(T\ge T_0\), the stationary nominal controller \(u=-K_0x\) satisfies, uniformly over \(\theta\in\Theta_T(\theta_0,\varepsilon)\),
\begin{equation}
 R_T^{K_0}(\theta)\le C T\norm{K_\theta-K_0}^2+C.             \label{eq:nominal-upper-general}
\end{equation}
Consequently,
\begin{align}
 R_T^{K_0}(\theta)&\le C\sqrt T, \label{eq:nominal-sqrt}\\
 R_T^{K_0}(\theta)&\le C \quad\text{if }DK_{\theta_0}=0.   \label{eq:nominal-flat}
\end{align}
\end{proposition}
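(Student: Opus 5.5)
We first reduce regret to the stationary Bellman loss under the fixed gain \(K_0\), then bound the state energy uniformly and control \(\norm{K_\theta-K_0}\) on the shrinking ball.

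\textbf{Reduction to Bellman loss.} By \cref{lem:benchmark}, \(|R_T^{K_0}(\theta)-\bar R_T^{K_0}(\theta)|\le C_b\), so it suffices to bound \(\bar R_T^{K_0}(\theta)=V_T^{K_0}(\theta)-TJ_\theta\). Along any trajectory, the stationary Bellman identity \eqref{eq:bellman-riccati} at \(\theta\) gives, after completing the square and telescoping,
\begin{equation*}
 \E\sum_{t<T}c(x_t,u_t)-TJ_\theta
 =x_0^\top P_\theta x_0-\E x_T^\top P_\theta x_T
 +\sum_{t<T}\E\norm{u_t+K_\theta x_t}_{S_\theta}^2 .
\end{equation*}
The cross terms vanish because \(w_{t+1}\) is independent of \(\mathcal F_t\) and has mean zero. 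The first boundary term is at most \(x_0^\top P_\theta x_0\), which is bounded uniformly on \(U_0\). The second boundary term is nonpositive. With \(u_t=-K_0x_t\), the loss term equals \(\sum_t\E\norm{(K_\theta-K_0)x_t}_{S_\theta}^2\le \norm{S_\theta}\norm{K_\theta-K_0}^2\sum_t\E\norm{x_t}^2\).

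\textbf{Uniform state energy.} This is the only step needing care: we need a single Lyapunov certificate for \(A_\theta-B_\theta K_0\) uniformly over the ball. Since \(F_{\theta_0}\) is Schur, pick \(H\succ0\) and \(q<1\) with \(F_{\theta_0}^\top HF_{\theta_0}\preceq q^2H\). By continuity, for \(\norm{\theta-\theta_0}\le\delta\) the matrix \(A_\theta-B_\theta K_0\) satisfies the same inequality with some \(q'<1\). For \(T\ge T_0\) we have \(\varepsilon T^{-1/4}\le\delta\), and also \(\Theta_T\subseteq U_0\). Then
\begin{equation*}
 \E x_{t+1}^\top Hx_{t+1}\le q'^2\,\E x_t^\top Hx_t+\tr(HW),
\end{equation*}
so \(\E\norm{x_t}^2\le C(\norm{x_0}^2+1)\) uniformly in \(t\) and \(\theta\). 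Combining with the previous step gives \eqref{eq:nominal-upper-general}, with \(\norm{S_\theta}\) bounded uniformly on \(U_0\).

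\textbf{Consequences.} Smoothness of \(K_\theta\) from \cref{lem:benchmark} gives a Lipschitz bound \(\norm{K_\theta-K_0}\le L\varepsilon T^{-1/4}\) on a compact ball, so \(T\norm{K_\theta-K_0}^2\le L^2\varepsilon^2\sqrt T\), which is \eqref{eq:nominal-sqrt}. If \(DK_{\theta_0}=0\), Taylor's theorem with a bounded second derivative on the compact ball gives \(\norm{K_\theta-K_0}\le C\norm{\theta-\theta_0}^2\le C\varepsilon^2T^{-1/2}\). Hence \(T\norm{K_\theta-K_0}^2\le C\varepsilon^4\), which is \eqref{eq:nominal-flat}.
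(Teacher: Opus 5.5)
Your proof is correct and follows essentially the same route as the paper: a common quadratic Lyapunov function for \(A_\theta-B_\theta K_0\) on a fixed nominal neighborhood gives uniform state second moments, and the stationary Bellman identity at \(\theta\) reduces average-cost regret to \(\sum_t\E\norm{(K_\theta-K_0)x_t}_{S_\theta}^2\) plus a bounded initial term and a nonpositive terminal term. As in the paper, \cref{lem:benchmark} then converts this to finite-horizon regret, and first- and second-order Taylor bounds on \(K_\theta-K_0\) give the two consequences; you only reorder these steps and spell out the drift and Taylor estimates slightly more explicitly.
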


\begin{proof}
Choose a nominal neighborhood on which \(A_\theta-B_\theta K_0\) has a common quadratic Lyapunov function, and then choose \(T_0\) so that \(\Theta_T(\theta_0,\varepsilon)\) lies in this neighborhood for every \(T\ge T_0\). Let \(u_t=-K_0x_t\). Then \(\sup_t\E_\theta\norm{x_t}^2\le C_x\) uniformly on that neighborhood. The stationary Bellman identity at the true parameter gives
\begin{align*}
 V_T^{K_0}(\theta)-TJ_\theta
 &=x_0^\top P_\theta x_0-\E_\theta[x_T^\top P_\theta x_T]\\
 &\quad+\E_\theta\sum_{t=0}^{T-1}\norm{(K_\theta-K_0)x_t}_{S_\theta}^2.
\end{align*}
The initial term is bounded, the terminal term is nonpositive, and \(S_\theta\) is uniformly bounded. Combining the resulting upper bound with \cref{lem:benchmark} proves \eqref{eq:nominal-upper-general}. Taylor's theorem gives \(\norm{K_\theta-K_0}=O(T^{-1/4})\), or \(O(T^{-1/2})\) if \(DK_{\theta_0}=0\). Substitution into \eqref{eq:nominal-upper-general} yields \eqref{eq:nominal-sqrt} and \eqref{eq:nominal-flat}, respectively.
\end{proof}

\subsection{Mixed derivative of the optimal gain}\label{app:example}
Consider the family in Section~\ref{sec:nonflat-example}, with parameter directions \(m,n\) defined there. We use the second derivative convention defined in the proof of \cref{thm:cross-fiber}.
Applying \cref{thm:directional} in direction \(m\) gives
\begin{equation*}
 DP_0[m]=\diag(60/91,0),\qquad
 DK_0[m]=\begin{bmatrix}109/182&0\end{bmatrix}.
\end{equation*}
Differentiate the Lyapunov equation \eqref{eq:DP} for direction \(n\) along \(m\). At the origin, \(\cZ_0n=0\) and
\(
\left.\frac{d}{ds}(\cZ_{sm}n)\right|_{s=0}=-D_nDK_0[m]
\).
Here
\(
-D_nDK_0[m]=\begin{bmatrix}0&0\\-109/182&0\end{bmatrix}
\),
so both forcing terms in the differentiated Lyapunov equation vanish and \(D^2P_0[m,n]=0\). In the derivative of \eqref{eq:DK}, all remaining terms vanish except
\(
D_n^\top P_0(E_m-B_0DK_0[m])=\begin{bmatrix}1/2&0\end{bmatrix}
\).
Since \(S_0=2\),
\begin{equation}
 D^2K_0[m,n]=\begin{bmatrix}1/4&0\end{bmatrix}.             \label{eq:example-mixed-derivative}
\end{equation}
The identity \(K_{0,\nu}=K_{0,0}\) and two applications of the fundamental theorem of calculus give
\begin{equation*}
 K_{\mu,\nu}-K_{\mu,0}
 =\mu\nu\int_0^1\!\int_0^1
 D^2K_{s\mu,t\nu}[m,n]\,ds\,dt.
\end{equation*}
Continuity and the nonzero first component in \eqref{eq:example-mixed-derivative} give matching upper and lower bounds for all sufficiently small \(\mu,\nu\), proving \eqref{eq:example-cross}.

\section{Estimation and Stability under FCE}\label{app:FCE}
\subsection{Restricted inversion and local stability}
For \(\cM_{\theta_0}\ne\{0\}\), let \(\sigma_0>0\) be the smallest singular value of \(\cL_{K_0}\).
\begin{lemma}\label{lem:left-inverse}
There is \(r_L>0\) such that \(\norm{K-K_0}\le r_L\) implies
\begin{equation*}
 \sigma_{\min}(\cL_K)\ge\frac{\sigma_0}{2},
 \qquad
 \opnorm{\cL_K^\dagger}\le\frac{2}{\sigma_0}.
\end{equation*}
\end{lemma}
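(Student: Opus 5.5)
The plan is a matrix-perturbation argument for the linear map \(\cL_K:\cM_{\theta_0}\to\R^{n_x\times n_x}\), \(v\mapsto A[v]-B[v]K\), using that it depends affinely on \(K\). For every \(v\in\cM_{\theta_0}\),
\begin{equation*}
 \cL_Kv-\cL_{K_0}v=-B[v](K-K_0).
\end{equation*}
Set \(\beta\triangleq\max_{\norm{v}=1}\opnorm{B[v]}\). This is finite, since \(v\mapsto B[v]\) is a linear map on a finite-dimensional space. It follows that
\begin{equation*}
 \fnorm{(\cL_K-\cL_{K_0})v}\le\sqrt{n_x}\,\beta\norm{K-K_0}\norm{v}.
\end{equation*}
Here the operator norm of the product is bounded by \(\opnorm{B[v]}\norm{K-K_0}\), and we pass to the Frobenius norm at the cost of the factor \(\sqrt{n_x}\), since the rank is at most \(n_x\). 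So \(\opnorm{\cL_K-\cL_{K_0}}\le C_B\norm{K-K_0}\) with \(C_B\triangleq\sqrt{n_x}\beta\), where the operator norm is taken with respect to the Euclidean norm on \(\cM_{\theta_0}\) and the Frobenius norm on matrices.

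Next, recall that \(\cL_{K_0}\) is injective by the definition \(\cM_{\theta_0}=(\ker\cZ_{K_0})^\perp\). Hence \(\sigma_0=\min_{\norm{v}=1,\,v\in\cM_{\theta_0}}\fnorm{\cL_{K_0}v}>0\).

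Apply Weyl's inequality for the smallest singular value, or more directly the reverse triangle inequality: for unit \(v\in\cM_{\theta_0}\),
\begin{equation*}
 \fnorm{\cL_Kv}\ge\fnorm{\cL_{K_0}v}-C_B\norm{K-K_0}\ge\sigma_0-C_Br_L.
\end{equation*}
Choose \(r_L\triangleq\sigma_0/(2C_B)\), or \(r_L\) arbitrary if \(C_B=0\). Then \(\sigma_{\min}(\cL_K)\ge\sigma_0/2\). In particular, \(\cL_K\) remains injective.

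Finally, for an injective linear map \(L\) between finite-dimensional inner-product spaces, \(L^\dagger=(L^*L)^{-1}L^*\) is a left inverse. Its operator norm equals \(1/\sigma_{\min}(L)\), as follows from the singular value decomposition: the nonzero singular values of \(L^\dagger\) are the reciprocals of those of \(L\). This gives \(\opnorm{\cL_K^\dagger}\le2/\sigma_0\).

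There is no real obstacle here. The only care needed is in fixing the norms: the domain is the subspace \(\cM_{\theta_0}\) with the Euclidean norm, and the codomain is the matrix space with the Frobenius inner product, consistent with the paper's convention for adjoints and pseudoinverses. Also note that the pseudoinverse is taken for the restricted map, so it acts as a genuine left inverse on \(\cM_{\theta_0}\).
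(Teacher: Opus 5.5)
Your proof is correct and follows the paper's argument. It uses the same identity \((\cL_K-\cL_{K_0})m=-B[m](K-K_0)\) and the boundedness of \(m\mapsto B[m]\) to control \(\cL_K-\cL_{K_0}\), then perturbs the smallest singular value; you simply make the paper's appeal to continuity of singular values quantitative through the reverse triangle inequality, which gives the explicit radius \(r_L=\sigma_0/(2C_B)\) and reads off \(\opnorm{\cL_K^\dagger}=1/\sigma_{\min}(\cL_K)\).
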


\begin{proof}
For \(m\in\cM_{\theta_0}\), \((\cL_K-\cL_{K_0})m=-B[m](K-K_0)\). Boundedness of the linear map \(m\mapsto B[m]\) gives \(\opnorm{\cL_K-\cL_{K_0}}\to0\) as \(K\to K_0\). Continuity of singular values proves the bounds.
\end{proof}

\begin{lemma}\label{lem:nuisance}
Under \cref{ass:regular}, there is \(C_\nu\) such that, for the FCE updates with sufficiently small maximum projection radius \(\bar\rho\),
\begin{equation*}
 \fnorm{\cZ_{K_j}\nu}\le C_\nu\norm{\nu}\rho_j.
\end{equation*}
\end{lemma}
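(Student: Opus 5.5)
The plan is to combine the identity \eqref{eq:nuisance-identity} with a deterministic bound on the distance between the current gain \(K_j\) and the nominal gain \(K_0\). Since \(\nu\in\cN_{\theta_0}=\ker\cZ_{K_0}\), we have \(A[\nu]=B[\nu]K_0\), and therefore
\begin{equation*}
 \cZ_{K_j}\nu=A[\nu]-B[\nu]K_j=-B[\nu](K_j-K_0).
\end{equation*}
The map \(v\mapsto B[v]\) is linear, so \(\fnorm{B[\nu]}\le c_B\norm{\nu}\) for a constant \(c_B\) depending only on \(B_1,\ldots,B_d\). Using \(\fnorm{XY}\le\fnorm{X}\norm{Y}\), we get
\begin{equation*}
 \fnorm{\cZ_{K_j}\nu}\le c_B\norm{\nu}\,\norm{K_j-K_0}.
\end{equation*}
It therefore remains to show \(\norm{K_j-K_0}\le L\rho_j\) with \(L\) independent of \(j\).

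For that step, recall that \(K_j=K_{\theta_0+\widehat\mu_j}\). By construction \(\widehat\mu_0=0\), so \(K_0\) (epoch \(0\)) coincides with the nominal gain and the bound holds trivially for \(j=0\). For \(j\ge1\), the projection \eqref{eq:mu-proj} places \(\widehat\mu_j\) in \(\cM_{\theta_0}\cap\rho_j\ball_2\), so \(\norm{\widehat\mu_j}\le\rho_j\le\bar\rho\) deterministically.

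By \cref{lem:benchmark}, \(\theta\mapsto K_\theta\) is smooth on the neighborhood \(U_0\) of \(\theta_0\). Choose \(\bar\rho\) small enough that the closed ball \(\theta_0+\bar\rho\ball_2\) lies inside \(U_0\). On this compact convex ball the derivative \(DK\) is bounded by some \(L\), and the mean value inequality gives
\begin{equation*}
 \norm{K_{\theta_0+\widehat\mu_j}-K_{\theta_0}}\le L\norm{\widehat\mu_j}\le L\rho_j.
\end{equation*}
We may shrink \(\bar\rho\) further so that \(L\bar\rho\le r_L\). This keeps Lemma~\ref{lem:left-inverse} applicable, which is consistent with the "sufficiently small \(\bar\rho\)" in the statement but is not needed for the present bound. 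Setting \(C_\nu=c_BL\) finishes the argument.

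No step here is a real obstacle. The only point needing care is that the Lipschitz constant \(L\) must be uniform. It depends only on the fixed ball of radius \(\bar\rho\) and not on \(j\), \(T\), or the random estimates, because the projection makes \(\norm{\widehat\mu_j}\le\rho_j\) hold deterministically rather than only in expectation.
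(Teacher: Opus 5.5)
Your proposal is correct and follows the paper's own proof: the identity \eqref{eq:nuisance-identity}, linearity of \(v\mapsto B[v]\), and local Lipschitz continuity of \(\eta\mapsto K_{\theta_0+\eta}\) applied to the deterministic projection bound \(\norm{\widehat\mu_j}\le\rho_j\). The paper states this in one line, and your write-up simply spells out the details, including the trivial case \(j=0\) and the uniformity of the Lipschitz constant.
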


\begin{proof}
Combine \eqref{eq:nuisance-identity}, boundedness of \(B[\cdot]\), and local Lipschitz continuity of \(K_{\theta_0+\eta}\).
\end{proof}

\begin{lemma}\label{lem:common-lyap}
Write \(G(\theta,\eta)\triangleq A_\theta-B_\theta K_{\theta_0+\eta}\).
Under \cref{ass:regular}, for all sufficiently small \(\bar\rho>0\), there are \(H\succ0\) and \(\alpha>0\) such that
\begin{equation}
 G(\theta,\eta)^\top HG(\theta,\eta)\preceq H-\alpha I       \label{eq:common-lyap}
\end{equation}
for every \(\theta\in\Theta\) with \(\norm{\theta-\theta_0}\le\bar\rho\) and every \(\eta\in\cM_{\theta_0}\) with \(\norm{\eta}\le\bar\rho\).
If \cref{ass:noise-moments} also holds, there are constants \(C_x,C_H<\infty\) such that, under arbitrary switching among FCE controllers, for integers \(\tau\ge0\) and \(N\ge1\),
\begin{align}
 \sup_t\E\norm{x_t}^2&\le C_x,                              \label{eq:uniform-state}\\
 \E\!\left[\sum_{s=0}^{N-1}\norm{x_{\tau+s}}^2\mid\mathcal F_\tau\right]
 &\le C_H(N+x_\tau^\top Hx_\tau).                           \label{eq:conditional-energy}
\end{align}
\end{lemma}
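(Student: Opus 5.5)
The first claim \eqref{eq:common-lyap} will follow from a perturbation argument at the nominal point, combined with compactness. At \(\theta=\theta_0\), \(\eta=0\) we have \(G(\theta_0,0)=F_{\theta_0}\), which is Schur stable by \cref{lem:benchmark}. Hence the discrete Lyapunov equation \(H-F_{\theta_0}^\top HF_{\theta_0}=2I\) has a solution \(H\succ0\). By \cref{lem:benchmark}, \(K_{\theta_0+\eta}\) is smooth near \(\eta=0\), and \(A_\theta,B_\theta\) are affine, so \(G\) is continuous in \((\theta,\eta)\) near \((\theta_0,0)\). The map \((\theta,\eta)\mapsto H-G^\top HG\) is therefore continuous and equals \(2I\) at the nominal point. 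We then take \(\bar\rho\) small enough that the closed set \(\{\norm{\theta-\theta_0}\le\bar\rho\}\times\{\eta\in\cM_{\theta_0}:\norm{\eta}\le\bar\rho\}\) lies in the smoothness neighborhood \(U_0\) and that \(\opnorm{G^\top HG-F_{\theta_0}^\top HF_{\theta_0}}\le1\) there. This gives \eqref{eq:common-lyap} with \(\alpha=1\). Since \(H\) and \(\alpha\) work for this radius, they also work for every smaller \(\bar\rho\).

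For the moment bounds, set \(V(x)=x^\top Hx\). Write \(x_{t+1}=G_tx_t+w_{t+1}\), where \(G_t=G(\theta,\widehat\mu_j)\) is the matrix in force at time \(t\); it is \(\mathcal F_t\)-measurable and lies in the admissible set under any switching rule. Because \(w_{t+1}\) is independent of \(\mathcal F_t\) with mean zero, we get
\begin{equation*}
 \E[V(x_{t+1})\mid\mathcal F_t]=x_t^\top G_t^\top HG_tx_t+\tr(HW)\le V(x_t)-\alpha\norm{x_t}^2+\tr(HW).
\end{equation*}
Using \(\norm{x}^2\ge V(x)/\lambda_{\max}(H)\), this becomes \(\E[V(x_{t+1})\mid\mathcal F_t]\le\gamma V(x_t)+\tr(HW)\) with \(\gamma=1-\alpha/\lambda_{\max}(H)\in[0,1)\). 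Taking expectations and iterating from the fixed \(x_0\) gives \(\E V(x_t)\le V(x_0)+\tr(HW)/(1-\gamma)\). Dividing by \(\lambda_{\min}(H)\) then yields \eqref{eq:uniform-state}.

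For \eqref{eq:conditional-energy}, we instead sum the first (unrelaxed) inequality from \(s=0\) to \(N-1\) and take conditional expectations given \(\mathcal F_\tau\). The tower property applies because \(\mathcal F_\tau\subseteq\mathcal F_{\tau+s}\). Telescoping gives
\begin{equation*}
 \alpha\,\E\Big[\sum_{s=0}^{N-1}\norm{x_{\tau+s}}^2\Bigm|\mathcal F_\tau\Big]\le V(x_\tau)-\E[V(x_{\tau+N})\mid\mathcal F_\tau]+N\tr(HW)\le V(x_\tau)+N\tr(HW).
\end{equation*}
This is \eqref{eq:conditional-energy} with \(C_H=\max\{1,\tr(HW)\}/\alpha\).

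The only delicate point is the measurability and admissibility of the switching. FCE changes \(K_j\) only at epoch boundaries, using data up to that time, and the projection \eqref{eq:mu-proj} keeps \(\widehat\mu_j\) in \(\cM_{\theta_0}\cap\bar\rho\ball_2\). So \(G_t\) is \(\mathcal F_t\)-measurable and always admissible, and the true \(\theta\) lies within \(\bar\rho\) of \(\theta_0\) for large \(T\). The argument above uses only these two properties, so the bounds hold for arbitrary adapted switching, and the constants depend only on \(H\), \(\alpha\), \(W\), and \(x_0\).
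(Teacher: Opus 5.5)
Your proof is correct and follows essentially the same route as the paper. Both solve a Lyapunov equation at \(F_{\theta_0}\), extend the strict inequality by continuity to a small product neighborhood, and then use the one-step conditional drift with \(V(x)=x^\top Hx\) twice: iterated from \(x_0\) for the uniform second-moment bound, and telescoped from \(\tau\) for the conditional energy bound. Your normalization \(H-F_{\theta_0}^\top HF_{\theta_0}=2I\), which yields \(\alpha=1\), is only a cosmetic difference from the paper's choice of \(I\).
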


\begin{proof}
Let \(H-F_{\theta_0}^\top HF_{\theta_0}=I\). Continuity gives \eqref{eq:common-lyap} on a sufficiently small product neighborhood. Conditional drift gives \(\E[x_t^\top Hx_t]\le(1-\gamma)^t x_0^\top Hx_0+\tr(HW)/\gamma\) for some \(\gamma\in(0,1)\), proving \eqref{eq:uniform-state}. Telescoping the same drift from \(\tau\) gives \eqref{eq:conditional-energy}.
\end{proof}

\subsection{Excitation from nondegenerate noise}\label{app:noise}
Positive definite covariance supplies excitation uniformly over state directions and predictable shifts.

\begin{lemma}\label{lem:location-smallball}
Under \cref{ass:noise-moments}, there are $a_0,p_0>0$, depending on the fixed noise law, such that
\begin{equation*}
 \inf_{\norm{q}=1}\inf_{a\in\R}
 \Pp\bigl(|q^\top w-a|\ge a_0\bigr)\ge p_0.
\end{equation*}
\end{lemma}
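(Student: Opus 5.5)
The plan is to obtain a uniform small-ball bound from the second moment structure alone, using a compactness argument over the unit sphere combined with a uniform-in-\(a\) estimate. First I will show that for each fixed unit \(q\) the scalar variable \(q^\top w\) is not almost surely constant. Indeed \(\E(q^\top w)^2=q^\top Wq\ge\lambda_{\min}(W)>0\) and \(\E q^\top w=0\), so \(q^\top w\) is nondegenerate with variance bounded below uniformly in \(q\).

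The core step is a Paley--Zygmund-type argument applied to the shifted variable \(Y=q^\top w-a\). I will show that \(\E Y^2=q^\top Wq+a^2\ge\lambda_{\min}(W)+a^2\). Splitting on the event \(\{|Y|<a_0\}\) then gives
\[
\E Y^2\le a_0^2+\E\bigl[Y^2\mathbf 1\{|Y|\ge a_0\}\bigr].
\]
Only second moments are available, so I will not invoke fourth-moment Paley--Zygmund. Instead I will separate two regimes of \(a\).

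For \(|a|\) large, say \(|a|\ge 2M\), Chebyshev gives \(\Pp(|q^\top w|\ge M)\le\tr(W)/M^2\le 1/2\) for a suitable \(M\) independent of \(q\). On the complementary event \(|q^\top w-a|\ge |a|-M\ge M\), so the target probability is at least \(1/2\) with \(a_0\le M\).

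For \(|a|\le 2M\), I will use compactness of \(S\times[-2M,2M]\), where \(S\) is the unit sphere. I will argue by contradiction: suppose there are \((q_k,a_k)\) with \(\Pp(|q_k^\top w-a_k|\ge 1/k)\to0\). Passing to a subsequence, \(q_k\to q\) and \(a_k\to a\). Then \(q_k^\top w-a_k\to q^\top w-a\) almost surely, and \(\Pp(|q_k^\top w-a_k|<1/k)\to1\). It follows that \(q^\top w=a\) almost surely, since for any \(\delta>0\), \(\Pp(|q^\top w-a|\ge\delta)\le\liminf\Pp(|q_k^\top w-a_k|\ge\delta/2)=0\). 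This contradicts nondegeneracy. So there exist \(k\) and \(p>0\) with \(\Pp(|q^\top w-a|\ge 1/k)\ge p\) uniformly on the compact set. Combining the two regimes, I will take \(a_0=\min\{1/k,M\}\) and \(p_0=\min\{p,1/2\}\).

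The main obstacle is the uniformity over \(a\) in the bounded regime, in particular the step that turns convergence in probability into the limit statement for a fixed threshold \(1/k\) that shrinks along the sequence. The contradiction route handles this: \(\Pp(|Y_k|\ge1/k)\) is monotone in the threshold, so for each fixed \(\delta\) the probability \(\Pp(|Y_k|\ge\delta)\) eventually vanishes. These constants depend only on the noise law, as the statement requires.
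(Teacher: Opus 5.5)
Your proof is correct, but it takes a different route from the paper's.

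\textbf{The paper's route.} The paper eliminates the shift $a$ in one stroke by symmetrization. With $w'$ an independent copy and $D=w-w'$, the triangle inequality and identical laws give $\Pp(|q^\top D|\ge b)\le 2\Pp(|q^\top w-a|\ge b/2)$ for every $a\in\R$. Compactness is therefore needed only over the unit sphere, applied to the continuous truncated functional $q\mapsto\E\min\{(q^\top D)^2,1\}$. Its positive minimum $m$ then yields $a_0=\sqrt{m/8}$ and $p_0=m/4$ directly, with no case analysis.

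\textbf{Your route.} You handle the noncompact $a$-direction by splitting into two regimes. Chebyshev covers $|a|\ge 2M$. A sequential-compactness contradiction on the sphere times $[-2M,2M]$ covers the rest, using pointwise convergence of $q_k^\top w-a_k$, Fatou's lemma on indicators, and then nondegeneracy of $q^\top w$.

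\textbf{Comparison.} Both proofs rely only on nondegeneracy of each projection plus compactness. The paper's argument is direct and expresses both constants through a single quantity. Yours avoids the auxiliary copy, at the cost of two regimes and a nonconstructive choice of $k$ and $p$.

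\textbf{Presentation.} The Paley--Zygmund-type display you introduce as the core step plays no role in the final argument. With only second moments it gives no lower bound on the probability, as you note yourself, so it can be dropped. Also, in your last paragraph $\Pp(|Y_k|\ge\delta)$ tends to zero rather than eventually vanishing.
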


\begin{proof}
Let $w'$ be an independent copy of $w$, and put $D\triangleq w-w'$. The function $q\mapsto\E\min\{(q^\top D)^2,1\}$ is continuous by dominated convergence and positive on the unit sphere because $\E(q^\top D)^2=2q^\top Wq>0$. Compactness therefore gives
\begin{equation*}
 m\triangleq\min_{\norm{q}=1}\E\min\{(q^\top D)^2,1\}>0.
\end{equation*}
Set $b\triangleq\sqrt{m/2}$. Since $m\le b^2+\Pp(|q^\top D|\ge b)$, for every unit $q$ and every $a\in\R$,
\begin{equation*}
 m/2\le\Pp(|q^\top D|\ge b)
 \le2\Pp(|q^\top w-a|\ge b/2).
\end{equation*}
The last inequality follows from the triangle inequality and the identical laws of $w,w'$. Thus $a_0=\sqrt{m/8}$ and $p_0=m/4$ suffice.
\end{proof}

\subsection{Lower-tail bounds for Gram matrices and projected OLS}\label{app:ols}
For one epoch, use time relative to its deterministic start and write
\begin{equation*}
 x_{t+1}=Gx_t+w_{t+1}.
\end{equation*}
Here \(G\) is the closed-loop matrix held fixed during an epoch of length \(N\). Omit the first regressor, let \(V_N=\sum_{t=1}^{N-1}x_tx_t^\top\), and denote by \(\widehat G_N\) the OLS estimate in \eqref{eq:app-OLSdef}, projected onto \(\cG\).

\begin{lemma}\label{lem:OLS}
Under \cref{ass:noise-moments,ass:regular}, there is \(C_G<\infty\), depending on \(x_0\), the fixed noise law and nominal neighborhood, such that every FCE epoch of integer length \(N\ge4\) satisfies, uniformly over the local parameter class,
\begin{equation}
 \E\fnorm{\widehat G_N-G}^2\le \frac{C_G}{N}.                \label{eq:OLS-risk}
\end{equation}
The expectation also averages over the history at the epoch start.
\end{lemma}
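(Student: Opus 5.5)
We write the projected estimate as \(\widehat G_N=\proj_\cG(\widetilde G_N)\) with \(\widetilde G_N=(\sum_{t=1}^{N-1}x_{t+1}x_t^\top)V_N^\dagger\). Since \(G\in\cG\) on the local class and projection onto a convex set is nonexpansive, \(\fnorm{\widehat G_N-G}\le\min\{\fnorm{\widetilde G_N-G},\operatorname{diam}\cG\}\). The error therefore splits on a good event \(\mathcal E\), where the Gram matrix satisfies \(V_N\succeq cN I\), and its complement. On \(\mathcal E\) we have \(\widetilde G_N-G=\bigl(\sum_t w_{t+1}x_t^\top\bigr)V_N^{-1}\). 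On \(\mathcal E^c\) we use the crude bound \(\operatorname{diam}(\cG)^2\), so that piece contributes at most \(C\,\Pp(\mathcal E^c)\). We then need \(\Pp(\mathcal E^c)\le C/N\) together with a bound of order \(1/N\) on the good-event term.

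For the good-event term, \(\fnorm{(\sum_t w_{t+1}x_t^\top)V_N^{-1}}^2\le (cN)^{-2}\fnorm{\sum_t w_{t+1}x_t^\top}^2\) on \(\mathcal E\). The sum \(\sum_t w_{t+1}x_t^\top\) is a martingale in \(t\), because \(w_{t+1}\) is independent of \(\mathcal F_t\) and has mean zero. Its second moment is therefore \(\sum_t\E\norm{w_{t+1}}^2\norm{x_t}^2=\tr(W)\sum_t\E\norm{x_t}^2\). By \eqref{eq:uniform-state} this is at most \(CN\), which holds with the history averaged as the statement requires. The good-event contribution is then \(O(1/N)\). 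This step uses only second moments, which is all that \cref{ass:noise-moments} provides.

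The main obstacle is the lower tail of \(V_N\) using only second moments; sub-Gaussian concentration is not available. I would use a small-ball (Mendelson-type) argument built on \cref{lem:location-smallball}. Fix a unit \(q\). Conditional on \(\mathcal F_{t-1}\), we have \(q^\top x_t=q^\top Gx_{t-1}+q^\top w_t\), a noise projection shifted by a predictable amount. This is why the first regressor is omitted: every regressor then contains fresh noise. Consequently \(\Pp(|q^\top x_t|\ge a_0\mid\mathcal F_{t-1})\ge p_0\).

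The indicators \(\mathbf 1\{|q^\top x_t|\ge a_0\}\) therefore dominate a sequence with conditional mean at least \(p_0\). A martingale Chebyshev bound on their centered sum, whose increments are bounded by \(1\), gives \(\Pp\bigl(\sum_t\mathbf 1\{\cdot\}<p_0N/2\bigr)\le C/N\). Only a \(1/N\) rate is needed, so Chebyshev suffices and no exponential bound is required. On the complementary event, \(q^\top V_Nq\ge a_0^2p_0N/2\).

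To make this uniform over \(q\), take an \(\epsilon\)-net of the sphere of fixed size. Combine it with an upper bound \(\norm{V_N}\le C'N\), which fails with probability at most \(\E\tr V_N/(C'N)\). Markov's inequality alone gives only \(O(1)\) here, not \(O(1/N)\), so this failure probability needs a second look.

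I would instead avoid the operator-norm bound and write \(q^\top V_Nq\ge\tfrac12 q'^\top V_Nq'-\norm{q-q'}^2\norm{V_N}\). The \(\norm{V_N}\)-failure event contributes \(\E[\operatorname{diam}^2\mathbf 1]\), and I would handle it through the identity \(\fnorm{\widetilde G-G}^2\le\fnorm{\sum_t w_{t+1}x_t^\top}^2/\lambda_{\min}(V_N)^2\).

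If this route becomes messy, I would fall back to a cleaner bound. Take \(\lambda_{\min}(V_N)\ge\min_{q\in\text{net}}q^\top V_Nq-\epsilon\tr V_N\), and control \(\tr V_N\) in \(L^2\) through fourth moments. That would require finite fourth moments, however, so I prefer the direct small-ball approach combined with the net and a Markov bound on \(\tr V_N/N\), accepting a failure probability \(\epsilon C_x/C''\). Choosing \(\epsilon\) small makes this probability a constant; I would then check whether this still yields \(1/N\). This is the step I expect to require the most care.
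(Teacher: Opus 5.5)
Your outer decomposition matches the paper's. Nonexpansiveness of projection onto $\cG$ caps the error by $\operatorname{diam}(\cG)^2$ off the Gram event. On $\{V_N\succeq cNI\}$, the martingale identity $\E\fnorm{S_N}^2=\sum_t\E[\norm{w_{t+1}}^2\norm{x_t}^2]\le CN$ gives the $O(1/N)$ term.

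The gap is the step you flag yourself, and as written it is not closed: you never obtain $\Pp(V_N\nsucceq cNI)=O(1/N)$. With a fixed-size net you need $\opnorm{V_N}\le C'N$ outside an event of probability $O(1/N)$, but with only second moments Markov gives merely a constant. A bad event of constant probability contributes $\Theta(1)\cdot\operatorname{diam}(\cG)^2$ to the risk, so your final fallback yields an $O(1)$ bound, not $C_G/N$. Going the other way, a net fine enough to absorb a crude norm bound has cardinality polynomial in $N$, with degree growing with $n_x$. Your per-direction Chebyshev bound $C/N$ does not survive a union bound over such a net. Your claim that ``Chebyshev suffices and no exponential bound is required'' is exactly where the argument breaks.

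The missing idea is that strong concentration is available despite the weak moment assumption, because the summands are bounded. The indicators $\mathbf 1\{|q^\top x_t|\ge a_0\}$ minus their conditional means form bounded martingale differences. Azuma--Hoeffding therefore gives $\Pp(q^\top V_Nq<c_1N)\le e^{-cN}$, with $c_1=a_0^2p_0/2$, for each fixed unit $q$ and regardless of the tails of $w$. Your concern about sub-Gaussianity applies to the states, not to these indicators.

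The paper (\cref{lem:app-Gram}) then trades net mesh against a crude norm threshold:
\begin{itemize}
\item Markov's inequality with $\E\tr V_N\le CN$ gives $\Pp(\opnorm{V_N}>N^3)\le CN^{-2}$.
\item A $c_1/(4N^2)$-net has polynomially many points, so the union bound costs only $\mathrm{poly}(N)e^{-cN}$.
\item On the intersection of these events, $u^\top V_Nu\ge q^\top V_Nq-2\norm{u-q}\opnorm{V_N}\ge c_1N/2$ for every unit $u$.
\end{itemize}
This yields $\Pp(V_N\nsucceq c_gNI)\le C_gN^{-2}$, more than enough. It uses only the first moment of $\tr V_N$, which \eqref{eq:uniform-state} supplies.
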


To prove \cref{lem:OLS}, we first bound the probability that the empirical Gram matrix has a small eigenvalue. The same bound is used in Appendix~\ref{app:blockgram}.

\begin{lemma}\label{lem:app-Gram}
Let \(I=[\ell,b)\), with integers \(1\le\ell<b\), be a deterministic block of \(n=b-\ell\) states, and set \(V_I=\sum_{t\in I}x_tx_t^\top\). Suppose that, for fixed \(a_0,p_0,C>0\),
\begin{equation*}
 \Pp(|q^\top x_t|\ge a_0\mid\mathcal F_{t-1})\ge p_0,
 \qquad \E\tr(V_I)\le Cn,
\end{equation*}
for every unit \(q\) and \(t\in I\). There are \(c_g,C_g,n_g>0\), depending only on these constants and the state dimension, such that
\begin{equation}
 \Pp\!\left(V_I\nsucceq c_gnI_{n_x}\right)
 \le C_gn^{-2},\qquad n\ge n_g.                              \label{eq:app-Gram}
\end{equation}
\end{lemma}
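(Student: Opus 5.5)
The plan is a small-ball (Mendelson-type) argument combined with a union bound over an \(\epsilon\)-net of unit directions.

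Fix a unit vector \(q\). Set \(Y_t\triangleq\mathbf 1\{|q^\top x_t|\ge a_0\}\) and \(p_t\triangleq\Pp(Y_t=1\mid\mathcal F_{t-1})\ge p_0\). The increments \(Y_t-p_t\) are bounded martingale differences, so Azuma--Hoeffding (or Freedman) gives
\[
\Pp\Bigl(\sum_{t\in I}Y_t\le p_0n/2\Bigr)\le\exp(-p_0^2n/8).
\]
On the complementary event,
\[
q^\top V_Iq\ge a_0^2\sum_{t\in I}Y_t\ge a_0^2p_0n/2 .
\]
This controls a single fixed direction with exponentially small failure probability.

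To pass to all directions, cover the unit sphere by an \(\epsilon\)-net \(\mathcal N_\epsilon\) with \(|\mathcal N_\epsilon|\le(3/\epsilon)^{n_x}\). For unit \(q\) and a net point \(q'\) with \(\norm{q-q'}\le\epsilon\),
\[
q^\top V_Iq\ge q'^\top V_Iq'-2\epsilon\opnorm{V_I}\ge q'^\top V_Iq'-2\epsilon\tr(V_I).
\]
Markov's inequality only gives \(\Pp(\tr V_I>Mn)\le C/M\), which is polynomial in \(n\) and at a fixed \(\epsilon\) not \(O(n^{-2})\). The remedy is to let the net depend on \(n\). Take \(M=n^2\) and \(\epsilon=a_0^2p_0/(8n^2)\). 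Then
\[
\Pp(\tr V_I>n^3)\le C/n^2 .
\]
On the event \(\{\tr V_I\le n^3\}\) together with the net event, every unit \(q\) satisfies
\[
q^\top V_Iq\ge a_0^2p_0n/2-2\epsilon n^3=a_0^2p_0n/4 .
\]
The union bound over the net costs \((Cn^2)^{n_x}e^{-p_0^2n/8}\), which is at most \(n^{-2}\) once \(n\ge n_g\). So the net cardinality grows only polynomially while the per-direction failure probability decays exponentially, and the claim follows with \(c_g=a_0^2p_0/4\) and \(C_g=C+1\).

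The step I expect to need the most care is this mismatch. The trace moment assumption gives only polynomial tail control of \(\opnorm{V_I}\), and the fix is to choose the net resolution as a negative power of \(n\) so the net approximation error is absorbed. A secondary check is the martingale concentration, which requires \(q\) deterministic (fixed before the union bound). The argument uses only the two stated hypotheses, so the constants depend only on \(a_0,p_0,C\) and \(n_x\).
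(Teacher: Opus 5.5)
Your proposal is correct and matches the paper's proof essentially step for step. The paper also applies Azuma--Hoeffding to the centered indicators for a fixed direction, uses Markov's inequality on the event $\opnorm{V_I}>n^3$ (your $\tr(V_I)>n^3$ serves the same purpose), and takes a net of resolution $a_0^2p_0/(8n^2)$ whose polynomial cardinality is beaten by the exponential per-direction bound, giving the same constant $c_g=a_0^2p_0/4$.
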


\begin{proof}
For a fixed unit \(q\), apply Azuma--Hoeffding to the centered indicators of \(\{|q^\top x_t|\ge a_0\}\). This gives
\begin{equation}
 \Pp\!\left(q^\top V_Iq<\frac{a_0^2p_0n}{2}\right)
 \le e^{-cn}.                                               \label{eq:app-fixedq}
\end{equation}
Put \(c_1=a_0^2p_0/2\). Markov's inequality gives \(\Pp(\opnorm{V_I}>n^3)\le Cn^{-2}\). Take a \(c_1/(4n^2)\)-net of the unit sphere, of polynomial cardinality in \(n\) for fixed state dimension. By \eqref{eq:app-fixedq} and a union bound, all net directions satisfy \(q^\top V_Iq\ge c_1n\) except on an exponentially small event. When this bound and \(\opnorm{V_I}\le n^3\) hold, each unit \(u\) and its nearest net point \(q\) satisfy
\begin{equation*}
 u^\top V_Iu\ge q^\top V_Iq
 -2\norm{u-q}\opnorm{V_I}\ge c_1n/2.
\end{equation*}
Combining the exceptional probabilities proves \eqref{eq:app-Gram}. This is the small-ball argument of \cite{simchowitz2018learning}, with a trace bound supplying the upper tail.
\end{proof}

\begin{proof}[Proof of \cref{lem:OLS}]
At the deterministic epoch start \(\tau\), \(G\) is \(\mathcal F_\tau\)-measurable. All indices below are relative to \(\tau\). Fresh noise and \cref{lem:location-smallball} give the conditional probability bound in \cref{lem:app-Gram}, and \eqref{eq:uniform-state} gives \(\E\tr(V_N)\le CN\). Apply that lemma with \(n=N-1\). After adjusting constants, there is \(N_g<\infty\) such that \(\Pp(V_N\nsucceq c_gNI_{n_x})\le C_gN^{-2}\) for \(N\ge N_g\).

Let
\begin{equation}
 \widetilde G_N=\left(\sum_{t=1}^{N-1}x_{t+1}x_t^\top\right)V_N^\dagger. \label{eq:app-OLSdef}
\end{equation}
On the event \(V_N\succeq c_gNI_{n_x}\),
\begin{equation*}
 \widetilde G_N-G=S_NV_N^{-1},
 \qquad S_N\triangleq \sum_{t=1}^{N-1}w_{t+1}x_t^\top.
\end{equation*}
Since \(G\in\cG\), projection cannot increase the estimation error, so
\begin{equation*}
 \fnorm{\widehat G_N-G}^2\le \frac{\fnorm{S_N}^2}{c_g^2N^2}. 
\end{equation*}
The summands \(w_{t+1}x_t^\top\) are martingale differences, so their cross terms have zero expectation. Independence of \(w_{t+1}\) from \(x_t\) and \eqref{eq:uniform-state} then give
\begin{equation}
 \E\fnorm{S_N}^2
 =\sum_{t=1}^{N-1}\E[\norm{w_{t+1}}^2\norm{x_t}^2]\le CN.  \label{eq:app-SN}
\end{equation}
On the bad event, projection bounds the squared error by $\operatorname{diam}(\cG)^2$, where \(\operatorname{diam}\) denotes diameter in the Frobenius norm. Combining \eqref{eq:app-Gram}--\eqref{eq:app-SN} proves \eqref{eq:OLS-risk} uniformly over FCE epoch matrices for \(N\ge N_g\). The same projection bound covers the finitely many \(4\le N<N_g\) after increasing \(C_G\).
\end{proof}

\section{Information Inequalities and Regret Lower Bounds}\label{app:lower-tech}

\subsection{Fisher information and the van Trees inequality}\label{app:info}
\begin{lemma}\label{lem:sequential-DQM}
Under \cref{ass:noise-moments,ass:location-noise}, \(J_w\succ0\). Fix a finite \(t\), a scalar affine submodel \(\theta_s=\theta_0+sv\), and any causal policy represented by parameter-independent stochastic kernels. If
\(
\E_s\sum_{k<t}\norm{A[v]x_k+B[v]u_k}^2<\infty
\),
then the trajectory experiment through time \(t\) is differentiable in quadratic mean (DQM) at \(s\). Put \(d_k(v)\triangleq A[v]x_k+B[v]u_k\). Its score is
\[
 S_t(s)=\sum_{k=0}^{t-1}d_k(v)^\top\psi(w_{k+1}),
\]
and its Fisher information is
\begin{equation}
 I_t^\pi(s;v)
 =\E_s\sum_{k=0}^{t-1}
 d_k(v)^\top J_wd_k(v).                                    \label{eq:direction-FI}
\end{equation}
\end{lemma}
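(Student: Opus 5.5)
The proof has two parts. First, \(J_w\succ0\) follows from a Fisher-information identity for location families. Second, the DQM property of the trajectory experiment is reduced to a single-step location DQM and then chained through the causal structure.

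For \(J_w\succ0\), suppose \(q^\top J_wq=0\) for some unit \(q\). Then \(q^\top\nabla f=0\) almost everywhere. Integrating against test functions shows that \(f\) is constant along the direction \(q\) in the weak sense. A function of this kind that is square-integrable on \(\R^{n_x}\) must vanish almost everywhere. This contradicts \(\int f^2=1\).

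An alternative route uses the identity \(\E[\psi(w)w^\top]=I\). It is obtained by integrating \(-2f\nabla f\,w^\top = -\nabla p\, w^\top\) by parts, with a truncation argument justified by \(\E\norm{w}^2<\infty\) and \(\nabla f\in L^2\). Cauchy--Schwarz then gives \(1=(q^\top\E[\psi w^\top]q)^2\le q^\top J_wq\cdot q^\top Wq\), so \(q^\top J_w q\ge 1/(q^\top W q)>0\). I would use this second route. It also shows that the score is well defined, because \(\psi\in L^2\).

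For DQM, I use the standard fact (Le Cam; Hájek) that \(\nabla f\in L^2\) implies the location family \(\{p(\cdot-h)\}\) is DQM in \(h\). The precise statement is \(\int|f(w-h)-f(w)+h^\top\nabla f(w)|^2dw=o(\norm{h}^2)\), and it holds locally uniformly. The proof writes the difference as \(\int_0^1 h^\top(\nabla f(w-\tau h)-\nabla f(w))d\tau\) and uses continuity of translation in \(L^2\).

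Next, under \(\theta_s\) the transition density of \(x_{k+1}\) given \(\mathcal F_k\) and \(u_k\) is \(p(x_{k+1}-A_{\theta_s}x_k-B_{\theta_s}u_k)\). Perturbing \(s\) to \(s+\delta\) shifts the noise location by \(\delta d_k(v)\). The policy kernels and the randomization do not depend on \(s\), so the trajectory density is a product of these location factors with parameter-free factors. I would then apply the product/chain rule for DQM, taking conditional expectations one step at a time over the \(t\) steps.

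The main obstacle is that the shift \(d_k(v)\) is random and unbounded, so the single-step \(o(\delta^2)\) remainder must be controlled in expectation. For this I use the bound \(\int|f(w-h)-f(w)+h^\top\nabla f|^2\le \norm{h}^2\int_0^1\!\norm{\nabla f(\cdot-\tau h)-\nabla f}^2 d\tau\). This bound is at most \(4\norm{h}^2\int\norm{\nabla f}^2\), and the ratio to \(\norm{h}^2\) tends to zero for each fixed \(h\). With \(h=\delta d_k\), I divide by \(\delta^2\) and take expectations. Dominated convergence applies, with dominating function \(C\norm{d_k}^2\), which is integrable by the hypothesis \(\E_s\sum\norm{d_k}^2<\infty\).

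Two further technical points arise in this step. Because the experiment runs along the line, the trajectory law changes with \(s\), so expectations are needed at nearby parameters as well as at \(s\). I would handle this by a Hellinger-telescoping argument: the squared Hellinger distance of the product is bounded by a sum of conditional one-step Hellinger distances, which are evaluated under the measure at \(s\) itself. The second point is the cross terms in the expansion. They vanish because \(\E[\psi(w_{k+1})\mid\mathcal F_k,u_k]=0\), which follows from \(\int\nabla p=0\).

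Once DQM holds with score \(S_t(s)=\sum_k d_k^\top\psi(w_{k+1})\), the information identity \eqref{eq:direction-FI} follows. The terms of \(S_t(s)\) are martingale differences, so cross terms have zero expectation. Each diagonal term equals \(\E_s[d_k^\top\E[\psi\psi^\top]d_k]=\E_s d_k^\top J_w d_k\), using that \(w_{k+1}\) is independent of \((\mathcal F_k,u_k)\), which determine \(d_k\).
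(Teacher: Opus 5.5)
Your proposal is correct and follows essentially the same route as the paper. You obtain \(\E[\psi(w)w^\top]=I\) by integration by parts with cutoffs; the paper turns this into the matrix bound \(J_w\succeq W^{-1}\), while your directional Cauchy--Schwarz bound suffices equally. You then use the one-step bound \(4h^2\norm{d}^2\norm{\nabla f}_2^2\) with dominated convergence under \(\Pp_s\), chain over the \(t\) steps, and derive \eqref{eq:direction-FI} from martingale-difference orthogonality. Your Hellinger-telescoping fix for nearby-parameter expectations plays the role of the paper's three-term product identity, where conditional normalization of the perturbed next-step factor keeps every expectation under \(\Pp_s\).
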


\begin{proof}
Since \(p=f^2\), its weak gradient is \(\nabla p=2f\nabla f=-p\psi\). Weak integration by parts against smooth truncations of \(1,w_1,\ldots,w_{n_x}\) gives \(\E\psi(w)=0\) and \(\E[w\psi(w)^\top]=I\). Indeed, \cref{ass:noise-moments} and Cauchy--Schwarz make \((1+\norm{w})\norm{\nabla p(w)}\) integrable. Cutoffs equal to one on the ball of radius \(L\), zero outside the ball of radius \(2L\), and with gradient bounded by \(C/L\) have boundary terms tending to zero as \(L\to\infty\). Therefore
\begin{equation*}
 \begin{aligned}
 &\E\!\left[(\psi(w)-W^{-1}w)(\psi(w)-W^{-1}w)^\top\right]\\
 &\qquad=J_w-W^{-1}\succeq0,
 \end{aligned}
\end{equation*}
so \(J_w\succeq W^{-1}\succ0\).

Write $A_s=A_{\theta_s}$, $B_s=B_{\theta_s}$, and $h_k=(x_0,u_0,\ldots,u_{k-1},x_k)$. Denote the policy kernels by $\pi_k(du_k\mid h_k)$, which are independent of $s$.

For a finite time $t$, choose a reference density $r$ positive everywhere on $\R^{n_x}$. With $x_0$ held fixed, the measure on the remaining trajectory coordinates
\begin{equation*}
 \mu_t^\pi(dh_t)
 \triangleq \prod_{k=0}^{t-1}
 \pi_k(du_k\mid h_k)r(x_{k+1})dx_{k+1}
\end{equation*}
depends on the fixed policy $\pi$ but not on $s$, and dominates every trajectory law $\Pp_s$ through time $t$, with density
\begin{equation}
 \ell_s(h_t)
 =\prod_{k=0}^{t-1}
 \frac{p(x_{k+1}-A_sx_k-B_su_k)}{r(x_{k+1})}.                \label{eq:app-dominated-density}
\end{equation}
For a scalar or vector function \(g\), write \(\norm{g}_2^2\triangleq \int\norm{g(w)}^2\,dw\) for its squared \(L^2\) norm. For every deterministic vector \(d\in\R^{n_x}\) and scalar parameter increment \(h\),
\begin{equation*}
 \norm{f(\cdot-hd)-f+h\,d^\top\nabla f}_2
 \le2|h|\norm{d}\norm{\nabla f}_2,
\end{equation*}
and the left side is $o(|h|)$ for fixed $d$. Conditional on the history and current input, take $d=d_k(v)$. Squaring this bound and averaging under the base law $\Pp_s$, the finite energy assumption gives
\begin{equation*}
 \E_s\norm{f(\cdot-hd_k)-f+h\,d_k^\top\nabla f}_2^2=o(h^2).
\end{equation*}
Here $d_k=d_k(v)$, and dominated convergence uses the integrable bound $4\norm{d_k}^2\norm{\nabla f}_2^2$ after division by $h^2$.

We now extend this conditional DQM property to the entire trajectory by adding one observation at a time. At parameter $s+h$, let $a_h$ be the square root of the density of the history and current input, and let $b_h$ be the square root of the conditional density of the next state. These densities are relative to the corresponding factors of $\mu_t^\pi$, so $a_hb_h$ is the square root of the density after the next observation. Write $a=a_0$, $b=b_0$, with $L^2$ derivatives $a',b'$ at $h=0$. Subtracting the first-order product expansion gives
\begin{align*}
 &a_hb_h-ab-h(a'b+ab')\\
 &\quad=(a_h-a-ha')b_h+h a'(b_h-b)\\
 &\qquad\quad+a(b_h-b-hb').
\end{align*}
The first term is $o(|h|)$ in $L^2$ by induction and conditional normalization of $b_h$. The second is $o(|h|)$ because the conditional integral of $(b_h-b)^2$ tends to zero and is bounded by $4$, while $a'$ is square integrable. The last term is $o(|h|)$ by the preceding bound under \(\Pp_s\). Thus the product has the required first-order expansion. Appending the policy kernel leaves the history remainder unchanged because the kernel is independent of $s$. Starting from fixed $x_0$, induction proves trajectory DQM, with score $S_t(s)$ obtained by summing the conditional scores.

Here $w_{k+1}=x_{k+1}-A_sx_k-B_su_k$ under $\Pp_s$. Since $\E\psi(w)=0$, the summands of $S_t(s)$ are martingale differences. Their cross terms vanish, and conditional second moments give \eqref{eq:direction-FI}.
\end{proof}

We use the following form of the van Trees inequality.
On an interval, \(H^1\) denotes the square-integrable functions with square-integrable weak derivative, and \(H_0^1\) consists of those functions whose endpoint values vanish.
\begin{lemma}\label{lem:van-Trees}
Let \(\{P_s:s\in S\}\) be probability laws indexed by a real interval \(S\), with densities \(p_s\) relative to a common measure. Assume that \(s\mapsto p_s\) is weakly absolutely continuous with weak derivative \(p_sS_s\), where the score \(S_s\) is square integrable and has mean zero under \(P_s\). Let \(s\) have a density \(\lambda\) on a compact subinterval with \(\sqrt\lambda\in H_0^1\), let \(Y\sim P_s\) have Fisher information \(I_Y(s)=\E_sS_s^2\), and let \(g:S\to\R\) be absolutely continuous. Here \(\E_s\) denotes expectation under \(P_s\), and unindexed expectations use the joint prior and observation law. If the prior-averaged information is finite, every estimator \(\widehat g(Y)\) satisfies
\begin{equation}
 \E(\widehat g-g(s))^2
 \ge \frac{(\E g'(s))^2}{\E I_Y(s)+J(\lambda)}.       \label{eq:van-Trees}
\end{equation}
\end{lemma}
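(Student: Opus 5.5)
The argument follows the classical route of Gill and Levit: integrate by parts against the prior, then apply Cauchy--Schwarz. Write \(\E\) for the joint expectation under \(\lambda(s)p_s(y)\,ds\,\mu(dy)\) on the compact prior interval \([\alpha,\beta]\). Consider the joint score \(U(s,y)\triangleq S_s(y)+\lambda'(s)/\lambda(s)\), set to zero where \(\lambda=0\). The key identity to establish is
\[
 \E\bigl[(\widehat g(Y)-g(s))\,U\bigr]=\E g'(s).
\]
Given this identity, Cauchy--Schwarz yields \((\E g'(s))^2\le\E(\widehat g-g(s))^2\,\E U^2\). It then remains to compute \(\E U^2\). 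The cross term is \(\E[S_s\lambda'(s)/\lambda(s)]=0\), because the score has conditional mean zero under \(P_s\). Consequently \(\E U^2=\E I_Y(s)+J(\lambda)\). We may assume the left side of \eqref{eq:van-Trees} is finite, since otherwise there is nothing to prove, and we may assume \(\E I_Y(s)+J(\lambda)>0\).

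To prove the identity, fix \(y\) and consider \(s\mapsto\lambda(s)p_s(y)\). Its weak derivative is \(\lambda'(s)p_s(y)+\lambda(s)p_s(y)S_s(y)=\lambda(s)p_s(y)U(s,y)\), using the assumed weak absolute continuity of \(p_s\) and the fact that \(\lambda=(\sqrt\lambda)^2\) is absolutely continuous with \(\lambda'=2\sqrt\lambda(\sqrt\lambda)'\). Multiplying by \(\widehat g(y)-g(s)\) and integrating by parts over \([\alpha,\beta]\), the boundary terms vanish because \(\sqrt\lambda\in H_0^1\) forces \(\lambda(\alpha)=\lambda(\beta)=0\). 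What remains is
\[
 \int\widehat g(y)-g(s)\,\partial_s(\lambda p_s)\,ds=\int g'(s)\lambda(s)p_s(y)\,ds .
\]
The \(\widehat g\) term drops out: integrating \(\partial_s(\lambda p_s)\) in \(s\) gives zero for every \(y\). Integrating the right side over \(y\) against \(\mu\) then gives \(\E g'(s)\), since \(\int p_s\,d\mu=1\).

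\textbf{Main obstacle.} The interchange of integrals and the product rule for the weak derivatives must be justified without extra smoothness. The plan is to use \(|U|\lambda p_s\) together with the finiteness of \(\E(\widehat g-g)^2\) and \(\E U^2\), via Cauchy--Schwarz, to obtain joint integrability of \((\widehat g-g)\,U\,\lambda p_s\). Fubini is then legitimate. For the \(\widehat g\) term, the equation \(\int\partial_s(\lambda p_s)\,ds=0\) needs to hold for \(\mu\)-almost every \(y\), and I would justify it from absolute continuity on the compact interval with vanishing endpoints. If \(\widehat g\) is unbounded, this step is delicate. The first thing to try is to truncate \(\widehat g\) at level \(M\), prove the identity for the truncation, and pass to the limit by dominated convergence, which is allowed because \(\E|\widehat g\,U|<\infty\).

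If that truncation argument runs into trouble, the fallback is to truncate \(\lambda\) instead. One would replace \(\sqrt\lambda\) by smooth compactly supported approximants in \(H_0^1\), which converge in \(H^1\) so that \(J\) converges. The inequality would be proved for each approximant and then passed to the limit.
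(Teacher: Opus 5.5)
Your proposal is correct and follows the paper's proof: integrate by parts against the joint law \(\lambda(s)P_s(dY)\), using the vanishing endpoint values of \(\lambda\), to get \(\E[(\widehat g-g(s))(S_s+\lambda'/\lambda)]=\E g'(s)\); then use orthogonality of the likelihood and prior scores to get second moment \(\E I_Y(s)+J(\lambda)\), and conclude by Cauchy--Schwarz. Your integrability discussion supplies more detail than the paper, and the truncation fallbacks are not needed. For fixed \(y\), the factor \(\widehat g(y)-g(s)\) is bounded and absolutely continuous in \(s\) on the compact prior support, so the integration by parts holds for \(\mu\)-almost every \(y\) without splitting off the \(\widehat g\) term, and the Fubini step follows from Cauchy--Schwarz applied to \((\widehat g-g(s))U\), exactly as you note.
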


\begin{proof}
It suffices to consider estimators with finite mean squared error. Weak integration by parts under the joint law \(\lambda(s)P_s(dY)\), with zero boundary trace of \(\lambda\), gives
\begin{equation*}
 \E\!\left[(\widehat g-g(s))
 (S_s(Y)+\lambda'(s)/\lambda(s))\right]
 =\E g'(s).
\end{equation*}
The likelihood score has conditional mean zero and is orthogonal to the prior score. The second moment of their sum is therefore \(\E I_Y(s)+J(\lambda)\), so Cauchy--Schwarz proves \eqref{eq:van-Trees} (see \cite{vantrees2001,gill1995applications}).
\end{proof}

Choose once and for all a smooth density \(\lambda_0\) supported in \((-1,1)\) and satisfying \(J(\lambda_0)<\infty\). On the local line \(|s|\le aT^{-1/4}\), let
\begin{equation}
 \begin{aligned}
 \lambda_T(s)&=\frac{T^{1/4}}{a}\lambda_0\!\left(\frac{T^{1/4}s}{a}\right),\\
 J(\lambda_T)&=a^{-2}\sqrt T\,J(\lambda_0)=\Theta(\sqrt T).
 \end{aligned}                                             \label{eq:scaled-prior}
\end{equation}
This is the source of the $\sqrt T$ prior information term in both local lower bounds.

For each prior used in the lower bounds, assume first that the Bayes regret \(B_T(\lambda)\) is finite. Integrating \eqref{eq:generic-info} or \eqref{eq:null-info} over the prior gives finite prior-averaged Fisher information. The square-integrable weak gradient of \(f\) gives weak absolute continuity in \(s\) for \eqref{eq:app-dominated-density}, with weak derivative \(\ell_sS_t(s)\). The square integrability under the joint prior and observation law established in \eqref{eq:direction-FI} then justifies Fubini's theorem and weak integration by parts by Cauchy--Schwarz.

Take \(Y=(x_0,u_0,\ldots,u_{b-1},x_b)\) for the scalar function and projection in Section~\ref{sec:lower-reduction}. The projected estimator \(\widehat K_I\) is \(Y\)-measurable, so its scalar projection is admissible in \cref{lem:van-Trees}. With \(I_Y(s)=I_b^\pi(s;v)\), this verifies the application yielding \eqref{eq:controller-vt}.

\subsection{Early residual and energy bounds}\label{app:prefix-energy}
Use the local submodel \(\theta_s\), fixed policy \(\pi\), and conventions \(R_T(s),K_s,\E_s\) of Section~\ref{sec:lower-reduction}.
\begin{lemma}\label{lem:prefix-energy}
Under \cref{ass:noise-moments,ass:regular}, there is a constant \(C_E<\infty\), independent of the policy, such that, for every integer \(0\le t\le T/2\), every sufficiently large \(T\), and every local \(s\),
\begin{align}
 \E_s\sum_{k=0}^{t-1}\norm{u_k+K_sx_k}^2
 &\le C_E(R_T(s)+1),                                         \label{eq:residual-prefix}\\
 \E_s\sum_{k=0}^{t-1}(\norm{x_k}^2+\norm{u_k}^2)
 &\le C_E(t+R_T(s)+1).                                       \label{eq:energy-prefix}
\end{align}
\end{lemma}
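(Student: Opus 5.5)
Both bounds come from the finite-horizon performance difference identity \eqref{eq:FH-PDI}, which is exact for any policy with finite cost. We compare the time-varying gains \(K_{s,T-k}\) with the stationary gain \(K_s\) using the uniform Riccati convergence \eqref{eq:riccati-exp}. For \(k\le t-1<T/2\), the remaining horizon \(T-k\) exceeds \(T/2\), so \(\norm{K_{s,T-k}-K_s}\le C_b\beta^{T/2}\), uniformly over the local line \(\theta_s\in U_0\) once \(T\) is large. Also \(S_{s,T-k}\succeq R\succ0\). Write
\begin{equation*}
 u_k+K_sx_k=(u_k+K_{s,T-k}x_k)+(K_s-K_{s,T-k})x_k .
\end{equation*}
This gives
\begin{equation*}
 \norm{u_k+K_sx_k}^2\le 2\lambda_{\min}(R)^{-1}\norm{u_k+K_{s,T-k}x_k}_{S_{s,T-k}}^2+2C_b^2\beta^{T}\norm{x_k}^2 .
\end{equation*}
Summing over \(k<t\) and applying \eqref{eq:FH-PDI} bounds the first part by \(C R_T(s)\). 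The second part is \(C\beta^T\sum_{k<t}\E_s\norm{x_k}^2\), so we need a state energy bound.

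For the energy, use the stable closed loop \(F_s=A_s-B_sK_s\). A common Lyapunov matrix \(H\) exists on the local line by continuity near \(\theta_0\), as in \cref{lem:common-lyap}. Write
\begin{equation*}
 x_{k+1}=F_sx_k+B_s e_k+w_{k+1},\qquad e_k\triangleq u_k+K_sx_k .
\end{equation*}
The \(H\)-norm drift gives
\begin{equation*}
 \E\, x_{k+1}^\top Hx_{k+1}\le(1-\gamma)\,\E\, x_k^\top Hx_k+C\,\E\norm{e_k}^2+C .
\end{equation*}
We absorb the cross term \(2x_k^\top F_s^\top HB_se_k\) by Young's inequality, using part of the \(\alpha\) margin; the noise cross term has zero mean since \(w_{k+1}\) is independent of \(\mathcal F_k\). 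Summing over \(k<t\) then gives
\begin{equation*}
 \sum_{k<t}\E\norm{x_k}^2\le C\Bigl(x_0^\top Hx_0+t+\sum_{k<t}\E\norm{e_k}^2\Bigr).
\end{equation*}
Combining with the previous paragraph,
\begin{equation*}
 \sum_{k<t}\E\norm{e_k}^2\le CR_T(s)+C\beta^T\Bigl(t+1+\sum_{k<t}\E\norm{e_k}^2\Bigr).
\end{equation*}
Since \(t\beta^T\le T\beta^T\to0\), for large \(T\) the factor \(C\beta^T<1/2\), so we can absorb the last term and obtain \eqref{eq:residual-prefix}. This absorption requires \(\sum_{k<t}\E\norm{e_k}^2<\infty\), which holds because the policy has finite cost and finite horizon (as noted in the proof of \cref{lem:benchmark}). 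Substituting back gives the state part of \eqref{eq:energy-prefix}. The input part follows from \(\norm{u_k}\le\norm{e_k}+\norm{K_s}\norm{x_k}\), with \(\norm{K_s}\) uniformly bounded.

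The main obstacle is making all constants uniform over the policy and over \(s\). This rests on the uniform constants \(C_b,\beta\) of \cref{lem:benchmark} on \(U_0\), on a single Lyapunov matrix \(H\) for all local \(F_s\), and on the absorption of the \(\beta^T\) term, which must not depend on the unknown size of the residual.
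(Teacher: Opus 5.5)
Your proposal is correct and uses the same ingredients as the paper: the identity \eqref{eq:FH-PDI}, Riccati convergence \eqref{eq:riccati-exp}, and Lyapunov drift with Young's inequality. The paper orders the argument differently. It drives the state recursion with the finite-horizon residual \(u_k+K_{\theta_s,T-k}x_k\) and the time-varying matrices \(A_{\theta_s}-B_{\theta_s}K_{\theta_s,T-k}\), so the state energy is bounded by \(R_T(s)\) directly and the stationary residual bound follows afterwards. Your choice of the stationary residual and \(F_s\) produces a circular bound, which you close by absorption; this is valid because finite cost gives finite second moments.
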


\begin{proof}
Put \(e_k=u_k+K_{\theta_s,T-k}x_k\) for \(0\le k<t\le T/2\). Since \(S_{\theta_s,T-k}\succeq R\succ0\), \eqref{eq:FH-PDI} gives \(\E_s\sum_{k<t}\norm{e_k}^2\le C R_T(s)\).
By \cref{lem:benchmark} and Schur stability of \(F_{\theta_0}\), for all sufficiently large \(T\) the matrices \(A_{\theta_s}-B_{\theta_s}K_{\theta_s,T-k}\), \(k<T/2\), lie in a common stable neighborhood admitting a quadratic Lyapunov function. The dynamics have these matrices and forcing \(B_{\theta_s}e_k+w_{k+1}\). Conditional Lyapunov drift, using Young's inequality for the residual cross term and zero mean of the fresh noise, therefore yields
\begin{equation}
 \E_s\sum_{k<t}\norm{x_k}^2
 \le C\bigl(\norm{x_0}^2+t+R_T(s)\bigr).                   \label{eq:prefix-state-energy}
\end{equation}
Uniform boundedness of the finite-horizon gains and \(u_k=e_k-K_{\theta_s,T-k}x_k\) prove \eqref{eq:energy-prefix}. For the stationary residual, \eqref{eq:riccati-exp} gives
\begin{align*}
 \E_s\sum_{k<t}\norm{u_k+K_sx_k}^2
 &\le C R_T(s)+C\beta^T\E_s\sum_{k<t}\norm{x_k}^2\\
 &\le C\bigl(R_T(s)+1\bigr),
\end{align*}
where the last step uses \eqref{eq:prefix-state-energy}, fixed \(x_0\), and boundedness of \(T\beta^T\). This proves \eqref{eq:residual-prefix} with constants and a horizon threshold independent of \(s,t\) and the policy.
\end{proof}

\subsection{Bounds for block Gram matrices and gain estimation}\label{app:blockgram}
Use the local scalar submodel, policy, and expectation conventions of Section~\ref{sec:lower-reduction}. For a block of integer times $I=[\ell,b)\subset[1,T/2]$, write $n=b-\ell$. Let $V_I$ and the projected estimator $\widehat K_I$ be as defined at \eqref{eq:block-LS}.
\begin{lemma}\label{lem:block-risk}
Under \cref{ass:noise-moments,ass:regular}, there are constants \(n_0,c_B,C_B>0\), independent of \(\pi,\lambda,I,T\), such that, for all sufficiently large \(T\), if \(b\le2n\), \(n\ge n_0\), and \(\E_{s\sim\lambda}R_T(s)\le n\) for a prior supported on the local line, then
\begin{align}
 &\E_{s\sim\lambda}\E_s\sum_{t\in I}\norm{u_t+K_sx_t}^2 \notag\\
 &\qquad\ge c_B n\,\E_{s\sim\lambda}\E_s\fnorm{\widehat K_I-K_s}^2
 -\frac{C_B}{n}.                                             \label{eq:block-risk}
\end{align}
\end{lemma}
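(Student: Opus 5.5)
The plan is to compare the residual \(u_t+K_sx_t\) with the least-squares fit over the block, and to use the Gram lower bound of \cref{lem:app-Gram} to turn the fit error into gain error. Write \(e_t\triangleq u_t+K_sx_t\), so that \(u_t=-K_sx_t+e_t\). Substituting into \eqref{eq:block-LS} gives
\begin{equation*}
 \widetilde K_I=K_s\,V_IV_I^\dagger-\Big(\sum_{t\in I}e_tx_t^\top\Big)V_I^\dagger .
\end{equation*}
On the event \(\mathcal E\triangleq\{V_I\succeq c_gnI_{n_x}\}\), \(V_I\) is invertible and \(\widetilde K_I-K_s=-E_IX_I^\top V_I^{-1}\), where \(E_I\) and \(X_I\) stack the residuals and states. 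Because \(\cK\) is convex and contains \(K_s\), the projection \(\widehat K_I\) is at least as close to \(K_s\) as \(\widetilde K_I\). The least-squares hat operator is a contraction, so \(\fnorm{E_IX_I^\top V_I^{-1/2}}^2\le\sum_{t\in I}\norm{e_t}^2\). Hence on \(\mathcal E\),
\begin{equation*}
 \fnorm{\widehat K_I-K_s}^2\le\opnorm{V_I^{-1}}\sum_{t\in I}\norm{e_t}^2\le\frac{1}{c_gn}\sum_{t\in I}\norm{e_t}^2 .
\end{equation*}
Off \(\mathcal E\), the projection onto the compact ball \(\cK\) bounds the squared error by \(\operatorname{diam}(\cK)^2\). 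Taking expectations under \(\lambda\) and \(\Pp_s\) gives
\begin{equation*}
 n\,\E\fnorm{\widehat K_I-K_s}^2\le c_g^{-1}\E\sum_{t\in I}\norm{e_t}^2+n\operatorname{diam}(\cK)^2\,\E_{s\sim\lambda}\Pp_s(\mathcal E^c).
\end{equation*}
It therefore suffices to prove \(\E_{s\sim\lambda}\Pp_s(\mathcal E^c)\le Cn^{-2}\). Then \eqref{eq:block-risk} follows with \(c_B=c_g\) and \(C_B=c_g\operatorname{diam}(\cK)^2C\).

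That probability bound is the main step, and I would get it from \cref{lem:app-Gram} applied under each \(\Pp_s\), which requires checking its two hypotheses.

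\emph{Small-ball hypothesis.} For \(t\in I\), \(t\ge1\), the state is \(x_t=A_sx_{t-1}+B_su_{t-1}+w_t\). The shift \(A_sx_{t-1}+B_su_{t-1}\) is \(\mathcal F_{t-1}\)-measurable, and \(w_t\) is independent of \(\mathcal F_{t-1}\). Because the policy randomization is independent of the noise, \cref{lem:location-smallball} gives \(\Pp(|q^\top x_t|\ge a_0\mid\mathcal F_{t-1})\ge p_0\) for every unit \(q\), whatever the policy.

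\emph{Trace hypothesis.} This needs \(\E\tr V_I\le Cn\). By \eqref{eq:energy-prefix} with \(t=b\le T/2\), \(\E_s\tr V_I\le C_E(b+R_T(s)+1)\). Averaging over \(\lambda\) and using \(b\le2n\) and \(\E_{s\sim\lambda}R_T(s)\le n\) gives \(\E_{s\sim\lambda}\E_s\tr V_I\le Cn\).

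This is where I expect the difficulty. The trace bound holds only on average over the prior, not for each \(s\), since \(R_T(s)\) may be large at individual \(s\). To handle this, I would treat the mixture \(\int\lambda(s)\Pp_s\,ds\), with \(s\) drawn first, as a single probability space. The conditional small-ball bound still holds there after conditioning on \(s\) together with \(\mathcal F_{t-1}\). The proof of \cref{lem:app-Gram} uses the trace only through Markov's inequality, \(\Pp(\opnorm{V_I}>n^3)\le Cn^{-2}\), which needs only the averaged bound. Its Azuma--Hoeffding and net arguments work conditionally on \(s\) and are uniform in \(s\). So \cref{lem:app-Gram} applies to the mixture and gives \(\E_{s\sim\lambda}\Pp_s(\mathcal E^c)\le C_gn^{-2}\) for \(n\ge n_0\triangleq n_g\).

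The constants depend only on \(a_0\), \(p_0\), \(C_E\), \(\cK\) and the dimension, so they are independent of \(\pi\), \(\lambda\), \(I\) and \(T\), as the statement requires.
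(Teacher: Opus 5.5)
Your proposal is correct and follows the paper's proof essentially step for step. It uses the same identity \(\widetilde K_I-K_s=-M_IV_I^{-1}\) on the good Gram event, the hat-matrix contraction \(M_IV_I^{-1}M_I^\top\preceq\sum_{t\in I}e_te_t^\top\), the diameter bound of \(\cK\) off that event, and \cref{lem:app-Gram} applied under the joint prior--trajectory law with the prior-averaged trace bound from \eqref{eq:energy-prefix}. Your explicit observation that only the Markov step needs the trace bound, so the average over \(\lambda\) suffices while the small-ball and net arguments run conditionally on \(s\), usefully spells out the paper's terse ``conditioning first on the parameter.''
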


\begin{proof}
For every policy and parameter, \cref{lem:location-smallball} gives the conditional probability bound in \cref{lem:app-Gram}. The bound also holds under the joint law of the parameter and trajectory by conditioning first on the parameter. From \cref{lem:prefix-energy}, averaged over the prior,
\begin{equation*}
 \E_{s\sim\lambda}\E_s\tr(V_I)\le C(n+B_T(\lambda)+1)\le C'n.
\end{equation*}
Applying \cref{lem:app-Gram} under this joint law gives
\begin{equation}
 \E_{s\sim\lambda}\Pp_s(V_I\nsucceq c_gnI_{n_x})\le C_gn^{-2}.  \label{eq:app-blockGram}
\end{equation}

Set $e_t(s)=u_t+K_sx_t$ and $M_I=\sum_{t\in I}e_t(s)x_t^\top$. On the event $V_I\succeq c_gnI_{n_x}$, completing the square gives $M_IV_I^{-1}M_I^\top\preceq\sum_{t\in I}e_t(s)e_t(s)^\top$. The estimator in \eqref{eq:block-LS} therefore satisfies
\begin{align}
 \widetilde K_I-K_s
 &=-M_IV_I^{-1},\notag\\
 \fnorm{\widetilde K_I-K_s}^2
 &\le\frac{1}{c_gn}\tr\!\left(M_IV_I^{-1}M_I^\top\right)\notag\\
 &\le\frac{1}{c_gn}\sum_{t\in I}\norm{e_t(s)}^2,          \label{eq:app-Krisk}
\end{align}
where the first inequality uses $V_I^{-2}\preceq(c_gn)^{-1}V_I^{-1}$.
Projection onto $\cK$ cannot increase the distance to $K_s$. On the bad event, the projected squared error is bounded by $\operatorname{diam}(\cK)^2$. Averaging \eqref{eq:app-Krisk} and using \eqref{eq:app-blockGram} proves \eqref{eq:block-risk}.
\end{proof}

\section{Regret Bounds for a Known Uncertainty Radius}\label{app:radius}
\begin{proof}[Proof of \cref{thm:radius}]
Choose \(r_0\le1\) small enough for the local bounds in \cref{lem:benchmark,thm:cross-fiber,lem:left-inverse,lem:common-lyap} under their respective hypotheses. All constants may depend on \(x_0\), the fixed neighborhood and noise law, not on \(r,T\). The proof of \eqref{eq:nominal-upper-general} and smoothness give the nominal \(C(1+Tr^2)\) bound, or \(C(1+Tr^4)\) if \(DK_{\theta_0}=0\).

To prove \eqref{eq:radius-CI}, suppose CI holds. If \(\cM_{\theta_0}=\{0\}\), invariance of the optimal gain and nominal control give a uniform constant regret bound. Otherwise, use FCE with fixed projection radius \(r\) and epoch lengths
\begin{equation*}
 N_j=N_0(r)2^j,\qquad N_0(r)\triangleq\lceil4r^{-2}\rceil,
\end{equation*}
where \(N_0(r)\ge4\), and initialize \(\widehat\mu_0=0\). For \(\theta=\theta_0+\mu+\nu\in\Theta(r)\), the projection contains \(\mu\), and \(\norm{\mu},\norm{\nu},\norm{\widehat\mu_j}\le r\). The common Lyapunov bound gives uniform state second moments, so the proof for the projected OLS estimator applies at these deterministic epoch starts with constants independent of \(r,T\).

Equation \eqref{eq:nuisance-identity} gives \(\fnorm{\cZ_{K_j}\nu}\le Cr^2\). Applying the bounded left inverse, using nonexpansiveness of projection, and invoking \cref{thm:cross-fiber} gives
\begin{equation*}
 \E\norm{K_{j+1}-K_\theta}^2\le C/N_j+Cr^4,\qquad
 \norm{K_j-K_\theta}\le Cr.
\end{equation*}
The second bound is deterministic. Thus the second term on the right-hand side of \eqref{eq:epoch-loss} is at most \(Cr^2\). The initial epoch contributes at most \(C[1+\min\{T,N_0(r)\}r^2]\le C\). For every later epoch, including a final incomplete epoch, its length is at most \(2N_{j-1}\), so its statistical contribution is at most a constant. There are at most \(C[1+\log(1+Tr^2)]\) epochs. Summing their boundary terms and the residual \(Cr^4\) per step proves \eqref{eq:radius-CI}. If \(\cN_{\theta_0}=\mathcal D\), then \(A[\nu]=B[\nu]=0\) and \(K_{\theta_0+\mu+\nu}=K_{\theta_0+\mu}\). Both error terms due to the invisible component vanish, so the \(Tr^4\) term is absent.

For the lower bounds, additionally impose \cref{ass:location-noise}. The proof of \cref{lem:prefix-energy} extends \eqref{eq:residual-prefix}--\eqref{eq:energy-prefix} to \(|s|\le r_0\). On this fixed neighborhood, the gains \(K_s\) lie in a compact ball \(\cK\), while \(J_w\) and the constants in \cref{lem:location-smallball} do not depend on \(r,T\). The proofs of \eqref{eq:generic-info} and \cref{lem:block-risk} therefore apply with uniform constants.

Fix a unit \(v\) with \(G\triangleq DK_{\theta_0}[v]\ne0\), reducing \(r_0\) if necessary so that \(g(s)\triangleq\ip{G/\fnorm{G}}{K_{\theta_0+sv}}_F\) has \(g'(s)\ge\fnorm{G}/2\) on \([-r_0,r_0]\). For \(0<h\le r_0\), scale the smooth prior from Appendix~\ref{app:info} as \(\lambda_h(s)=h^{-1}\lambda_0(s/h)\). Its information is \(h^{-2}J(\lambda_0)\). Assuming \(B_T(\lambda_h)<\infty\), the energy bound gives finite averaged information and justifies the same weak integration by parts. Hence
\begin{equation}
 \E_{s\sim\lambda_h}\E_s\fnorm{\widehat K_I-K_s}^2
 \ge\frac{c}{\E_{s\sim\lambda_h}I_b^\pi(s;v)+Ch^{-2}}.          \label{eq:radius-vt}
\end{equation}

For the logarithmic lower bound \eqref{eq:radius-log}, take \(h=r\) and set \(X\triangleq Tr^2\). If \(B_T(\lambda_h)\ge\sqrt X\), then \eqref{eq:radius-log} already follows. Otherwise, choose
\begin{equation*}
 m\triangleq\lceil M\max\{r^{-2},\sqrt X\}\rceil,
 \qquad I_j=[2^jm,2^{j+1}m),
\end{equation*}
with \(j=0,1,\ldots\), retaining blocks with \(2^{j+1}m\le T/2\). Choose the fixed \(M\) large enough for the block lemma and its remainder. On every retained block of length \(n\), both \(B_T(\lambda_h)\) and \(r^{-2}\) are at most \(n\). The denominator in \eqref{eq:radius-vt} is therefore at most \(Cn\), and \eqref{eq:block-risk} gives a positive constant residual loss per block. For \(X\ge1\), since \(r\le1\),
\begin{equation*}
 \frac{T}{m}\ge c\min\{X,T/\sqrt X\}\ge c\sqrt X.
\end{equation*}
Consequently there are at least \(c\log(1+X)-C\) retained blocks. Summing their losses and applying \eqref{eq:residual-prefix} gives \(B_T(\lambda_h)\ge c\log(1+X)-C\). Bounded \(X\) is covered by the additive constant and nonnegativity of finite-horizon regret. Taking the supremum over the prior support proves \eqref{eq:radius-log}.

To prove \eqref{eq:radius-hard} when CI fails, take a unit \(v\in\ker\cZ_{\theta_0}\) with \(DK_{\theta_0}[v]\ne0\), and let \(h=\min\{r,T^{-1/4}\}\). The exact score split \eqref{eq:null-score-split}, smoothness, and the energy bounds on the fixed neighborhood give, for \(t\le T/2\),
\begin{equation*}
 I_t^\pi(s;v)\le C\bigl[R_T(s)+Th^2+1\bigr],\quad |s|\le h.
\end{equation*}
Use \(I=[n,2n)\) with \(n=\lfloor T/4\rfloor\). If \(B_T(\lambda_h)\ge n\), the desired bound is immediate for large \(T\). Otherwise combine \eqref{eq:radius-vt}, \eqref{eq:block-risk}, and \eqref{eq:residual-prefix}. Since \(Th^2\le h^{-2}\), they yield
\begin{equation}
 B_T(\lambda_h)+C\ge\frac{cT}{B_T(\lambda_h)+h^{-2}+C}.               \label{eq:radius-self-bound}
\end{equation}
Put \(q\triangleq Th^2=\min\{Tr^2,\sqrt T\}\). If \(B_T(\lambda_h)+C\ge q\), the claim follows. Otherwise \(B_T(\lambda_h)+C<q\le h^{-2}\), so \eqref{eq:radius-self-bound} gives \(B_T(\lambda_h)+C\ge cTh^2=cq\). This proves \eqref{eq:radius-hard}. All prior supports lie inside \(\Theta(r)\), and the argument applies to every causal, possibly randomized policy.
\end{proof}

\begin{proof}[Proof of \cref{cor:fixed-radius}]
Corollary~\ref{cor:physical-identifiability} gives \(\cN_{\theta_0}=\mathcal D\), so \cref{thm:radius} removes the \(Tr^4\) term. For fixed \(r>0\), its upper bound is logarithmic in \(T\), and \eqref{eq:radius-log} gives the matching lower bound under the additional conditions.
\end{proof}

\bibliographystyle{unsrt}
\bibliography{references}

\end{document}